\renewcommand{\baselinestretch}{1.2}
\newcommand{\single}{\let\CS=\@currsize\renewcommand{\baselinestretch}{1.1}\tiny\CS}
\newcommand{\singb}{\let\CS=\@currsize\renewcommand{\baselinestretch}{1}\tiny\CS}
\newcommand{\singa}{\let\CS=\@currsize\renewcommand{\baselinestretch}{1.2}\tiny\CS}
\newcommand{\oneandahalfspacing}{\let\CS=\@currsize\renewcommand{\baselinestretch}{1.5}\tiny\CS}
\newcommand{\singlespacing}{\let\CS=\@currsize\renewcommand{\baselinestretch}{1.6}\large\CS}
\newcommand{\bc}{\begin{center}}
	\newcommand{\ec}{\end{center}}
\newcommand{\be}{\begin{eqnarray}}
	\newcommand{\ee}{\end{eqnarray}}
\newcommand{\beano}{\begin{eqnarray*}}
	\newcommand{\eeano}{\end{eqnarray*}}
\newcommand{\ba}{\begin{array}}
	\newcommand{\ea}{\end{array}}
\newtheorem{cor}{Corollary}[section]
\newtheorem{theorem}{Theorem}[section]
\newtheorem{remark}[theorem]{Remark}
\begin{document}
	
	\title[ Generalized H-fold sumset and Subsequence sum]{Generalized H-fold sumset and Subsequence sum}
	
	
	\author[Mohan]{Mohan}
	\address{Department of Mathematics, Indian Institute of Technology Roorkee, Uttarakhand, 247667, India}
	\email{mohan@ma.iitr.ac.in, mohan98math@gmail.com}

	
	\author[R K Pandey]{Ram Krishna Pandey$^{\dagger}$}
	\address{Department of Mathematics, Indian Institute of Technology Roorkee, Uttarakhand, 247667, India}
	\email{ram.pandey@ma.iitr.ac.in}
	\thanks{$^{\dagger}$The corresponding author}

	\subjclass[2010]{11P70, 11B75, 11B13}
	
	
	
	\keywords{sumset,  subset sum, subsequence sum}

	\begin{abstract}
		
		Let $A$ and $H$ be  nonempty finite sets of integers and positive integers, respectively. The \textit{generalized $H$-fold sumset}, denoted by $H^{(r)}A$, is the union of the sumsets $h^{(r)}A$ for $h\in H$ where, the sumset $h^{(r)}A$ is the set of all integers that can be represented as a sum of $h$ elements from  $A$ with no summand in the representation appearing more than $r$ times. In this paper, we find the optimal lower  bound for the cardinality of $H^{(r)}A$, i.e., for $|H^{(r)}A|$ and the structure of the  underlying sets $A$ and $H$ when $|H^{(r)}A|$ is equal to the optimal lower bound in the cases  $A$ contains only positive integers and $A$ contains only nonnegative integers. This generalizes recent results of Bhanja. Furthermore, with a  particular set $H$, since $H^{(r)}A$ generalizes \textit{subsequence sum} and hence \textit{subset sum},  we get several results of subsequence sums and subset sums as special cases.
		
	\end{abstract}
	
	\maketitle
	

	\setcounter{page}{1}
	\noindent \section{Introduction}
	
	Let $\mathbb{N}$ be the set of positive integers. Let $A=\lbrace a_{1}, \ldots, a_{k}\rbrace$ be a nonempty finite set of integers and $h$ be a positive integer. The {\it h-fold sumset}, denoted by $hA$, and the {\it restricted h-fold sumset}, denoted by $h^{\wedge}A$ of $A$, are defined, respectively, by
	
	\[hA:=\left\lbrace \sum_{i=1}^{k} \lambda_{i} a_{i}: \lambda_{i} \in \mathbb{N} \cup \left\lbrace 0\right\rbrace \ \text{for} \ i=  1, \ldots, k \ \text{with} \  \sum_{i=1}^{k} \lambda_{i}=h\right\rbrace,
	\]

	\[h^{\wedge}A:=\left\lbrace \sum_{i = 1}^{k} \lambda_{i} a_{i}: \lambda_{i} \in \left\lbrace 0, 1\right\rbrace \ \text{for} \ i=  1, \ldots, k \ \text{with} \  \sum_{i=1}^{k} \lambda_{i}=h\right\rbrace.\]
	
	Mistri and Pandey \cite{MISTRIPANDEY2014} generalized   $hA$ and  $h^{\wedge}A$, into  the   generalized $h$-fold  sumset,  denoted by $h^{\left( r\right) }A$, as follows:
	
	 Let $r$ be a positive integer such that $1 \leq r \leq h$. The  {\it generalized h-fold  sumset} $h^{\left( r\right) }A$, is defined by  \[h^{(r)}A:=\left\lbrace \sum_{i=1}^{k} \lambda_{i} a_{i} : 0 \leq \lambda_{i} \leq r \ \text{for} \ i=1, \ldots, k \ \text{with} \sum_{i=1}^{k} \lambda_{i}=h\right\rbrace.\]
	So, the generalized $h$-fold  sumset $h^{(r)}A$ is the set of all sums of  $h$ elements of $A$, in which every summand can repeat at most $r$ times. Therefore, $hA$ and $h^{\wedge}A$ are particular cases of $h^{( r)}A$ for $r=h$ and $r=1$, respectively.
	
	For a finite set  $H$ of positive integers, Bajnok \cite{BAJNOK2018}  introduced the sumset \[HA := \bigcup_{h \in H} hA,\] and the restricted sumset \[H^{\wedge}A := \bigcup_{h \in H} h^{\wedge}A.\]
	In a recent article, Bhanja and Pandey \cite{JBHANJA2021} considered a generalization of $HA$ and $H^{\wedge}A$, the {\it generalized H-fold sumset}, denoted by $H^{(r)}A$, defined by \[H^{(r)}A:=\bigcup_{h \in H}h^{(r)}A.\] Observed that, if $r\geq\max(H)$, then $H^{(r)}A=HA$ and if $r=1$, then $H^{(r)}A=H^{\wedge}A$. The sumset $H^{(r)}A$ becomes more  important as it also generalizes \textit{subset sums} and \textit{subsequence sums}.
	\noindent\subsection{\textit{Subset sum and Subsequence sum}} Let $A$ be a finite set of integers. The sum of all the elements of a given  subset $B$ of $A$ is called  \textit{subset sum} and it is denoted by $s(B)$. That is, \[s(B)=\sum_{b \in B}b.\] The set of all nonempty subset sum of $A$, denoted by $\sum(A)$, that is \[\sum(A)=\Big\{s(B): \emptyset \not= B\subseteq A\Big\}. \] Also we define, for $1 \leq \alpha \leq k$ \[\sum_{\alpha}(A)=\Big\{s(B): \emptyset \not= B\subseteq A \ \text{and} \ \left|B\right| \geq \alpha\Big\}.\]
	Similarly, we  define subsequence sum of a given sequence of integers.
	Let $A=\{a_{1}, a_{2}, \ldots,a_{k}\}$ be a set of $k$ integers and $r$ be a positive integer, with $a_{1} < a_{2} < \cdots < a_{k}$. Then we define a   sequence associated with $A$ as \[\mathbb{A}=(\underbrace{a_{1},\ldots,a_{1}}_{r-times},\underbrace{a_{2},\ldots,a_{2}}_{r-times},\ldots,\underbrace{a_{k},\ldots,a_{k}}_{r-times})=(a_{1},a_{2},\ldots,a_{k})_{r} (\text{say}).\]
	Let $\mathbb{B}$ be a subsequence of $\mathbb{A}$. Then \[\mathbb{B}=(\underbrace{a_{1},\ldots,a_{1}}_{r_{1}-times},\underbrace{a_{2},\ldots,a_{2}}_{r_{2}-times},\ldots,\underbrace{a_{k},\ldots,a_{k}}_{r_{k}-times}) \ \text{with} \ 0\leq r_{i}\leq r.\]
	Given any  subsequence $\mathbb{B}$ of $\mathbb{A}$, the sum of all terms of the subsequence $\mathbb{B}$ is called the \textit{subsequence sum}, is denoted by $s(\mathbb{B})$ and we write \[s(\mathbb{B})=\sum_{b \in \mathbb{B}}b.\]
	The set of all subsequence sums of a given sequence $\mathbb{A}$ is the set
	\[ \sum(\mathbb{A}) =  \left\lbrace s(\mathbb{B}): \text{ $\mathbb{B}$ is subsequence of $\mathbb{A}$ of length $\geq 1$}\right\rbrace.  \]
	For $1\leq \alpha\leq kr$,  define
	\[ \sum_{\alpha}(\mathbb{A}) =  \left\lbrace s(\mathbb{B}) : \text{ $\mathbb{B}$ is subsequence of $\mathbb{A}$ of length $\geq $} \ \alpha\right\rbrace.  \]
	
	Note that, we can write \[h^{(r)}A=\left\lbrace s(\mathbb{B}): \text{$\mathbb{B}$ is subsequence $\mathbb{A}$ of length $h$}\right\rbrace.\]
	With suitable sets $H$, we can express $\sum(A), \sum\limits_{\alpha}(A),\sum(\mathbb{A})$ and $\sum\limits_{\alpha}(\mathbb{A})$ in terms of $H^{\wedge}A$ and $H^{(r)}A$, as follows:\\
	$\clubsuit$ If $H=\{1,2,\ldots,k\}$, then $H^{\wedge}A = \bigcup_{h=1}^{k} h^{\wedge}A=\sum(A)$. \\
	$\clubsuit$ If $H= \{\alpha,\alpha+1,\ldots,k\}$,  then $H^{\wedge}A = \bigcup_{h=\alpha}^{k} h^{\wedge}A=\sum\limits_{\alpha}(A)$. \\	
	$\clubsuit$ If $H=\{1,2,\ldots,kr\}$, then $H^{(r)}A=\bigcup_{h=1}^{kr} h^{(r)}A=\sum(\mathbb{A})$.\\
	$\clubsuit$ If $H=\{\alpha,\alpha+1,\ldots,kr\}$, then $H^{(r)}A=\bigcup_{h=\alpha}^{kr} h^{(r)}A=\sum\limits_{\alpha}(\mathbb{A})$.
	
	Let $A=\{a_{1},a_{2},\ldots,a_{k}\}$ be a nonempty set of integers with $a_{1}<a_{2}<\cdots<a_{k}$. For an integer $c$, we write  $c \ast A = \{ ca :  a \in A \}$ and for integers $ a$ and   $b$ with $a < b$, we write $[a,b] = \{ a, a+1, \ldots, b \}$. For a nonempty set $S = \{ s_{1}, s_{2} , \ldots, s_{n-1}, s_{n} \} $, we let $\max(S), \min(S), \max_{-}(S), \min_{+}(S)$ be the largest,  smallest,  second largest  and   second smallest elements of $S$, respectively.  For a given real number $x$, $\lfloor x \rfloor$ and $\lceil x \rceil$ denote, floor function and ceiling function of $x$, respectively. We assume $\sum_{i=1}^{t} f(i) = 0$ if $t < 1$.
	
	Two standard problems associated with a sumset in additive number theory are to find best possible lower bound for  the cardinality of sumset when the set $A$ is known (called the direct problem) and to find the structure of the underlying set $A$ when the size of the sumset attains its lower bound (called the inverse problem). These two types of problems have been solved for the sumsets in  various types of groups.  We have several classical results on sumsets for  the case when $A$ is a subset of group of integers, (see \cite{BAJNOK2018},\cite{JAGBHANJA2021},\cite{MISTRIPANDEY2014}, \cite{MONO2015}, \cite{NATHAN1995},\cite{NATHAN1996},  \cite{YCHEN2015}),  and for subsequence sums and subset one may refer to \cite{BHANJA2021}, \cite{BHANJA2020}, \cite{JBHANJA2021},\cite{MISTRIPANDEYPRAKASH2015}. We mention now, some of these results that are applied in this paper.
	
	\begin{theorem}\textup{\cite[Theorem 1.3, Theorem 1.6]{NATHAN1996}}\label{Nathanson Theorem I}
		Let $h\geq 1$, and let $A$ be a nonempty finite set of integers. Then
		\begin{center}
			$\left| hA\right| \geq h\left| A\right|  - h + 1$.
		\end{center}
		This lower bound is best possible. Furthermore, if   $\left|hA\right|$ attains this lower bound with $h \geq 2 $, then $A$ is an arithmetic progression.
	\end{theorem}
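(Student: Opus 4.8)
The plan is to treat the three assertions in order: the lower bound, its sharpness, and the inverse statement. Write $A=\{a_{1},a_{2},\ldots,a_{k}\}$ with $a_{1}<a_{2}<\cdots<a_{k}$, so $k=|A|$. For the lower bound I would exhibit inside $hA$ the explicit ``staircase''
\[ ha_{1}\;<\;(h-1)a_{1}+a_{2}\;<\;(h-2)a_{1}+2a_{2}\;<\;\cdots\;<\;ha_{2}\;<\;(h-1)a_{2}+a_{3}\;<\;\cdots\;<\;ha_{k}, \]
that is, the set $S=\{(h-j)a_{i}+ja_{i+1}:1\le i\le k-1,\ 0\le j\le h\}$. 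Within the $i$-th block consecutive terms differ by $a_{i+1}-a_{i}>0$, and the top of the $i$-th block ($j=h$) equals the bottom of the $(i+1)$-th block ($j=0$), so $S$ is a strictly increasing chain with exactly $h(k-1)+1=h|A|-h+1$ elements, each a sum of $h$ elements of $A$ with repetitions; hence $|hA|\ge|S|=h|A|-h+1$. Sharpness follows by taking $A$ to be any $k$-term arithmetic progression $\{a,a+d,\ldots,a+(k-1)d\}$: then $hA=\{ha+jd:0\le j\le h(k-1)\}$ has exactly $h(k-1)+1$ elements.

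For the inverse statement, assume $h\ge2$ and $|hA|=h|A|-h+1$. Since $S\subseteq hA$ and $|S|=h|A|-h+1=|hA|$, we must have $hA=S$. If $k\le2$ then $A$ is trivially an arithmetic progression, so assume $k\ge3$ and put $c_{i}=a_{i+1}-a_{i}$. For each $i$ with $2\le i\le k-1$, consider the test element $v_{i}=(h-2)a_{i}+a_{i-1}+a_{i+1}$: it is a sum of $h$ elements of $A$ (legitimate because $h\ge2$), hence $v_{i}\in hA$, and $v_{i}=ha_{i}+c_{i}-c_{i-1}$. One checks $ha_{i-1}<v_{i}<ha_{i+1}$, so $v_{i}$ lies in the window spanned by blocks $i-1$ and $i$ of the staircase, where the elements of $S$ immediately adjacent to $ha_{i}$ are $ha_{i}-c_{i-1}$ below and $ha_{i}+c_{i}$ above. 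If $c_{i}>c_{i-1}$ then $v_{i}$ falls strictly between $ha_{i}$ and $ha_{i}+c_{i}$; if $c_{i}<c_{i-1}$ then $v_{i}$ falls strictly between $ha_{i}-c_{i-1}$ and $ha_{i}$; in either case $v_{i}\notin S$, contradicting $hA=S$. Hence $c_{i}=c_{i-1}$ for all such $i$, so all consecutive differences of $A$ coincide and $A$ is an arithmetic progression.

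The lower bound and sharpness are immediate; the substance is the inverse part, and within it the crux is choosing the test element $v_{i}=(h-2)a_{i}+a_{i-1}+a_{i+1}$ and checking that it cannot land on any staircase point unless $c_{i}=c_{i-1}$ — this is precisely where the hypothesis $h\ge2$ and the exact positions of the staircase elements around $ha_{i}$ enter. (An alternative route to the inverse part is induction on $k$, peeling off $a_{k}$ via $|hA|\ge|h(A\setminus\{a_{k}\})|+h$ and analysing the equality case; I expect the direct staircase argument above to be the cleaner one.)
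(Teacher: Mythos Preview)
Your argument is correct. Note, however, that the paper does not supply its own proof of this statement: it is quoted as a known result from Nathanson's monograph \cite{NATHAN1996} and used as a black box in the proofs of the paper's main theorems. So there is no ``paper's proof'' to compare against here.

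For what it is worth, your approach coincides with the standard one in \cite{NATHAN1996}: the staircase $S$ gives the lower bound, and for the inverse part one forces $hA=S$ and then tests membership of auxiliary sums. Your particular test element $v_i=(h-2)a_i+a_{i-1}+a_{i+1}=ha_i+(c_i-c_{i-1})$ is clean; the verification that it falls strictly between the consecutive staircase points $ha_i$ and $ha_i\pm c_{i\mp 0}$ (depending on the sign of $c_i-c_{i-1}$) is exactly right, and the bounds $ha_{i-1}<v_i<ha_{i+1}$ you state (equivalently $(h-1)c_{i-1}+c_i>0$ and $(h-1)c_i+c_{i-1}>0$) confirm that $v_i$ cannot accidentally land in a distant block. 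The alternative induction-on-$k$ route you mention is also the one Nathanson uses for some variants; either works here.
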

	
	\begin{theorem} \textup{\cite[Theorem 1, Theorem  2]{NATHAN1995, NATHAN1996}}\label{Nathanson Theorem II}
		Let A be a nonempty finite set of integers, and let $1 \leq h \leq \left| A\right| $. Then
		\begin{center}
			$\left| h^{\wedge}A \right|  \geq h \left| A\right|  - h^{2} + 1$.
		\end{center}
		This lower bound is best possible. Furthermore, if   $\left|h^{\wedge}A\right|$ attains this lower bound with $\left|A \right| \geq 5$ and  $2 \leq h \leq |A| - 2$, then $A$ is an arithmetic progression.
	\end{theorem}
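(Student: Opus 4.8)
For the lower bound, write $A=\{a_{1}<a_{2}<\cdots<a_{k}\}$ with $k=|A|$ and exhibit a long strictly increasing chain of $h$-element subset sums inside $h^{\wedge}A$. Start from $B_{0}=\{a_{1},\dots,a_{h}\}$ and repeatedly replace one summand by the next unused element of $A$: in phase $t$ $(t=1,\dots,h)$ push $a_{h-t+1}$ upward through $a_{h-t+2},a_{h-t+3},\dots,a_{k-t+1}$ while $a_{k-t+2},\dots,a_{k}$ remain fixed on top. Each single replacement strictly increases the sum, each phase consists of $k-h$ replacements, and all sets encountered are genuine $h$-subsets; hence the chain contributes $h(k-h)+1=h|A|-h^{2}+1$ distinct elements of $h^{\wedge}A$. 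To see the bound is sharp, let $A$ be an arithmetic progression $\{a+id:0\le i\le k-1\}$; then any sum of $h$ of the $a_{i}$ equals $ha+d\sum_{j}i_{j}$, and the attainable index-sums $\sum_{j}i_{j}$ (over $0\le i_{1}<\cdots<i_{h}\le k-1$) form exactly the integer interval $\bigl[\binom{h}{2},\binom{h}{2}+h(k-h)\bigr]$, so $|h^{\wedge}A|=h(k-h)+1$.

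For the inverse statement, normalise first: translating $A$ shifts $h^{\wedge}A$ by a constant and dilating $A$ scales $h^{\wedge}A$, both preserving cardinality and ``being an arithmetic progression'', so we may assume $0=a_{1}<a_{2}<\cdots<a_{k}$ with $\gcd(a_{2},\dots,a_{k})=1$ and aim to prove $a_{i}=i-1$. The complementation identity $h^{\wedge}A=\sigma(A)-(k-h)^{\wedge}A$, where $\sigma(A)=a_{1}+\cdots+a_{k}$, gives $|h^{\wedge}A|=|(k-h)^{\wedge}A|$ and refers to the same set $A$, so we may further assume $2\le h\le k/2$. The engine is the peeling identity
\[
h^{\wedge}A \;=\; h^{\wedge}B \;\cup\; \bigl(a_{k}+(h-1)^{\wedge}B\bigr),\qquad B:=A\setminus\{a_{k}\},
\]
together with its mirror obtained by removing $a_{1}$. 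Applying the already-proved lower bound to $h^{\wedge}B$, to $(h-1)^{\wedge}B$, and to the size of their intersection, one shows that $|h^{\wedge}A|=h|A|-h^{2}+1$ is possible only when $|h^{\wedge}B|$ and $|(h-1)^{\wedge}B|$ simultaneously attain their own optimal bounds; induction on $|A|$ then makes $B$ an arithmetic progression, and comparing the two peelings pins down the last point $a_{k}$ and the common difference. The base case $h=2$ is done by hand: the chain above already lists the $2k-3$ sums $a_{1}+a_{2}<\cdots<a_{1}+a_{k}<a_{2}+a_{k}<\cdots<a_{k-1}+a_{k}$, so equality forces every sum $a_{i}+a_{j}$ into this list; locating $a_{2}+a_{k-1}$ in it forces $a_{2}-a_{1}=a_{k}-a_{k-1}$, and iterating (or re-using the peeling with $h=2$) yields the progression.

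The main obstacle is the overlap estimate in the peeling step: one must bound $\bigl|\bigl(a_{k}+(h-1)^{\wedge}B\bigr)\setminus h^{\wedge}B\bigr|$ below by the ``right'' quantity using only that $h^{\wedge}B$ and $(h-1)^{\wedge}B$ lie in explicitly known intervals, and then argue that equality forces both pieces to be extremal and to overlap in precisely one forced block of consecutive values. Equally fiddly is tracking the hypotheses under peeling: removing $a_{k}$ decreases $|A|$ by $1$ (and, in the second piece, $h$ by $1$ as well), so the constraints $|A|\ge 5$ and $2\le h\le|A|-2$ must be monitored, and the finitely many genuinely small configurations (roughly $|A|\in\{5,6\}$ with $h$ near $|A|/2$) verified directly.
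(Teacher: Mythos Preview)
This theorem is stated in the paper as a cited result of Nathanson and is \emph{not} proved there; the paper invokes it only as a black box. So there is no proof in the paper to compare your attempt against.

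Evaluating your proposal on its own merits: the lower-bound chain and the sharpness example are correct and are essentially Nathanson's original argument. For the inverse part, your reductions (translation, dilation, complementation $h\leftrightarrow k-h$) are fine, and the peeling identity $h^{\wedge}A=h^{\wedge}B\cup\bigl(a_{k}+(h-1)^{\wedge}B\bigr)$ with $B=A\setminus\{a_{k}\}$ is valid. But the step you yourself flag as ``the main obstacle'' is a genuine gap, not a routine detail: from $|h^{\wedge}A|=h(k-h)+1$ you cannot conclude that $|h^{\wedge}B|$ attains its own minimum $h(k-1-h)+1$, because inclusion--exclusion requires an \emph{upper} bound on the overlap $h^{\wedge}B\cap\bigl(a_{k}+(h-1)^{\wedge}B\bigr)$, whereas the direct theorem supplies only lower bounds on the two pieces. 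All you get for free is $h(k-1-h)+1\le |h^{\wedge}B|\le h(k-h)+1$, a window of width $h$, so the induction does not launch. A route that does work, and is closer to Nathanson's, is to observe that under equality $h^{\wedge}A$ coincides with the chain set itself, then insert specific off-chain sums between consecutive chain elements and read off the forced coincidences to obtain $a_{i+1}-a_{i}=a_{j+1}-a_{j}$ directly, exactly in the spirit of your $h=2$ sketch. As written, the inverse half is an outline with the decisive estimate left open.
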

	Mistri and Pandey \cite{MISTRIPANDEY2014}  generalized  above results as follows:
	\begin{theorem} \textup{\cite[Theorem 2.1]{MISTRIPANDEY2014}}\label{MistriPandey Theorem I}
		Let A be a nonempty finite set of k integers. Let r and h be  integers such that $1 \leq r \leq h \leq kr$. Set $m= \left \lfloor{h/r}\right \rfloor$. Then
		\begin{center}
			$\left|h^{\left( r\right)}A \right| \geq mr\left( k-m\right)  + \left( h-mr\right) \left( k-2m-1\right)  +1$.
		\end{center}
		This lower bound is best possible.
	\end{theorem}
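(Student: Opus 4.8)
The plan is to exhibit an explicit strictly increasing chain of $M+1$ elements of $h^{(r)}A$, where $M:=mr(k-m)+(h-mr)(k-2m-1)$, and then to verify that the arithmetic progression $A=[0,k-1]$ attains this value. Write $A=\{a_{1}<a_{2}<\cdots<a_{k}\}$ and put $s:=h-mr$, so that $0\le s\le r-1$ and $1\le m\le k$. A representation $\sum_{i=1}^{k}\lambda_{i}a_{i}$ will be recorded by its coefficient vector $\lambda=(\lambda_{1},\ldots,\lambda_{k})$ with $0\le\lambda_{i}\le r$ and $\sum_{i}\lambda_{i}=h$, viewed as a placement of $h$ balls into $k$ boxes of capacity $r$; to each such $\lambda$ attach the bookkeeping quantity $w(\lambda):=\sum_{i=1}^{k}i\lambda_{i}$, which is not to be confused with the actual value $\sum_{i}\lambda_{i}a_{i}$ and is used only to count steps.

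The elementary operation will be the following \emph{move}: pick an index $i$ with $\lambda_{i}\ge 1$ and $\lambda_{i+1}\le r-1$, decrease $\lambda_{i}$ by $1$ and increase $\lambda_{i+1}$ by $1$; this raises $w(\lambda)$ by exactly $1$ and raises the value by $a_{i+1}-a_{i}\ge 1$. Let $C_{0}$ be the placement $(\,\underbrace{r,\ldots,r}_{m},\,s,\,0,\ldots,0\,)$ and $C^{*}$ the placement $(\,0,\ldots,0,\,s,\,\underbrace{r,\ldots,r}_{m}\,)$, the $s$ sitting in box $k-m$; both are legitimate placements (here $0\le s\le r-1$ forces $m\le k$, and forces $m+1\le k$ when $s\ge1$), and one checks directly that they are the placements of least and greatest $w$. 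The combinatorial heart of the argument, which I would establish first, is the claim that \emph{every} placement $\lambda\ne C^{*}$ admits at least one move: otherwise, for each $i<k$ one would have $\lambda_{i}=0$ or $\lambda_{i+1}=r$, and taking $i_{0}$ minimal with $\lambda_{i_{0}}>0$ forces $\lambda_{j}=0$ for $j<i_{0}$ and $\lambda_{j}=r$ for $j>i_{0}$; then $\lambda_{i_{0}}+(k-i_{0})r=h=mr+s$ together with $0\le\lambda_{i_{0}}\le r$ and $0\le s<r$ pins $\lambda$ down to $C^{*}$, a contradiction.

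Granting this, one starts at $C_{0}$ and applies moves greedily: $w$ strictly increases, is bounded above by $w(C^{*})$, and increases by exactly $1$ each time, so after exactly $w(C^{*})-w(C_{0})$ moves one reaches $C^{*}$. A routine computation, using $\binom{m}{2}+\binom{m+1}{2}=m^{2}$, gives
\[
w(C^{*})-w(C_{0})=r(mk-m^{2})+s(k-2m-1)=mr(k-m)+(h-mr)(k-2m-1)=M .
\]
Since each move strictly increases $\sum_{i}\lambda_{i}a_{i}$, the values of the placements visited along the chain form a strictly increasing sequence $c_{0}<c_{1}<\cdots<c_{M}$ of elements of $h^{(r)}A$, so $|h^{(r)}A|\ge M+1$, which is precisely the asserted bound. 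For sharpness, take $A=[0,k-1]$: then $a_{i}=i-1$, the value of a placement $\lambda$ equals $w(\lambda)-h$, and by the chain just built this value runs over every integer in $[\,w(C_{0})-h,\,w(C^{*})-h\,]$ and, trivially, over no others, so $|h^{(r)}[0,k-1]|=w(C^{*})-w(C_{0})+1=M+1$.

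I expect the only genuine obstacle to be keeping every intermediate placement inside the capacity constraint $0\le\lambda_{i}\le r$ while still extracting the full length $M$ from the chain — that is, ensuring a legal move is always available en route to $C^{*}$ — which is exactly what the claim "every non-maximal placement has a move" delivers; once it is in place the remainder is bookkeeping. The degenerate cases cause no difficulty: if $m=k$ then $h=kr$, $s=0$, $M=0$, and $h^{(r)}A$ is a single point, while the case $s=0$ is covered by the argument above without change.
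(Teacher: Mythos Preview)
Your argument is correct. Note, however, that the present paper does not prove this theorem: it is quoted from \cite{MISTRIPANDEY2014} and used as a black box in the proof of Theorem~\ref{Direct Theorem}, so there is no in-paper proof to compare against.

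That said, your route is different in flavour from the explicit-chain arguments the paper deploys elsewhere (and from the original Mistri--Pandey proof). Those arguments write down, by hand, a specific list of $M+1$ sums and verify the strict inequalities case by case; this paper's proof of Theorem~\ref{Direct Theorem} is built entirely out of such hand-made chains ($T_{i,j}^{q}$, $U_{i,j}^{q}$, etc.). You instead introduce the potential $w(\lambda)=\sum_i i\lambda_i$, observe that the elementary ``shift one ball one box to the right'' move raises $w$ by exactly $1$ and the sum $\sum_i\lambda_i a_i$ by at least $1$, and prove the single structural lemma that the only placement admitting no move is $C^{*}$. The chain then exists because the process cannot stall, and its length is read off as $w(C^{*})-w(C_{0})=M$ with no case analysis. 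The trade-off is that the explicit-list method names the extremal elements, which is handy for the inverse problem; your potential method gives existence more cheaply but anonymously. Your verification of the boundary cases ($s=0$, $m=k$) and of sharpness via $A=[0,k-1]$ is fine; for the latter you implicitly use that $C_0$ and $C^{*}$ realise the extreme values of $w$, which you noted.
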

	
	\begin{theorem}\textup{\cite[Theorem 3.1, Theorem 3.2]{MISTRIPANDEY2014}}\label{MistriPandey Theorem II}
		Let $k \geq 3$. Let $r$ and $h \geq 2$ be  integers such
		that $1 \leq r \leq h \leq kr -2$ and $\left( k, h, r\right)  \neq \left( 4, 2, 1\right)$. Set $m= \left \lfloor{h/r}\right \rfloor$. If A is a finite set of k integers such that
		\begin{center}
			$\left|h^{\left( r\right)}A \right| = mr\left( k-m\right)  + \left( h-mr\right) \left( k-2m-1\right)  +1$,
		\end{center}
		then $A$ is an arithmetic progression.
	\end{theorem}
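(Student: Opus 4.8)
The plan is to normalize $A$, build an explicit coefficient‑vector chain realizing the bound of Theorem~\ref{MistriPandey Theorem I}, and then, under the equality hypothesis, exchange two consecutive moves along such a chain to force all successive differences of $A$ to be equal; the case $r=1$ is split off and handed to Theorem~\ref{Nathanson Theorem II}. Since $h^{(r)}(A+t)=h^{(r)}A+ht$ and $h^{(r)}(c\ast A)=c\ast h^{(r)}A$, I may assume $a_{1}=0$ and $\gcd(a_{2},\dots,a_{k})=1$, so that it suffices to prove $A=[0,k-1]$. Write $h=mr+s$ with $m=\lfloor h/r\rfloor$ and $0\le s\le r-1$; the hypotheses $1\le r\le h\le kr-2$ give $1\le m\le k-1$. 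Encode a representation of an element of $h^{(r)}A$ by its coefficient vector $\lambda=(\lambda_{1},\dots,\lambda_{k})$ with $0\le\lambda_{i}\le r$ and $\sum_{i}\lambda_{i}=h$, and put $s(\lambda)=\sum_{i}\lambda_{i}a_{i}$. A \emph{forward move} pushes one unit from a coordinate $p$ to coordinate $p+1$; it is admissible at $\lambda$ exactly when $\lambda_{p}\ge1$ and $\lambda_{p+1}\le r-1$, it raises $\Phi(\lambda):=\sum_{i}i\lambda_{i}$ by $1$, and it raises $s(\lambda)$ by $a_{p+1}-a_{p}>0$.

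Let $\lambda^{\min}=(r,\dots,r,s,0,\dots,0)$ (with $r$ in the first $m$ coordinates and $s$ in coordinate $m+1$), and let $\lambda^{\max}$ be its reversal. A standard dominance argument shows that every admissible $\lambda$ can be reached from $\lambda^{\min}$ by forward moves, and in turn reaches $\lambda^{\max}$ by forward moves. Since each forward move adds $1$ to $\Phi$, every maximal forward path from $\lambda^{\min}$ to $\lambda^{\max}$ has exactly $\Phi(\lambda^{\max})-\Phi(\lambda^{\min})=mr(k-m)+(h-mr)(k-2m-1)$ steps; and since each forward move strictly increases $s$, the $N:=mr(k-m)+(h-mr)(k-2m-1)+1$ vertices of such a path carry $N$ distinct values of $h^{(r)}A$, in increasing order. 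This re‑proves Theorem~\ref{MistriPandey Theorem I}, and when $|h^{(r)}A|=N$ it shows that \emph{every} maximal forward path traverses all of $h^{(r)}A$ in increasing order.

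Assume first $r\ge2$, and fix $i\in\{1,\dots,k-2\}$. Giving coordinates $i,i+1,i+2$ the base values $1,1,0$ leaves usable room $(r-1)+(r-2)+(r-1)$ on them (coordinate $i+1$ being capped at $r-1$) and room $r$ on each of the remaining $k-3$ coordinates, for a total $kr-4\ge h-2$ \emph{exactly because} $h\le kr-2$; hence there is an admissible $\lambda$ with $\lambda_{i}\ge1$, $1\le\lambda_{i+1}\le r-1$, $\lambda_{i+2}\le r-1$. By the dominance fact, some maximal forward path $\mathcal{P}$ performs, consecutively at $\lambda$, the move $A=(i\to i+1)$ and then $B=(i+1\to i+2)$ (here $N\ge3$, a short check under the hypotheses). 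Since $\lambda_{i+1}\ge1$ and $\lambda_{i+2}\le r-1$, replacing this pair by $B'=(i+1\to i+2)$ then $A'=(i\to i+1)$ is again admissible at $\lambda$ and reaches the very same vector (both amount to moving one unit from $i$ to $i+2$), yielding a maximal forward path $\mathcal{P}'$ that agrees with $\mathcal{P}$ up to $\lambda$. Both $\mathcal{P}$ and $\mathcal{P}'$ traverse $h^{(r)}A$ in increasing order and agree through $\lambda$, so their next values coincide: $s(\lambda)+(a_{i+1}-a_{i})=s(\lambda)+(a_{i+2}-a_{i+1})$, i.e. $a_{i+1}-a_{i}=a_{i+2}-a_{i+1}$. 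Letting $i$ range over $1,\dots,k-2$ gives $a_{2}-a_{1}=\cdots=a_{k}-a_{k-1}$, so $A$ is an arithmetic progression; with $a_{1}=0$ and $\gcd(a_{2},\dots,a_{k})=1$ the common difference is $1$, so $A=[0,k-1]$. Finally, if $r=1$ then $m=h$ and $N=hk-h^{2}+1$, so the claim is precisely Theorem~\ref{Nathanson Theorem II}: the range $2\le h\le k-2$ is empty when $k=3$, forces the excluded triple $(4,2,1)$ when $k=4$, and for $k\ge5$ Theorem~\ref{Nathanson Theorem II} applies directly.

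The heart of the argument is the exchange step, and the delicate points are: (i) verifying that the reversed pair $B',A'$ is admissible at exactly the configurations produced by the capacity count --- this is where $r\ge2$ and $h\le kr-2$ enter, and where the count leaves no room precisely when $r=1$ or $h>kr-2$, isolating the exception $(4,2,1)$; and (ii) stating the dominance/reachability fact for forward moves cleanly enough that ``every maximal forward path traverses $h^{(r)}A$ in increasing order'' is legitimate. The lower bound and the small-case bookkeeping are routine by comparison.
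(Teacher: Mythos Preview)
The paper does not prove this statement: Theorem~\ref{MistriPandey Theorem II} is quoted from \cite{MISTRIPANDEY2014} and used as a black box throughout, so there is no in-paper proof to compare your attempt against.

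That said, your argument is correct and self-contained. The normalization is standard; the lattice-path model with forward moves $(p\to p+1)$ and the potential $\Phi(\lambda)=\sum_i i\lambda_i$ cleanly reproduces the count $N=mr(k-m)+(h-mr)(k-2m-1)+1$ of Theorem~\ref{MistriPandey Theorem I}; and the key observation---that under the equality hypothesis \emph{every} maximal forward path realizes the full sorted list of $h^{(r)}A$, so two paths sharing a prefix must agree on the next value---is exactly what makes the exchange step work. Your capacity count for the existence of an admissible $\lambda$ with $\lambda_i\ge1$, $1\le\lambda_{i+1}\le r-1$, $\lambda_{i+2}\le r-1$ is correct (the feasible sums form the full interval $[2,kr-2]$, which is precisely where the hypothesis $h\le kr-2$ enters and why $r\ge2$ is needed), and the admissibility of both orderings $AB$ and $B'A'$ at such a $\lambda$ is verified by the constraints you impose. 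The reduction of the case $r=1$ to Theorem~\ref{Nathanson Theorem II}, together with the observation that the range $2\le h\le k-2$ is empty for $k=3$ and forces the excluded triple $(4,2,1)$ for $k=4$, is also correct.

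Compared with the original proof in \cite{MISTRIPANDEY2014}, which proceeds by constructing explicit increasing chains of elements of $h^{(r)}A$ and squeezing specific extra elements between consecutive chain members to derive the equal-difference relations case by case, your exchange argument is more uniform: a single local swap at a generic interior vertex handles every index $i$ at once, and the role of the hypotheses $r\ge2$ and $h\le kr-2$ is isolated in one capacity inequality rather than scattered across cases. The price is that the dominance/reachability claim (every admissible $\lambda$ lies on some maximal forward path, and one may prescribe the next two moves whenever they are admissible) has to be stated and believed; you flag this yourself as point~(ii), and it is routine but should be spelled out in a final write-up.
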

	Further generalization  of $h^{(r)}A$ was considered in \cite{MISTRIPANDEY2014} for which the direct and inverse results were proved by  Yang and Chen \cite{YCHEN2015}. Direct results for  $h^{(r)}A$ when $A$ is a subset of the group of residual classes modulo a prime and $A$ is a subset of a finite cyclic group were given, respectively, by Monopoli \cite{MONO2015} and Bhanja \cite{JAGBHANJA2021}.
	
	The direct and inverse theorems for the sumsets $HA$ and $H^{\wedge}A$  proved by  Bhanja \cite{BHANJA2021} are the following:
	
	\begin{theorem}\textup{\cite[Theorem 3]{BHANJA2021}}\label{Bhanja I}
		Let A be a set of k positive integers. Let H be a set of t positive integers  with $\max(H) = h_{t}$. Then
		\[\left|HA\right| \geq h_{t}(k-1) + t.\]
		This lower bound is optimal.
	\end{theorem}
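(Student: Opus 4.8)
The plan is to pin down the contribution of the largest dilate $h_tA$ using the classical single-dilate bound, and then to recover the remaining $t-1$ units from the smaller members of $H$ by locating $t-1$ sums that lie strictly below $h_tA$. Throughout, write $H=\{h_1,\dots,h_t\}$ with $h_1<\cdots<h_t$ and $A=\{a_1,\dots,a_k\}$ with $0<a_1<\cdots<a_k$; the positivity of the elements of $A$ is the feature that drives the argument.

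First I would apply Theorem~\ref{Nathanson Theorem I} to $h=h_t$ and the $k$-element set $A$, obtaining $|h_tA|\ge h_t(k-1)+1$; since $h_tA\subseteq HA$, it then remains to exhibit $t-1$ elements of $HA$ that are not in $h_tA$. The key observation is that $\min(hA)=ha_1$ for every positive integer $h$: any representation $\sum_i\lambda_ia_i$ with $\lambda_i\ge 0$ and $\sum_i\lambda_i=h$ satisfies $\sum_i\lambda_ia_i\ge a_1\sum_i\lambda_i=ha_1$. Because $a_1\ge 1$, the integers $h_1a_1<h_2a_1<\cdots<h_{t-1}a_1<h_ta_1$ are pairwise distinct; each $h_ia_1$ lies in $h_iA\subseteq HA$; and for $i\le t-1$ we have $h_ia_1<h_ta_1=\min(h_tA)$, so $h_ia_1\notin h_tA$. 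Consequently
\[
|HA|\ \ge\ |h_tA|+(t-1)\ \ge\ \bigl(h_t(k-1)+1\bigr)+(t-1)\ =\ h_t(k-1)+t .
\]

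For the optimality claim I would first note that $h_t\ge t$ (forced, since $H$ has $t$ distinct positive elements with maximum $h_t$), and then take $A=[1,k]$ together with $H=[h_t-t+1,\,h_t]$. When $k=1$ this gives $HA=H$, of size $t=h_t(k-1)+t$. When $k\ge 2$ one has $hA=[h,hk]$, and the consecutive intervals $[h,hk]$ and $[h+1,(h+1)k]$ overlap because $h+1\le hk$; hence $HA=[h_t-t+1,\,h_tk]$, which has exactly $h_tk-h_t+t=h_t(k-1)+t$ elements, showing the bound is attained.

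I do not anticipate a serious obstacle here: the only substantive point is the monotonicity $\min(h_1A)<\cdots<\min(h_tA)$, and this rests squarely on $A$ being a subset of $\mathbb{N}$. It is worth flagging that this monotonicity collapses as soon as $0$ or negative integers are admitted into $A$, so the analogous lower bounds in those cases will require a genuinely different accounting of the ``new'' sums contributed by the smaller elements of $H$ — presumably the business of the later sections.
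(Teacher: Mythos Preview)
Your argument is correct. In the present paper the statement is quoted from \cite{BHANJA2021} rather than proved directly; the only in-house justification is the remark (following Corollary~\ref{Corollary 2.1}) that it falls out of Theorem~\ref{Direct Theorem} upon taking $r=\max(H)=h_t$, so that every $h^{(r)}A$ collapses to $hA$ and $\mathcal{L}(H^{(r)}A)$ simplifies to $h_t(k-1)+t$. If one traces that specialization through the proof of Theorem~\ref{Direct Theorem}, the mechanism is a decomposition of $HA$ into $t$ translated blocks $S_i=(h_i-h_{i-1})A_i+\max(h_{i-1}A)$, each bounded below via Theorem~\ref{Nathanson Theorem I} and separated by the inequalities $\max(S_i)<\min(S_{i+1})$. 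Your route is different and leaner for this particular case: you keep $h_tA$ intact, apply Theorem~\ref{Nathanson Theorem I} once, and then exhibit the $t-1$ extra points $h_1a_1<\cdots<h_{t-1}a_1$ lying strictly below $\min(h_tA)=h_ta_1$. The block decomposition is what survives for general $r$ (where the pieces $h_i^{(r)}A$ interact in a more complicated way and the sets $T_i$ are needed), whereas your argument isolates precisely the role of positivity of $A$ and needs no case analysis. Your optimality example $A=[1,k]$, $H=[h_t-t+1,h_t]$ also matches the bound for every admissible triple $(k,t,h_t)$, which is a little sharper than what the paper records for this special case.
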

	
	\begin{theorem}\textup{\cite[Theorem 5]{BHANJA2021}}\label{Bhanja II}
		Let A be a set of $k  \geq 2 $ positive integers and H be a set of $t  \geq 2 $ positive integers with $\max(H) = h_{t}$. If $$\left|HA\right| = h_{t}(k - 1)+ t,$$ then H is an arithmetic progression with common difference d and A is an arithmetic progression with common difference $d \ast  \min(A).$
	\end{theorem}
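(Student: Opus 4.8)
The plan is to adapt the extremal‑decomposition idea underlying the direct bound of Theorem \ref{Bhanja I} and then feed the result into Nathanson's inverse theorem for $hA$. Write $A=\{a_1<\cdots<a_k\}$ and $H=\{h_1<\cdots<h_t\}$, so $a_1=\min(A)>0$ and $h_t=\max(H)$. First I would note that $HA$ must contain the block $h_tA$ together with the $t-1$ numbers $h_1a_1,\dots,h_{t-1}a_1$, which are pairwise distinct (as $a_1>0$) and all strictly less than $\min(h_tA)=h_ta_1$; since $|h_tA|\ge h_t(k-1)+1$ by Theorem \ref{Nathanson Theorem I}, this already reproves $|HA|\ge h_t(k-1)+t$. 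The equality hypothesis therefore forces \emph{both} $|h_tA|=h_t(k-1)+1$ \emph{and} the exact identity $HA=h_tA\sqcup\{h_1a_1,\dots,h_{t-1}a_1\}$; this identity is the structural input for everything that follows.

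Because $t\ge2$ we have $h_t\ge2$, so $|h_tA|=h_t|A|-h_t+1$ together with Theorem \ref{Nathanson Theorem I} shows $A$ is an arithmetic progression. Put $d':=a_2-a_1>0$, so that $hA=\{ha_1+jd':0\le j\le h(k-1)\}$ and in particular $ha_1+d'=(h-1)a_1+a_2\in hA$ for every $h\ge1$. Next I would pin down $d'$: the element $h_{t-1}a_1+d'$ of $h_{t-1}A$ cannot equal any $h_ja_1$ with $j\le t-1$, since that would force $d'=(h_j-h_{t-1})a_1\le0$; so by the identity above it must lie in $h_tA$, and writing $h_{t-1}a_1+d'=h_ta_1+jd'$ and rearranging gives $(1-j)d'=(h_t-h_{t-1})a_1>0$, hence $j=0$ and $d'=(h_t-h_{t-1})a_1$. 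Setting $d:=h_t-h_{t-1}\ge1$, this already identifies $A$ as an arithmetic progression with common difference $d\ast\min(A)$ — provided $d$ turns out to be the common difference of $H$.

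Proving that $H$ is an arithmetic progression with common difference $d$ is the step I expect to be the main obstacle. I would argue by downward induction, showing $h_{i+1}-h_i=d$ for $i=t-1,t-2,\dots,1$ (the case $i=t-1$ being the definition of $d$, and the case $t=2$ being automatic). For the inductive step, assume $h_{i'+1}-h_{i'}=d$ for all $i'$ with $i<i'\le t-1$, so $h_{i+1}=h_t-(t-1-i)d\le h_t-d$, and look at $v:=h_ia_1+d'=(h_i+d)a_1\in h_iA\subseteq HA$. Applying the identity for $HA$: if $v\in h_tA$ then, substituting $d'=da_1$, one gets $h_i=h_t+(j-1)d$ for some integer $j\ge0$, which contradicts $h_i<h_{i+1}\le h_t-d$; hence $v=h_ja_1$ for some $j\le t-1$, giving $h_j=h_i+d>h_i$, so $h_j\ge h_{i+1}$, and if $h_j>h_{i+1}$ then $h_j\ge h_{i+2}=h_{i+1}+d>h_i+d=h_j$ by the inductive hypothesis — impossible. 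So $h_j=h_{i+1}$, i.e. $h_{i+1}-h_i=d$, closing the induction. The delicate part is exactly this bookkeeping: for each gap one must rule out that $h_ia_1+d'$ falls in the large block $h_tA$ and that it coincides with a previously identified small point other than $h_{i+1}a_1$, and the inductive hypothesis is what makes both checks go through. Once $H$ is an arithmetic progression with common difference $d$, the two stated conclusions follow.
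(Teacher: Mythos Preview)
Your argument is correct. Note, however, that the paper does not supply its own proof of this statement: Theorem~\ref{Bhanja II} is quoted from \cite{BHANJA2021} as a known result, so there is no in-paper proof to compare against directly.

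That said, it is worth contrasting your method with the technique the paper uses for its genuine inverse result, Theorem~\ref{Inverse Theorem}, which specialises (for $r=h_t$, though only for $k\ge 6$) to the same conclusion. The paper's decomposition is layered: it writes $H^{(r)}A\supseteq\bigcup_i R_i$ with $R_i=S_i\cup T_i$ and $S_i=(h_i-h_{i-1})^{(r)}A_i+\max(h_{i-1}^{(r)}A)$, forces each $|S_i|$ to be minimal, applies the Mistri--Pandey/Nathanson inverse theorem to one of these pieces to make $A$ an arithmetic progression, and then reads off $h_{i+1}-h_i$ from the single squeezed equality $\min\bigl((h_{i+1}-h_i)^{(r)}A_{i+1}\bigr)=\max(R_i)-\max_{-}(R_i)$. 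Your decomposition is much coarser --- one large block $h_tA$ plus the $t-1$ isolated points $h_ia_1$ --- and you recover the gaps $h_{i+1}-h_i$ by a downward induction testing where $(h_i+d)a_1$ can land. Your route is more elementary and, unlike the paper's generalisation, needs no lower bound on $k$ beyond $k\ge2$; the paper's layered decomposition, on the other hand, is what scales to the restricted sumset $H^{(r)}A$, where a single block $h_t^{(r)}A$ plus isolated minima would not suffice to reconstruct the full structure.
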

	\begin{theorem}\textup{\cite[Theorem 6, Corollary 7]{BHANJA2021}}\label{Bhanja III}
		Let A be a set of $k$ nonnegative integers and $H = \lbrace h_{1}, h_{2}, \ldots, h_{t} \rbrace $ be a set of	positive integers with $h_{1} < h_{2} < \cdots < h_{t}$. Set $h_{0}=0$. If $0\notin A$ and $h_{t} \leq k$, then
		\begin{equation*}
			\left|H^{\wedge}A\right|  \geq \sum_{i=1}^{t} (h_{i} - h_{i-1}) (k - h_{i}) + t.
		\end{equation*}
		If $0\in A$ and $  h_{t} \leq k-1$, then
		\begin{equation*}
			\left|H^{\wedge}A\right|  \geq h_{1}+ \sum_{i=1}^{t} (h_{i} - h_{i-1}) (k - h_{i}-1) + t.
		\end{equation*}
		The  lower bounds  are optimal.
	\end{theorem}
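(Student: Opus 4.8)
I would prove both inequalities simultaneously by induction on $t=|H|$, stripping off the largest member $h_t$ of $H$ and bounding how many elements of the summand $h_t^{\wedge}A$ are \emph{new}, i.e.\ not already contained in $\bigcup_{j<t}h_j^{\wedge}A=(H')^{\wedge}A$, where $H'=\{h_1,\dots,h_{t-1}\}$. Write $A=\{a_1<a_2<\cdots<a_k\}$. The only structural fact needed is that $\max(h^{\wedge}A)=M_h:=a_{k-h+1}+\cdots+a_k$, the sum of the $h$ largest elements, and that $M_1<M_2<\cdots<M_{h_t}$, since passing from $M_h$ to $M_{h+1}$ adjoins the term $a_{k-h}$, which is positive throughout the relevant range (immediate when $0\notin A$; when $0\in A$ one uses $h\le h_t-1\le k-2$, so that $k-h\ge2$ and $a_{k-h}\ge a_2>0$ — this is exactly why the second part requires the sharper hypothesis $h_t\le k-1$). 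Consequently any element of $h_t^{\wedge}A$ exceeding $M_{h_{t-1}}$ lies outside $h_j^{\wedge}A$ for every $j<t$, hence outside $(H')^{\wedge}A$. The base case $t=1$ is precisely Theorem~\ref{Nathanson Theorem II}: $h_1k-h_1^2+1$ equals both $(h_1-h_0)(k-h_1)+1$ and $h_1+(h_1-h_0)(k-h_1-1)+1$.

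For the inductive step I would write $|H^{\wedge}A|=|(H')^{\wedge}A|+|h_t^{\wedge}A\setminus(H')^{\wedge}A|$, estimate the first term by the induction hypothesis (valid because $h_{t-1}<h_t$ preserves the hypothesis on $\max(H)$), and reduce the theorem to the single claim
\[
\bigl|h_t^{\wedge}A\setminus(H')^{\wedge}A\bigr|\ \ge\
\begin{cases}
(h_t-h_{t-1})(k-h_t)+1 & \text{if }0\notin A,\\
(h_t-h_{t-1})(k-h_t-1)+1 & \text{if }0\in A.
\end{cases}
\]
Granting this, the two pieces add up \emph{exactly} to the asserted right-hand sides (the extra $h_1$ in the $0\in A$ formula is simply carried along from the base case), so no additional algebra is required.

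To prove the claim, put $M:=M_{h_{t-1}}=a_{k-h_{t-1}+1}+\cdots+a_k$ and $A^{*}:=\{a_1,\dots,a_{k-h_{t-1}}\}$. For each $(h_t-h_{t-1})$-element subset $T\subseteq A^{*}$, the set $S:=T\cup\{a_{k-h_{t-1}+1},\dots,a_k\}$ is an $h_t$-element subset of $A$ with $s(S)=M+s(T)\in h_t^{\wedge}A$, and as $T$ varies $s(T)$ runs over all of $(h_t-h_{t-1})^{\wedge}A^{*}$. Theorem~\ref{Nathanson Theorem II} applied to $A^{*}$ (legitimate since $1\le h_t-h_{t-1}\le k-h_{t-1}=|A^{*}|$) gives at least
\[
(h_t-h_{t-1})|A^{*}|-(h_t-h_{t-1})^2+1=(h_t-h_{t-1})(k-h_t)+1
\]
distinct such values, hence that many distinct values $s(S)=M+s(T)$. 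Each is $\ge M$, and equals $M$ only if $s(T)=0$, which forces $h_t-h_{t-1}=1$ and $T=\{a_1\}=\{0\}$ — impossible when $0\notin A$ and occurring at most once when $0\in A$. Discarding that at most one exceptional value leaves at least $(h_t-h_{t-1})(k-h_t)+1$ values $>M$ when $0\notin A$, and at least $(h_t-h_{t-1})(k-h_t)\ge(h_t-h_{t-1})(k-h_t-1)+1$ values $>M$ when $0\in A$; by the monotonicity of $M_h$ each of these lies in $h_t^{\wedge}A\setminus(H')^{\wedge}A$, which proves the claim.

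Lastly, I would check optimality by exhibiting extremal configurations: taking $A$ and $H$ to be arithmetic progressions — say $A=[1,k]$, $H=[1,t]$ in the first case and $A=[0,k-1]$, $H=[1,t]$ in the second — makes each $h^{\wedge}A$ an interval, so $H^{\wedge}A$ is a single interval whose length one computes to match the formula. The step I expect to be most delicate is the $0\in A$ case: one must verify that the single lost value $s(T)=0$ cannot propagate through the induction, that the weakened per-step estimate still sums exactly to the advertised bound, and that $M_1<\cdots<M_{h_t}$ genuinely holds all the way to the boundary — which is precisely the content of the hypothesis $h_t\le k-1$. (Alternatively, one can derive the $0\in A$ case from the $0\notin A$ case by writing $A=\{0\}\cup A'$ with $A'=A\setminus\{0\}$ and using $h^{\wedge}A=h^{\wedge}A'\cup(h-1)^{\wedge}A'$, which recasts $H^{\wedge}A$ as a restricted $H''$-fold sumset of the set of positive integers $A'$ for a suitable $H''\supseteq H$.)
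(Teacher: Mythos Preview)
Your proof is correct and rests on the same key construction as the paper: the shifted restricted sumset $S_i=(h_i-h_{i-1})^{\wedge}A_i+\max(h_{i-1}^{\wedge}A)$ with $A_i=\{a_1,\dots,a_{k-h_{i-1}}\}$, which is exactly what the paper's proof of Theorem~\ref{Direct Theorem} specialises to when $r=1$ (then all $T_i=\emptyset$). The only presentational differences are that you package the argument as an induction on $t$ while the paper builds all $t$ disjoint blocks $S_1,\dots,S_t$ at once, and that for the $0\in A$ case your primary route keeps $0$ in the set and absorbs the possible loss of one element per step, whereas the paper (Corollary~\ref{Corollary 2.1}) removes $0$ and applies the positive case to $A\setminus\{0\}$, adjoining $h_1^{\wedge}B$ with $B=\{0,a_1,\dots,a_{h_1}\}$ --- precisely the alternative you sketch in your final parenthetical remark.
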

	\begin{theorem}\textup{\cite[Theorem 9, Corollary 10]{BHANJA2021}}\label{Bhanja IV}
		Let A be a set of k nonnegative integers. Let $H = \lbrace h_{1}, h_{2}, \ldots, h_{t} \rbrace $ be a set of	positive integers with $h_{1} < h_{2} < \cdots < h_{t}$. Set $h_{0}=0$. If $0 \notin A$, $k\geq 6$, $h_{t} \leq k-1$, and  \[ \left|H^{\wedge}A\right|  = \sum_{i=1}^{t} (h_{i} - h_{i-1}) (k - h_{i}) + t,\] then $H = h_{1} + [0,t-1]$ and $ A = \min(A) \ast [1,k]$.\\
		If $0 \in A$, $k\geq 7$, $h_{t} \leq k-2$, and   \[ \left|H^{\wedge}A\right|  = h_{1}+ \sum_{i=1}^{t} (h_{i} - h_{i-1}) (k - h_{i}) + t,\] then $H = h_{1} + [0,t-1]$ and $ A = \min(A\setminus\{0\}) \ast [0,k-1]$.
	\end{theorem}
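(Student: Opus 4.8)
The two displayed cases are linked, so the first move is to reduce the case $0\in A$ to the case $0\notin A$; throughout we may assume $t\ge 2$, as the statement presupposes. If $0\in A$, set $A^{*}=A\setminus\{0\}$ (a set of $k-1$ positive integers) and $H'=(H-1)\cup H$. Classifying each $h$-subset of $A$ by whether it contains $0$ gives $h^{\wedge}A=(h-1)^{\wedge}A^{*}\cup h^{\wedge}A^{*}$, hence $H^{\wedge}A=\bigcup_{h'\in H'}(h')^{\wedge}A^{*}=(H')^{\wedge}A^{*}$ (using $0^{\wedge}A^{*}=\{0\}$ when $h_{1}=1$). When $h_{1}\ge 2$ the set $H'$ has $t+1$ elements with $\max H'=h_{t}\le k-2=(k-1)-1$ and $|A^{*}|=k-1\ge 6$; applying the case $0\notin A$ of the theorem to $(A^{*},H')$ then forces $H'$ to be a block of consecutive integers and $A^{*}=\min(A^{*})\ast[1,k-1]$, and translating back yields $H=h_{1}+[0,t-1]$ and $A=\min(A\setminus\{0\})\ast[0,k-1]$; the subcase $h_{1}=1$ is identical after deleting the extra element $0$ of $H'$. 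Thus the hypotheses $k\ge 7$, $h_{t}\le k-2$ are exactly what make this reduction legitimate, and it remains to treat $0\notin A$, say $A=\{a_{1}<\dots<a_{k}\}$ with $a_{1}\ge 1$.

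Now assume $0\notin A$ and induct on $t$. Put $M_{i}=\max(h_{i}^{\wedge}A)$; positivity of the $a_{j}$ gives $0<M_{1}<\dots<M_{t}$ and $h_{i}^{\wedge}A\subseteq(0,M_{i}]$. Writing $H'=\{h_{1},\dots,h_{t-1}\}$, we have $(H')^{\wedge}A\subseteq H^{\wedge}A\cap(0,M_{t-1}]$, while the $h_{t}$-subsets of $A$ containing its $h_{t-1}$ largest elements contribute to $H^{\wedge}A\cap(M_{t-1},M_{t}]$ the translate $M_{t-1}+(h_{t}-h_{t-1})^{\wedge}\{a_{1},\dots,a_{k-h_{t-1}}\}$, of size $\ge(h_{t}-h_{t-1})(k-h_{t})+1$ by Theorem~\ref{Nathanson Theorem II}. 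Adding these two disjoint contributions and using Theorem~\ref{Bhanja III} for $H'$,
\[
|H^{\wedge}A|\ \ge\ |(H')^{\wedge}A|+(h_{t}-h_{t-1})(k-h_{t})+1\ \ge\ \sum_{i=1}^{t}(h_{i}-h_{i-1})(k-h_{i})+t .
\]
So equality in the theorem forces $|(H')^{\wedge}A|=\sum_{i=1}^{t-1}(h_{i}-h_{i-1})(k-h_{i})+(t-1)$, i.e.\ $(A,H')$ is also extremal. If $t\ge 3$, the induction hypothesis (whose requirements $k\ge 6$, $h_{t-1}\le h_{t}-1\le k-2$ are inherited) gives $H'=h_{1}+[0,t-2]$ and $A=\min(A)\ast[1,k]$; since for this $A$ one has $h^{\wedge}A=\min(A)\ast[\binom{h+1}{2},\,hk-\binom{h}{2}]$, a short computation shows that adjoining the block $h_{t}^{\wedge}A$ overshoots the prescribed value unless $h_{t}=h_{1}+t-1$, which completes the inductive step.

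It remains to settle the base case $t=2$, $H=\{h_{1},h_{2}\}$, where equality forces $|h_{1}^{\wedge}A|=h_{1}(k-h_{1})+1$ together with tightness of the top layer. If $h_{1}\ge 2$, then $2\le h_{1}\le k-2$ and $|A|=k\ge 6$, so by the inverse part of Theorem~\ref{Nathanson Theorem II} the set $A$ is an arithmetic progression $a+d\ast[0,k-1]$; expressing $h_{1}^{\wedge}A$ and $h_{2}^{\wedge}A$ as the progressions $h_{j}a+d\ast[\binom{h_{j}}{2},\,h_{j}(k-1)-\binom{h_{j}}{2}]$ of common difference $d$ and comparing $|h_{1}^{\wedge}A\cup h_{2}^{\wedge}A|$ with the required value shows that the two progressions must lie in one residue class modulo $d$ and overlap by exactly the right length, which forces $h_{2}=h_{1}+1$ and $a=d$, that is, $A=\min(A)\ast[1,k]$. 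If $h_{1}=1$, then $h_{1}^{\wedge}A=A$ carries no information; instead tightness of the top layer makes $|(h_{2}-1)^{\wedge}\{a_{1},\dots,a_{k-1}\}|$ minimal and makes every $h_{2}$-subset sum of $\{a_{1},\dots,a_{k-1}\}$ both lie in $A$ and not exceed $a_{k}$. For $3\le h_{2}\le k-2$ a counting argument (via the lower bound of Theorem~\ref{Nathanson Theorem II}, there being at least $h_{2}(k-1-h_{2})+1$ such sums, all exceeding $a_{h_{2}}$, but only $k-h_{2}<h_{2}(k-1-h_{2})+1$ room for them in $A$) shows no extremal configuration exists; the boundary values $h_{2}=k-1$ (vacuous) and $h_{2}=2$ (where a direct count of $|A\cup 2^{\wedge}A|$ gives $A=\min(A)\ast[1,k]$) are checked by hand.

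The crux of the argument is the base case, and within it the fact that the inverse form of Nathanson's restricted-sumset theorem (Theorem~\ref{Nathanson Theorem II}) is available only in the range $2\le h\le|A|-2$. This is precisely what forces the split according to whether $h_{1}=1$, the ad hoc treatment of the boundary orders $h=2$ and $h=|A|-2$, and---after the reduction of the first paragraph---the numerical hypotheses $k\ge 6$, and $k\ge 7$ with $h_{t}\le k-2$ in the case $0\in A$, which guarantee that every sumset produced by the argument has admissible order in a set of admissible size.
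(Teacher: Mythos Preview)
There are two genuine gaps.

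\textbf{The reduction from $0\in A$ to $0\notin A$.} You set $H'=(H-1)\cup H$ and assert that ``when $h_{1}\ge 2$ the set $H'$ has $t+1$ elements''. This is false: for instance $H=\{2,3,5\}$ gives $H'=\{1,2,3,4,5\}$, which has $5\ne t+1=4$ elements. In fact $|H'|=t+1$ holds precisely when $H$ is already a block of consecutive integers, which is part of what you are trying to prove. Without knowing $|H'|$ you cannot write down the extremal bound for $(A^{*},H')$, so you never verify the hypothesis needed to invoke the $0\notin A$ case. Worse, even if you could conclude that $H'$ is a block, this does \emph{not} force $H$ to be a block (the same example $H=\{2,3,5\}$ has $H'=[1,5]$ consecutive).

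\textbf{The base case $t=2$, $h_{1}=1$.} You claim that tightness ``makes every $h_{2}$-subset sum of $\{a_{1},\dots,a_{k-1}\}$ both lie in $A$ and not exceed $a_{k}$'', and your counting argument for $3\le h_{2}\le k-2$ rests on this. But the claim fails already in the extremal example $A=\{1,\dots,6\}$, $H=\{1,2\}$: the $2$-subset sums of $\{1,\dots,5\}$ include $7,8,9$, which exceed $a_{6}=6$ and lie in the translate $a_{k}+(h_{2}-1)^{\wedge}\{a_{1},\dots,a_{k-1}\}$, not in $A$. Equality only gives $H^{\wedge}A=A\cup T$ with $T$ the translate; sums from $h_{2}^{\wedge}\{a_{1},\dots,a_{k-1}\}$ may land in $T$, so the pigeonhole count ``only $k-h_{2}$ room for them in $A$'' does not apply.

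For comparison: the paper does not re-prove this statement (it is quoted from \cite{BHANJA2021}), but its own inverse result for the generalization $H^{(r)}A$ (Theorem~\ref{Inverse Theorem} and Corollary~\ref{Corollary 3.4}) proceeds quite differently. There is no induction on $t$; instead one uses the explicit layer decomposition $R_{i}=S_{i}\cup T_{i}$ from the proof of the direct theorem, observes that global equality forces each $|R_{i}|$ to be minimal, applies the single-$h$ inverse theorem (here Theorem~\ref{Nathanson Theorem II}) to one well-chosen layer to get $A$ arithmetic, and then compares $\max_{-}(R_{i})$ with $\min(R_{i+1})$ to read off the common difference of $H$ directly. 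The $h_{1}=1$ case is handled by a separate short argument showing $a_{1}+a_{i}=a_{i+1}$, avoiding the containment claim above. Your inductive strategy is natural, but the two steps flagged above need to be replaced by arguments of this more explicit kind.
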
	
	In this paper, we prove similar direct and inverse results for the sumset $H^{(r)}A$ when $A$ is a finite nonempty set of positive integers. In sections 2 and 3, we prove our main theorems, Theorem \ref{Direct Theorem} and Theorem \ref{Inverse Theorem}, the direct  and inverse theorems for sumset $H^{(r)}A$, when $A$ is a finite set of positive integers.  Consequentaly we prove direct and inverse theorems when  $A$ contains nonnegative  integers with $0 \in A$.
	
	To state our main results we need some notation that are used throughout the paper. Let $H = \lbrace h_{1}, h_{2}, \ldots, h_{t} \rbrace$ be a set of positive integers with $0 = h_{0}< h_{1} < h_{2} < \cdots < h_{t}$ and $r$ be a positive integer.  If $t=1$, then $H^{(r)}A = h_{1}^{(r)}A$. So, we assume $t \geq 2$.  If $r>h_{t}$,  then $h^{(r)}_{i}A=h_{i}A$ for $1 \leq i \leq t$, giving  $H^{(r)}A =HA$. So we assume that $r \leq h_{t}$.  There always exists a unique positive integer  $l$ such that $h_{l-1} < r \leq h_{l}, \text{ where } 1 \leq l \leq t$. For $i=1,2, \ldots, t$, let $  h_{i}=m_{i}r+\epsilon_{i}, \text{ where } 0 \leq \epsilon_{i} \leq r-1$. For given set $H$ of positive integers and set of integers $A$ with $|H|=t$ and $|A|=k$, let
	\begin{multline}
		\mathcal{L}(H^{(r)}A) =    h_{l-1}(k-1)+(l-1)  + \sum_{i=l}^{t} r(m_{i}-m_{i-1})(k-m_{i})  \\ + \sum_{i=l}^{t}\Big((\epsilon_{i}-\epsilon_{i-1})(k-m_{i}-1)-\max\{\epsilon_{i},\epsilon_{i-1}\}(m_{i}-m_{i-1})+1 \Big).
	\end{multline}
	Note that, if  $0\leq i \leq l-1$, then $m_{i}=0$ and $\epsilon_{i}=h_{i}$. So,  we can also write \[	\mathcal{L}(H^{(r)}A) =    \sum_{i=1}^{t} \Big( r(m_{i}-m_{i-1})(k-m_{i})+(\epsilon_{i}-\epsilon_{i-1})(k-m_{i}-1)-\max\{\epsilon_{i},\epsilon_{i-1}\}(m_{i}-m_{i-1})+1 \Big).\]
	For $i=1, \ldots,t$, define
	\[M_{i}=\Big\lfloor \dfrac{h_{i}-h_{i-1}}{r} \Big \rfloor\]
	and for $j=0, \ldots,t-1$, define
	\[
	N_{j}=
	\begin{cases}
		\Big\lceil  \dfrac{h_{j}}{r} \Big\rceil  & \text{if } l-1 \leq j \leq t-1\\
		0 & \text{otherwise}.
	\end{cases}
	\]
	Also, let $\lbrace 0 \rbrace ^{(r)}A=\lbrace 0 \rbrace$.
	\section{Direct Theorems}
	\begin{theorem}\label{Direct Theorem}
		Let $A$ be a nonempty finite set of $k \geq 3$ positive integers.  Let $r$ be a positive integer and  $H$  be a  set  of  $t \geq 2$ positive integers with  $1 \leq r \leq \max(H) \leq  (k-1)r-1$. Then
		\begin{equation}\label{Direct Theorem  Eq- 1}
			\left|H^{(r)}A\right| \geq    \mathcal{L}(H^{(r)}A).
		\end{equation}
		This lower bound is best possible.
	\end{theorem}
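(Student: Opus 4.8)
The proof proceeds by reducing the $H$-fold problem to a union of restricted-type sumsets, then carefully accounting for overlaps between consecutive pieces. The starting point is the observation that, since $A$ consists of positive integers with $a_1 < a_2 < \cdots < a_k$, each sumset $h_i^{(r)}A$ has a well-defined minimum $h_i a_1$ and maximum $h_i a_k$, and these are strictly increasing in $h_i$. This lets us linearly order the "blocks" $h_1^{(r)}A, h_2^{(r)}A, \ldots, h_t^{(r)}A$ and control how much two consecutive blocks $h_{i-1}^{(r)}A$ and $h_i^{(r)}A$ can overlap: the overlap lives in the interval $[h_i a_1, h_{i-1}a_k]$, so
\[
|H^{(r)}A| \;\ge\; \sum_{i=1}^{t} |h_i^{(r)}A| \;-\; \sum_{i=2}^{t} \big|\,h_{i-1}^{(r)}A \cap [h_i a_1,\, h_{i-1}a_k]\,\big|,
\]
and I would bound each intersection term from above by the number of integers of $h_{i-1}^{(r)}A$ lying in that window. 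This is exactly where the quantities $m_i, \epsilon_i$ and the split at the index $l$ (where $h_{l-1} < r \le h_l$) enter: for $i \le l-1$ we have $m_i = 0$, $\epsilon_i = h_i$, so $h_i^{(r)}A = h_i^{\wedge}A$ is a restricted sumset, and Theorem~\ref{Nathanson Theorem II} applies; for $i \ge l$, $h_i^{(r)}A$ is a genuine generalized sumset and Theorem~\ref{MistriPandey Theorem I} gives the lower bound $m_i r(k-m_i) + \epsilon_i(k - 2m_i - 1) + 1$ on $|h_i^{(r)}A|$.

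The bulk of the work is the bookkeeping: I would first establish the telescoping identity that shows the claimed bound $\mathcal{L}(H^{(r)}A)$ is precisely what one gets by summing the Mistri--Pandey lower bounds for the individual blocks and subtracting the maximal possible consecutive overlaps. Concretely, for consecutive indices I would show the overlap contribution between block $i-1$ and block $i$ is at most $|h_{i-1}^{(r)}A| - \big(r(m_i - m_{i-1})(k-m_i) + (\epsilon_i - \epsilon_{i-1})(k - m_i - 1) - \max\{\epsilon_i,\epsilon_{i-1}\}(m_i - m_{i-1})\big)$, i.e. the "new" elements contributed by block $i$ number at least the parenthesized quantity plus $1$; summing over $i = 1, \ldots, t$ and using $\{0\}^{(r)}A = \{0\}$ with $h_0 = 0$ yields $\mathcal{L}(H^{(r)}A)$ in the second (symmetric) form stated in the excerpt. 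The constraint $\max(H) \le (k-1)r - 1$ guarantees $m_t \le k-1$ (with room to spare), so all the factors $k - m_i$, $k - m_i - 1$ appearing above are positive and the invoked estimates of Theorems~\ref{Nathanson Theorem II} and~\ref{MistriPandey Theorem I} are in their valid ranges; the hypotheses $k \ge 3$, $t \ge 2$ rule out degenerate cases.

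For optimality, I would exhibit $A = [1,k]$ (or more generally $A = d \ast [1,k]$) together with $H$ an arithmetic progression, say $H = h_1 + [0, t-1]$ with an appropriately chosen common difference, and compute $|H^{(r)}A|$ directly: with $A = [1,k]$ each $h_i^{(r)}A$ is an interval $[h_i, h_i a_k - (\text{correction})]$ — more precisely $h_i^{(r)}[1,k]$ is a full interval of integers whose length matches the Mistri--Pandey bound — and consecutive intervals overlap in exactly the predicted amount, so the union has cardinality exactly $\mathcal{L}(H^{(r)}A)$. I expect the main obstacle to be the overlap estimate for the transitional block $i = l$, where $h_{l-1}^{(r)}A$ is a restricted sumset but $h_l^{(r)}A$ is not: here one must reconcile the two different formulas at their boundary, and the term $h_{l-1}(k-1) + (l-1)$ in $\mathcal{L}(H^{(r)}A)$ — which is exactly the Bhanja-type bound $|H'A| \ge h_{l-1}(k-1) + (l-1)$ for $H' = \{h_1, \ldots, h_{l-1}\}$ from Theorem~\ref{Bhanja I}, except it is a restricted sumset so one needs the $h^{\wedge}$ analogue — has to be matched up carefully with the tail sum. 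Getting the $\max\{\epsilon_i, \epsilon_{i-1}\}$ term right in the overlap count (it comes from the fact that when $\epsilon_i < \epsilon_{i-1}$ the "loss" in passing from one block to the next is governed by the larger residue) is the subtle combinatorial point that the rest of the argument hinges on.
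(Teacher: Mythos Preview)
Your high-level strategy is the same as the paper's --- count, for each $i$, how many elements of $h_i^{(r)}A$ lie strictly above $\max(h_{i-1}^{(r)}A)$ (these are ``new'') and sum --- but the proposal contains one factual error and one genuine gap.

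\textbf{The error.} You assert that $\min(h_i^{(r)}A)=h_i a_1$ and $\max(h_i^{(r)}A)=h_i a_k$. This is false once $h_i>r$: since no summand may repeat more than $r$ times, one has
\[
\min(h_i^{(r)}A)=r a_1+\cdots+r a_{m_i}+\epsilon_i a_{m_i+1},\qquad
\max(h_i^{(r)}A)=\epsilon_i a_{k-m_i}+r a_{k-m_i+1}+\cdots+r a_k.
\]
The monotonicity in $i$ that you use does survive, but your overlap window $[h_i a_1,\,h_{i-1}a_k]$ is wrong, and the formulas downstream would not match $\mathcal{L}(H^{(r)}A)$.

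\textbf{The gap.} Even after correcting the window, ``bound the number of elements of $h_{i-1}^{(r)}A$ lying in that window'' is not something you can read off from Theorem~\ref{MistriPandey Theorem I}: that theorem gives only a \emph{lower} bound on $|h_{i-1}^{(r)}A|$ and says nothing about where those elements sit. The actual content of the proof is to \emph{construct} enough elements of $h_i^{(r)}A$ that exceed $\max(h_{i-1}^{(r)}A)$. The paper does this by writing $S_i=(h_i-h_{i-1})^{(r)}A_i+\max(h_{i-1}^{(r)}A)$ with $A_i=\{a_1,\ldots,a_{k-N_{i-1}}\}$ (so that $S_i\subseteq h_i^{(r)}A$ and everything in $S_i$ exceeds $\max(h_{i-1}^{(r)}A)$), applying Theorem~\ref{MistriPandey Theorem I} to the difference sumset $(h_i-h_{i-1})^{(r)}A_i$, and then --- this is the part you flag as ``subtle'' but do not address --- adjoining an explicit set $T_i$ of further elements between $\max(S_i)$ and $\max(h_i^{(r)}A)$. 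The construction of $T_i$ splits into eight cases according to the sign of $\epsilon_i-\epsilon_{i-1}$ and the size of $m_i-m_{i-1}$, and it is precisely this case analysis that produces the $\max\{\epsilon_i,\epsilon_{i-1}\}(m_i-m_{i-1})$ correction term. Your proposal names this term as the crux but gives no mechanism for obtaining it; without that, the argument is a restatement of the claim rather than a proof.

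For optimality, your choice $A=[1,k]$ with $H$ an arithmetic progression is correct and matches the paper (which takes $H=[1,(k-1)r-1]$).
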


	\begin{proof}
		Let $A=\lbrace a_{1}, a_{2}, \ldots, a_{k} \rbrace$ and $H=\lbrace h_{1},h_{2}, \ldots, h_{t}\rbrace$ be such that
		\[0<a_{1}<a_{2}< \cdots< a_{k} \ \text{and} \ 0=h_{0}<h_{1}< h_{2}<\cdots<h_{t}.\]
		For $i=0,1, \ldots, t $, write  $h_{i}=m_{i}r+ \epsilon_{i}, \ \text{where} \  0\leq \epsilon_{i} \leq r-1$. Then, we have $$0 = m_{0}\leq m_{1} \leq m_{2} \leq \cdots \leq m_{t} \leq  k-2.$$
		 Since  $l$ is a positive integer satisfying $h_{l-1} < r \leq h_{l}$, we have  $m_{i}=0$ and $\epsilon_{i}=h_{i}$ for $i=0, \ldots, l-1$. Set $S_{0}=\emptyset$.
		Define
		\begin{equation*}\label{generalized sumset eq 2}
			S_{i}=(h_{i}-h_{i-1})^{(r)}A_{i}+\max\lbrace h^{(r)}_{i-1}A \rbrace   \text{ for } i=1, 2, \ldots, t,
		\end{equation*}
		where 	
		\[A_{i}= \lbrace a_{1}, \ldots, a_{k-N_{i-1}} \rbrace \text{ for } i=1 , 2, \ldots, t.\]
		Note that $S_{i} \subseteq h^{(r)}_{i}A \subseteq H^{(r)}A$ and $\max(S_{i})<\min(S_{i+1})$ for all $i\in[1,t-1]$.  We shall define   sets $T_{i} \subseteq h_{i}^{(r)}A$ that satistfy $T_{i}\cap S_{i}=\emptyset$ for $i\in[0,t]$. Let $R_{i} = S_{i}\cup T_{i} \subseteq h_{i}^{(r)}A, \  \text{for} \ i= 0,1,\ldots,t$. If $i\in[0,l-1]$, then  define
		$T_{i}=\emptyset$. So, $|R_{0}| = 0$ for $l\geq 1$, and  by Theorem \ref{Nathanson Theorem I}, we have  $\left|R_{i}\right|=\left|S_{i}\right| \geq (h_{i}-h_{i-1})(k-1)+1$ for $l \geq 2$ and $i\in[1,l-1]$. If $i\in [l,t]$, then we define $T_{i}$  for every possible values of $\epsilon_{i-1}$ and $\epsilon_{i}$,  and consequently find $|R_{i}|$.\\
		$\clubsuit$ Let $i\in [l,t]$ be such that $\epsilon_{i-1}=0$ and $\epsilon_{i} \geq 0$. Then $M_{i}=m_{i}-m_{i-1}$ and $N_{i-1} = m_{i-1}$. Let $T_{i}=\emptyset$ in this case. Then  by Theorem \ref{MistriPandey Theorem I}, we have
		\begin{align*}
			\left|R_{i}\right|   &=	\left|S_{i}\right|  +  \left|T_{i}\right|\\ &	\geq M_{i}r(k-N_{i-1}-M_{i})+(h_{i}-h_{i-1}-M_{i}r)(k-N_{i-1}-2M_{i}-1)+1 \\ & = r(m_{i}-m_{i-1})(k-m_{i})+(\epsilon_{i}-\epsilon_{i-1})(k-m_{i}-1)-\epsilon_{i}(m_{i}-m_{i-1})+1.
		\end{align*}
		$\clubsuit$ Let $i\in [l,t]$ be such that $\epsilon_{i} > \epsilon_{i-1} >0$ and $m_{i}= m_{i-1}$. Then $M_{i}=m_{i}-m_{i-1}=0$ and $N_{i-1} = m_{i-1}+1$.
		For $j=0,1, \ldots, \epsilon_{i}-\epsilon_{i-1}$,  define
		\[T^{0}_{i,j}=(\epsilon_{i}-\epsilon_{i-1}-j)a_{k-m_{i}-1} + (\epsilon_{i-1}+j)a_{k-m_{i}} +  \sum_{p=1}^{m_{i}}ra_{k-m_{i}+p}.\]
		Then, we have  $\max(S_{i})=T^{0}_{i,0}< T^{0}_{i,1}< \cdots<T^{0}_{i,\epsilon_{i}-\epsilon_{i-1}}=\max(h_{i}^{(r)}A)<\min(S_{i+1})$. Let
		\begin{equation}\label{DT-eq-1}
			T_{i} = \Big\{T^{0}_{i,j} : j=1, \ldots, \epsilon_{i}-\epsilon_{i-1}\Big\}.	
		\end{equation}
		Then by Theorem \ref{MistriPandey Theorem I} and (\ref{DT-eq-1}), we have
		\begin{align*}
			\left|R_{i}\right| &=	\left|S_{i}\right|  +  \left|T_{i}\right|\\ &	\geq M_{i}r(k-N_{i-1}-M_{i})+(h_{i}-h_{i-1}-M_{i}r)(k-N_{i-1}-2M_{i}-1)+1 + (\epsilon_{i}-\epsilon_{i-1})\\ & = r(m_{i}-m_{i-1})(k-m_{i})+(\epsilon_{i}-\epsilon_{i-1})(k-m_{i}-1)-\epsilon_{i}(m_{i}-m_{i-1})+1.
		\end{align*}
		$\clubsuit$ Let $i\in [l,t]$ be such that $\epsilon_{i} > \epsilon_{i-1} > 0$ and $m_{i}= m_{i-1} + 1$. Then $M_{i}=m_{i}-m_{i-1}=1$ and $N_{i-1}=m_{i-1}+1=m_{i}$. For $j=0, \ldots, \epsilon_{i}-\epsilon_{i-1}-1$, define
		\[T^{0}_{i,j}=(\epsilon_{i}-\epsilon_{i-1}-j)a_{k-m_{i-1}-2}+ ra_{k-m_{i-1}-1} + (\epsilon_{i-1}+j)a_{k-m_{i-1}} +  \sum_{p=1}^{m_{i-1}}ra_{k-m_{i-1}+p},\]
		\[T^{1}_{i,j}=(\epsilon_{i}-\epsilon_{i-1}-j)a_{k-m_{i-1}-2}+ (r-1)a_{k-m_{i-1}-1} + (\epsilon_{i-1}+j+1)a_{k-m_{i-1}} +  \sum_{p=1}^{m_{i-1}}ra_{k-m_{i-1}+p} \]
		and for $j=0, 1, \ldots, r-\epsilon_{i}$,
		\[U^{0}_{i,j}= (r-j)a_{k-m_{i-1}-1} + (\epsilon_{i}+j)a_{k-m_{i-1}} +  \sum_{p=1}^{m_{i-1}}ra_{k-m_{i-1}+p}.\]
		Then, we have  $\max(S_{i})=T^{0}_{i,0}<T^{1}_{i,0}<T^{0}_{i,1}<T^{1}_{i,1}< \cdots < T^{0}_{i,\epsilon_{i}-\epsilon_{i-1}-1} < T^{1}_{i,\epsilon_{i}-\epsilon_{i-1}-1} < U^{0}_{i,0}<U^{0}_{i,1} < \cdots < U^{0}_{i,r-\epsilon_{i}}=\max(h_{i}^{(r)}A) <\min(S_{i+1})$. Assume $\Big\{T^{0}_{i,j}: j=1,\ldots,\epsilon_{i}-\epsilon_{i-1}-1 \Big\} = \emptyset,$ if $\epsilon_{i}-\epsilon_{i-1}=1$.  Let
		\begin{equation}\label{DT-eq-2}
			T_{i}= \Big\{T^{0}_{i,j}: j=1,\ldots, \epsilon_{i}-\epsilon_{i-1}-1 \Big\}  \bigcup \Big\{T^{1}_{i,j}: j=0,\ldots, \epsilon_{i}-\epsilon_{i-1}-1 \Big\}  \bigcup \Big\{U^{0}_{i,j}: j=0,\ldots, r-\epsilon_{i} \Big\}.	
		\end{equation}
		Then by Theorem \ref{MistriPandey Theorem I} and (\ref{DT-eq-2}), we have
		\begin{align*}
			\left|R_{i}\right| &=	\left|S_{i}\right|  +  \left|T_{i}\right|\\ & \geq M_{i}r(k-N_{i-1}-M_{i})+(h_{i}-h_{i-1}-M_{i}r)(k-N_{i-1}-2M_{i}-1)+1 + 2(\epsilon_{i}-\epsilon_{i-1})+(r-\epsilon_{i})\\
			& = r(m_{i}-m_{i-1})(k-m_{i})+(\epsilon_{i}-\epsilon_{i-1})(k-m_{i}-1)-\epsilon_{i}(m_{i}-m_{i-1})+1.
		\end{align*}
		$\clubsuit$ Let $i\in [l,t]$ be such that $\epsilon_{i} > \epsilon_{i-1} > 0$ and $m_{i}> m_{i-1} + 1$. Then $M_{i}=m_{i}-m_{i-1} \geq 2$ and $N_{i-1}=m_{i-1}+1$. For $j=0, \ldots,\epsilon_{i}-\epsilon_{i-1}-1$ and $q=1,  \ldots, m_{i}-m_{i-1}$, define \[T^{0}_{i,j}=(\epsilon_{i}-\epsilon_{i-1}-j)a_{k-m_{i}-1} + \left( \sum_{p=1}^{m_{i}-m_{i-1}}ra_{k-m_{i}-1+p} \right) + (\epsilon_{i-1}+j)a_{k-m_{i-1}} +  \sum_{p=1}^{m_{i-1}}ra_{k-m_{i-1}+p},\]
		\begin{multline*}
			T^{q}_{i,j} =  (\epsilon_{i}-\epsilon_{i-1}-j)a_{k-m_{i}-1} + \left( \sum_{p=1, \  p \neq m_{i}-m_{i-1}+1-q }^{m_{i}-m_{i-1}}ra_{k-m_{i}-1+p} \right) + (r-1)a_{k-m_{i-1}-q} \\ + (\epsilon_{i-1}+j+1)a_{k-m_{i-1}} +  \sum_{p=1}^{m_{i-1}}ra_{k-m_{i-1}+p}.	
		\end{multline*}
		For $j=0,\ldots,r-\epsilon_{i}-1$ and $q=1, \ldots, m_{i}-m_{i-1}-1$,  define
		\[U^{0}_{i,j}=(r-j)a_{k-m_{i}} + \sum_{p=1}^{m_{i}-m_{i-1}-1}ra_{k-m_{i}+p} + (\epsilon_{i}+j)a_{k-m_{i-1}} + \sum_{p=1}^{m_{i-1}}ra_{k-m_{i-1}+p},\]
		\begin{multline*}
			U^{q}_{i,j}=(r-j)a_{k-m_{i}}+ \left(\sum_{p=1, \ p \neq m_{i}-m_{i-1}-q}^{m_{i}-m_{i-1}-1} ra_{k-m_{i}+p}\right)  + (r-1)a_{k-m_{i-1}-q}+(\epsilon_{i}+j+1)a_{k-m_{i-1}}\\ +  \sum_{p=1}^{m_{i-1}}ra_{k-m_{i-1}+p}.
		\end{multline*}
		Furthermore, define   \[U^{0}_{i,r-\epsilon_{i}}=\epsilon_{i}a_{k-m_{i}}+ \sum_{p  =  k-m_{i}+1}^{k} ra_{p}.\]
		Then  $U^{0}_{i,r-\epsilon_{i}} = \max(h_{i}^{(r)}A) <\min(S_{i+1})$ and
		\begin{equation*}
			\begin{array}{cccccccccccc}
				\max(S_{i})=T^{0}_{i,0} & < &T^{1}_{i,0} & < &\cdots  & < &T^{m_{i}-m_{i-1}}_{i,0} & < & \\ [5pt] T^{0}_{i,1} & < &T^{1}_{i,1} & < &\cdots  & < &T^{m_{i}-m_{i-1}}_{i,1} &<&\\ [7pt]
				\vdots&\vdots&\vdots&\vdots&  & \vdots & \vdots & \vdots\\ [7pt]
				T^{0}_{i,\epsilon_{i}-\epsilon_{i-1}-1} & <& T^{1}_{i,\epsilon_{i}-\epsilon_{i-1}-1}&<&\cdots&<&T^{m_{i}-m_{i-1}}_{i,\epsilon_{i}-\epsilon_{i-1}-1}&< &\\ [7pt]
				U^{0}_{i,0}&<&U^{1}_{i,0}&<& \cdots&<&U^{m_{i}-m_{i-1}-1}_{i,0} &<&\\ [7pt]
				U^{0}_{i,1}&<&U^{1}_{i, 1}&<& \cdots &<& U^{m_{i}-m_{i-1}-1}_{i, 1} &<&\\ [7pt]
				\vdots&\vdots&\vdots&\vdots&  & \vdots & \vdots & \vdots \\ [7pt]
				U^{0}_{i, r-\epsilon_{i}-1}&<&U^{1}_{i, r-\epsilon_{i}-1}&<&\cdots&<&U^{m_{i}-m_{i-1}-1}_{i, r-\epsilon_{i}-1} & < &U^{0}_{i,r-\epsilon_{i}}.
			\end{array}
		\end{equation*}
		Assume $\Big\{T^{0}_{i,j}: j=1,\ldots,\epsilon_{i}-\epsilon_{i-1}-1 \Big\} = \emptyset,$ if $\epsilon_{i}-\epsilon_{i-1}=1$.
		Let
		\begin{align}\label{DT-eq-3}
			T_{i} =&\Big\{T^{0}_{i,j}: j=1,\ldots,\epsilon_{i}-\epsilon_{i-1}-1 \Big\}    \bigcup \Big\{T^{q}_{i,j}: j=0,\ldots,\epsilon_{i}-\epsilon_{i-1}-1 \ \text{and} \ q=1,\ldots,m_{i}-m_{i-1}\Big\} \nonumber\\ & \bigcup \Big\{U^{0}_{i,j}: j=0,\ldots,r-\epsilon_{i} \Big\}     \bigcup \Big\{U^{q}_{i,j}: j=0,\ldots,r-\epsilon_{i}-1 \ \text{and} \ q=1,\ldots,m_{i}-m_{i-1}-1\Big\}. 		
		\end{align}
		Then by Theorem \ref{MistriPandey Theorem I} and (\ref{DT-eq-3}), we have
		\begin{align*}
			\left|R_{i}\right| &=	\left|S_{i}\right|  +  \left|T_{i}\right|\\ &\geq M_{i}r(k-N_{i-1}-M_{i})+(h_{i}-h_{i-1}-M_{i}r)(k-N_{i-1}-2M_{i}-1)+1\\ & \ \ \ \ \ \ \ \ \ \ \ \ \ \ \ \ \ \ \ \ \ \ \ \ +  (\epsilon_{i}-\epsilon_{i-1})(m_{i}-m_{i-1}+1)+(r-\epsilon_{i})(m_{i}-m_{i-1})\\ &  =r(m_{i}-m_{i-1})(k-m_{i})+(\epsilon_{i}-\epsilon_{i-1})(k-m_{i}-1)-\epsilon_{i}(m_{i}-m_{i-1})+1.
		\end{align*}
		$\clubsuit$ Let $i\in [l,t]$ be such that $\epsilon_{i} = \epsilon_{i-1} > 0$ and $m_{i}= m_{i-1} + 1$. Then $M_{i}=m_{i}-m_{i-1}=1$ and $N_{i-1}=m_{i-1}+1$.
		For $j=0, \ldots, r-\epsilon_{i}$, define
		\[U^{0}_{i,j}= (r-j)a_{k-m_{i-1}-1} + (\epsilon_{i}+j)a_{k-m_{i-1}} +  \sum_{p=1}^{m_{i-1}}ra_{k-m_{i-1}+p}.\]
		Then $\max(S_{i})= U^{0}_{i,0}<U^{0}_{i,1}  < \cdots < U^{0}_{i,r-\epsilon_{i}} = \max(h_{i}^{(r)}A) < \min(S_{i+1})$. Let
		\begin{equation}\label{DT-eq-4}
			T_{i}= \Big\{\ U^{0}_{i,j} : j=1,\ldots,r-\epsilon_{i}\Big\}.	
		\end{equation}
		Then by Theorem \ref{MistriPandey Theorem I} and (\ref{DT-eq-4}), we have
		\begin{align*}
			\left|R_{i}\right| &=	\left|S_{i}\right|  +  \left|T_{i}\right|\\ & \geq M_{i}r(k-N_{i-1}-M_{i})+(h_{i}-h_{i-1}-M_{i}r)(k-N_{i-1}-2M_{i}-1)+1 +(r-\epsilon_{i})\\
			& = r(m_{i}-m_{i-1})(k-m_{i})+(\epsilon_{i}-\epsilon_{i-1})(k-m_{i}-1)-\epsilon_{i}(m_{i}-m_{i-1})+1.
		\end{align*}
		$\clubsuit$ Let $i\in [l,t]$ be such that $\epsilon_{i} = \epsilon_{i-1} > 0$ and $m_{i}> m_{i-1} + 1$. Then $M_{i}=m_{i}-m_{i-1} \geq 2$ and $N_{i-1}=m_{i-1}+1$.
		For $j=0,\ldots,r-\epsilon_{i}-1$ and $q=1, \ldots, m_{i}-m_{i-1}-1$, define
		\[U^{0}_{i,j}=(r-j)a_{k-m_{i}}+\left(\sum_{p=1}^{m_{i}-m_{i-1}-1} ra_{k-m_{i}+p}\right)+(\epsilon_{i}+j)a_{k-m_{i-1}}+ \sum_{p=1}^{m_{i-1}}ra_{k-m_{i-1}+p}\]
		and	
		\begin{multline*}
			U^{q}_{i,j}=(r-j)a_{k-m_{i}}+ \left(\sum_{p=1, \ p \neq m_{i}-m_{i-1}-q}^{m_{i}-m_{i-1}-1} ra_{k-m_{i}+p}\right) + (r-1)a_{k-m_{i-1}-q}+(\epsilon_{i}+j+1)a_{k-m_{i-1}} \\
			+  \sum_{p=1}^{m_{i-1}}ra_{k-m_{i-1}+p}.
		\end{multline*}
		Furthermore, define  \[U^{0}_{i,r-\epsilon_{i}}=\epsilon_{i}a_{k-m_{i}}+ \sum_{p=1}^{m_{i}} ra_{k-m_{i}+p}.\]
		It is easy to see that  $U^{0}_{i,r-\epsilon_{i}} = \max(h_{i}^{(r)}A) <\min(S_{i+1})$ and
		\begin{equation*}
			\begin{array}{cccccccccc}
				\max(S_{i})=U^{0}_{i,0} & < & U^{1}_{i,0} & < & \cdots & < & U^{m_{i}-m_{i-1}-1}_{i,0} & <  \\ [7pt]
				U^{0}_{i,1} & < & U^{1}_{i, 1} & < & \cdots & < & U^{m_{i}-m_{i-1}-1}_{i, 1} & <  \\ [7pt]
				\vdots &\vdots&\vdots&\vdots&&\vdots&\vdots&\vdots & \\ [7pt]
				U^{0}_{i, r-\epsilon_{i}-1} & < & U^{1}_{i, r-\epsilon_{i}-1} & < & \cdots &< & U^{m_{i}-m_{i-1}-1}_{i, r-\epsilon_{i}-1} & < & U^{0}_{i,r-\epsilon_{i}}.
			\end{array}
		\end{equation*}
		Let
		\begin{equation}\label{DT-eq-5}
			T_{i} = \left\{ U^{0}_{i,j} :  j=1,\ldots, r-\epsilon_{i} \right\} \bigcup \Big \{ U^{q}_{i,j} :    j=0, \ldots,r-\epsilon_{i}-1 \ \text{and} \ q=1,\ldots,m_{i}-m_{i-1}-1  \Big\}.
		\end{equation}
		Then by Theorem \ref{MistriPandey Theorem I} and (\ref{DT-eq-5}), we have
		\begin{align*}
			\left|R_{i}\right| &=	\left|S_{i}\right|  +  \left|T_{i}\right|\\ & \geq M_{i}r(k-N_{i-1}-M_{i})+(h_{i}-h_{i-1}-M_{i}r)(k-N_{i-1}-2M_{i}-1)+1 +(r-\epsilon_{i})(m_{i}-m_{i-1})\\
			& = r(m_{i}-m_{i-1})(k-m_{i})+(\epsilon_{i}-\epsilon_{i-1})(k-m_{i}-1)-\epsilon_{i}(m_{i}-m_{i-1})+1.
		\end{align*}
		$\clubsuit$ Let $i\in [l,t]$ be such that $ \epsilon_{i} < \epsilon_{i-1} $ and $m_{i}= m_{i-1} + 1$. Then $M_{i}=m_{i}-m_{i-1}-1 = 0$ and $N_{i-1}=m_{i-1}+1=m_{i}$. For $j=0, \ldots, r-\epsilon_{i-1}$, define \[T^{0}_{i,j}=(r+\epsilon_{i}-\epsilon_{i-1}-j)a_{k-m_{i-1}-1} + (\epsilon_{i-1}+j)a_{k-m_{i-1}} +  \sum_{p=1}^{m_{i-1}}ra_{k-m_{i-1}+p}.\]	
		Then $\max(S_{i}) = T^{0}_{i,0}<T^{0}_{i,1}<\cdots<T^{0}_{i,r-\epsilon_{i-1}} = \max(h_{i}^{(r)}A) < \min(S_{i+1})$.
		Let
		\begin{equation}\label{DT-eq-6}
			T_{i}=\{T^{0}_{i,j} : j=1, \ldots, r-\epsilon_{i-1}\}.
		\end{equation}
		Then by Theorem \ref{MistriPandey Theorem I} and (\ref{DT-eq-6}), we have
		\begin{align*}
			\left|R_{i}\right| &=\left|S_{i}\right|  +  \left|T_{i}\right|\\ & \geq M_{i}r(k-N_{i-1}-M_{i})+(h_{i}-h_{i-1}-M_{i}r)(k-N_{i-1}-2M_{i}-1)+1 +(r-\epsilon_{i-1})\\
			& = r(m_{i}-m_{i-1})(k-m_{i})+(\epsilon_{i}-\epsilon_{i-1})(k-m_{i}-1)-\epsilon_{i-1}(m_{i}-m_{i-1})+1.
		\end{align*}
		$\clubsuit$ Let $i\in [l,t]$ be such that $ \epsilon_{i} < \epsilon_{i-1} $ and $m_{i}> m_{i-1} + 1$. Then $M_{i}=m_{i}-m_{i-1}-1 \geq 1$ and $N_{i-1}=m_{i-1}+1$.	For $j=0, \ldots,r-\epsilon_{i-1}-1$ and $q=1, \ldots, m_{i}-m_{i-1}-1$,  define
		\[T^{0}_{i,j}=(r+\epsilon_{i}-\epsilon_{i-1}-j)a_{k-m_{i}} + \left( \sum_{p=1}^{m_{i}-m_{i-1}-1}ra_{k-m_{i}+p} \right) + (\epsilon_{i-1}+j)a_{k-m_{i-1}} +  \sum_{p=1}^{m_{i-1}}ra_{k-m_{i-1}+p},\]
		\begin{multline*}
			T^{q}_{i,j}=(r+\epsilon_{i}-\epsilon_{i-1}-j)a_{k-m_{i}} + \left( \sum_{p=1, \ p \neq m_{i}-m_{i-1}-q }^{m_{i}-m_{i-1}-1}ra_{k-m_{i}+p} \right) + (r-1)a_{k-m_{i-1}-q} \\+ (\epsilon_{i-1}+j+1)a_{k-m_{i-1}} +  \sum_{p=1}^{m_{i-1}}ra_{k-m_{i-1}+p}.	
		\end{multline*}
		Define also
		\[T^{0}_{i,r-\epsilon_{i-1}}=\epsilon_{i}a_{k-m_{i}}+ \sum_{p=1}^{m_{i}} ra_{k-m_{i}+p}.\]
		It is easy to see that $T^{0}_{i,r-\epsilon_{i-1}}  = \max(h_{i}^{(r)}A) <\min(S_{i+1})$ and
		\begin{equation*}
			\begin{array}{ccccccccccc}
				\max(S_{i})=T^{0}_{i,0} & < & T^{1}_{i,0} & < & \cdots & < & T^{m_{i}-m_{i-1}-1}_{i,0} & < \\ [7pt]
				T^{0}_{i,1} & < & T^{1}_{i,1} &< &  \cdots  & < & T^{m_{i}-m_{i-1}-1}_{i,1} & < \\ [7pt]
				\vdots& \vdots & \vdots& \vdots & & \vdots & \vdots& \vdots \\ [7pt]
				T^{0}_{i, r-\epsilon_{i-1}-1} & < & T^{1}_{i, r-\epsilon_{i-1}-1} & < & \cdots & < & T^{m_{i}-m_{i-1}-1}_{i, r-\epsilon_{i-1}-1} &< &T^{0}_{i,r-\epsilon_{i-1}}  <\min(S_{i+1}).
			\end{array}
		\end{equation*}
		Let
		\begin{equation}\label{DT-eq-7}
			T_{i} =\Big\{T_{i,j}^{0}: j=1, \ldots, r-\epsilon_{i-1}\Big\} \bigcup \Big\{T_{i,j}^{q}:j=0,\ldots, r-\epsilon_{i-1}-1 \ \text{and} \ q=1, \ldots, m_{i}-m_{i-1}-1\Big\}.
		\end{equation}
		Then by Theorem \ref{MistriPandey Theorem I} and (\ref{DT-eq-7}), we have
		\begin{align*}
			\left|R_{i}\right| &=	\left|S_{i}\right|  +  \left|T_{i}\right|\\ & \geq M_{i}r(k-N_{i-1}-M_{i})+(h_{i}-h_{i-1}-M_{i}r)(k-N_{i-1}-2M_{i}-1)+1+(r-\epsilon_{i-1})(m_{i}-m_{i-1})\\ &  > r(m_{i}-m_{i-1})(k-m_{i})+(\epsilon_{i}-\epsilon_{i-1})(k-m_{i}-1)-\epsilon_{i-1}(m_{i}-m_{i-1})+1.
		\end{align*}	
		Hence
		\begin{align*}
			&\left|H^{(r)}A\right| \geq  \sum_{i=0}^{t}\left|R_{i}\right|  = \sum_{i=0}^{l-1}\left|S_{i}\right|+\sum_{i=l}^{t}\left|S_{i}\cup T_{i}\right| \\ & \geq \sum_{i=1}^{l-1} (h_{i}-h_{i-1})(k-1)+1 \nonumber  \\ & \ \ \ \ \ \ \ \ \ + \sum_{i=l}^{t} r(m_{i}-m_{i-1})(k-m_{i}) + (\epsilon_{i}-\epsilon_{i-1})(k-m_{i}-1)-\max\{\epsilon_{i}, \epsilon_{i-1}\}(m_{i}-m_{i-1}) + 1\nonumber\\	&= h_{l-1}(k-1)+(l-1) \nonumber  \\ & \ \ \ \ \ \ \ \ + \sum_{i=l}^{t} r(m_{i}-m_{i-1})(k-m_{i)}+(\epsilon_{i}-\epsilon_{i-1})(k-m_{i}-1)-\max\{\epsilon_{i}, \epsilon_{i-1}\}(m_{i}-m_{i-1}) + 1 \\
			& =	\mathcal{L}(H^{(r)}A).
		\end{align*}
		This proves (\ref{Direct Theorem  Eq- 1}). Next, we show that this bound  is best possible. Let  $H = [1,(k-1)r-1]$, $A=   \{ 1, 2, \ldots, k \}$. Then $H^{(r)}A \subseteq [1,2(r-1)+3r+ \cdots + kr]$. So $\left|H^{(r)}A\right| \leq \dfrac{rk(k+1)}{2} - r-2$. On the other hand, we have by (\ref{Direct Theorem  Eq- 1}),   $\left|H^{(r)}A\right| \geq \dfrac{rk(k+1)}{2} - r-2$. This completes the proof of Theorem \ref{Direct Theorem}.
		
	\end{proof}
	\begin{remark}\label{Remark 2.2}
		Following the notation from Theorem \ref{Direct Theorem}.
		\begin{enumerate}

			\item [\upshape(a)] If
			$0=h_{0}<h_{1}<\cdots<h_{t_{0}-1}<(k-1)r \leq h_{t_{0}}<\cdots<h_{t} \leq kr$ with $t_{0} \geq 2$, then we have
			\[\max(h^{(r)}_{t_{0}-1}A)<\max\nolimits_{-}(h^{(r)}_{t_{0}}A)<\max (h^{(r)}_{t_{0}}A)<\max(h^{(r)}_{t_{0}+1}A)<\cdots<\max(h^{(r)}_{t}A). \]
			So  $\left|H^{(r)}A\right| \geq \left|H^{(r)}_{t_{0}-1}A\right| +  t-t_{0}+2 \geq \mathcal{L}(H_{t_{0}-1}^{(r)}A)+t-t_{0}+2$, where $H_{t_{0}-1}=\{h_{1}, \ldots,h_{t_{0}-1}\},  t_{0} \geq 2 $.
			This  lower bound is best possible, as that  can be verified with  $ A=[1,k]$ and $H=[1,rk]$. Clearly, we have $\left|H^{(r)}A\right|=\dfrac{rk(k+1)}{2}$.
			
			\item [\upshape(b)] If $0=h_{0}<(k-1)r\leq h_{1}<\cdots<h_{t}\leq kr$, then
			\[H^{(r)}A \supseteq h_{1}^{(r)}A \cup \{\max(h_{i}^{(r)}A) : i= 2, \ldots,t\}.\]
			Therefore
			\[\left|H^{(r)}A\right| \geq | h_{1}^{(r)}A | + t-1 \geq  m_{1}r(k-m_{1})+(h_{1}-m_{1}r)(k-2m_{1}-1)+t,\] where $m_{1}=\lfloor h_{1}/r \rfloor$. To check, this bound is best possible, we take $A=[1,k]$ and $H=[(k-1)r,kr]$. Then $H^{(r)}A=[r+2r+\cdots+(k-1)r,r+2r+\cdots+kr]$ and hence $\left|H^{(r)}A\right|=kr+1$.
		\end{enumerate}	
	\end{remark}

	\begin{cor}\label{Corollary 2.1}
		Let   $A$ be a nonempty finite set of $k\geq 4$ nonnegative  integers with $0 \in A$. Let $r$ be a positive integer and  $H$  be a  set  of  $t \geq 2$ positive integers with  $1 \leq r \leq \max (H) \leq (k-2)r-1 $. Let  $m = \lceil \min(H)/r  \rceil$ and $m_{1}=\lfloor \min(H)/r \rfloor$. Then
		\begin{equation}\label{Eq- coro-2.1}
			\left|H^{(r)}A\right| \geq    m_{1}r(m-m_{1}+1) + (\min(H)-m_{1}r)(m-2m_{1}) + \mathcal{L}\Big(H^{(r)}(A\setminus\{0\}) \Big).
		\end{equation}
		This lower bound is best possible.
	\end{cor}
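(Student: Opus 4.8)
The plan is to strip the summand $0$ from $A$ and reduce to the direct theorem (Theorem~\ref{Direct Theorem}) for the set $A':=A\setminus\{0\}$, which consists of $k-1\ge 3$ positive integers. Write $A=\{0\}\cup A'$ with $A'=\{a_{1}<\cdots<a_{k-1}\}$, and put $h_{1}=\min(H)$, $h_{1}=m_{1}r+\epsilon_{1}$ with $0\le\epsilon_{1}\le r-1$ (so $m=m_{1}$ when $\epsilon_{1}=0$ and $m=m_{1}+1$ otherwise). Since a summand equal to $0$ contributes nothing, for every $h\ge 1$ one has $h^{(r)}A=\bigcup_{s=0}^{\min(r,h)}(h-s)^{(r)}A'$, with the convention $0^{(r)}A'=\{0\}$. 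In particular $H^{(r)}A\supseteq H^{(r)}A'$, and applying Theorem~\ref{Direct Theorem} to $A'$ — legitimate because $|A'|=k-1\ge 3$, $t\ge 2$ and $1\le r\le\max(H)\le (k-2)r-1$, the last being exactly the bound $((k-1)-1)r-1$ required for a set of $k-1$ elements — gives $|H^{(r)}A'|\ge\mathcal L\big(H^{(r)}(A\setminus\{0\})\big)$. Moreover $h_{1}^{(r)}A$ contains all of the numbers $\min\big((h_{1}-s)^{(r)}A'\big)$ for $1\le s\le\min(r,h_{1})$, and these are strictly below $\min\big(h_{1}^{(r)}A'\big)=r(a_{1}+\cdots+a_{m_{1}})+\epsilon_{1}a_{m_{1}+1}=\min\big(H^{(r)}A'\big)$. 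Consequently $E:=\{x\in h_{1}^{(r)}A:\ x<\min(h_{1}^{(r)}A')\}$ is disjoint from $H^{(r)}A'$, whence
\[
\big|H^{(r)}A\big|\ \ge\ \big|H^{(r)}A'\big|+|E|\ \ge\ \mathcal L\big(H^{(r)}(A\setminus\{0\})\big)+|E|.
\]

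It therefore suffices to prove $|E|\ge m_{1}r(m-m_{1}+1)+(\min(H)-m_{1}r)(m-2m_{1})$. I would obtain this by exhibiting that many pairwise distinct elements of $h_{1}^{(r)}A$ lying below the threshold $\min(h_{1}^{(r)}A')$, in the same spirit as the chains $T^{q}_{i,j}$ and $U^{q}_{i,j}$ in the proof of Theorem~\ref{Direct Theorem}. Starting from the greedy minimal representation of $\min(h_{1}^{(r)}A')$ — which uses $r$ copies of each of $a_{1},\dots,a_{m_{1}}$ and $\epsilon_{1}$ copies of $a_{m_{1}+1}$ — one generates new, smaller sums by repeatedly exchanging a copy of a heavy summand ($a_{m_{1}+1}$, then $a_{m_{1}}$, then $a_{m_{1}-1},\dots$) either for a copy of $0$ (allowed as long as the multiplicity of $0$ stays $\le r$, which is what caps each chain) or for a copy of $a_{1}$. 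Every such elementary exchange strictly changes the value, so each chain is automatically totally ordered and its members distinct; one must also check that the largest member of each chain remains $<\min(h_{1}^{(r)}A')$. Arranging the exchanges into a doubly-indexed array and summing the chain lengths, exactly as in the case analysis of Theorem~\ref{Direct Theorem}, should yield the asserted count. The extreme case $\min(H)\le r$ (so $m_{1}=0$, $m=1$) is immediate, since then $E\supseteq\{0,a_{1},2a_{1},\dots,(h_{1}-1)a_{1}\}$ already has $h_{1}=\min(H)$ elements.

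For the optimality I would take $A=\min(A\setminus\{0\})\ast[0,k-1]$ (e.g.\ $A=[0,k-1]$) and $H=[1,(k-2)r-1]$, or more generally $H$ an arithmetic progression $h_{1}+[0,t-1]$, mirroring the extremal configurations of Theorems~\ref{Direct Theorem} and \ref{Bhanja IV}. For such $A$ the set $A\setminus\{0\}$ is an arithmetic progression, so $\mathcal L\big(H^{(r)}(A\setminus\{0\})\big)$ is attained by Theorem~\ref{Direct Theorem}, while both $H^{(r)}A$ and $H^{(r)}(A\setminus\{0\})$ are, after dividing out $\min(A\setminus\{0\})$, intervals of consecutive integers whose endpoints can be written down explicitly; comparing the two lengths turns \eqref{Eq- coro-2.1} into an equality.

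The part I expect to be the main obstacle is the middle step — pinning down $|E|$ on the nose. One has to split on the size of $\epsilon_{1}$ and of $m_{1}$ and, in each case, verify both that the prescribed chains of new sums are genuinely disjoint for an \emph{arbitrary} positive set $A'$ and that they stop precisely at $\min(h_{1}^{(r)}A')$. This is the same flavour of delicate, purely combinatorial bookkeeping as in the proof of Theorem~\ref{Direct Theorem}, made slightly more intricate here by the interaction between the first block $h_{1}$ of $H$ and the extra summand $0$.
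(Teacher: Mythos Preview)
Your overall architecture matches the paper's: set $A'=A\setminus\{0\}$, apply Theorem~\ref{Direct Theorem} to $A'$ (legitimate since $|A'|=k-1\ge 3$ and $\max(H)\le(k-2)r-1$), and then count extra elements of $h_{1}^{(r)}A$ lying strictly below $\min(h_{1}^{(r)}A')$. The optimality example $A=[0,k-1]$, $H=[1,(k-2)r-1]$ is also exactly the paper's.

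Where you diverge is in bounding $|E|$. You propose to rebuild, by hand, chains of exchanges analogous to the $T^{q}_{i,j}$ and $U^{q}_{i,j}$ of Theorem~\ref{Direct Theorem}, and you flag this as ``the main obstacle''. The paper sidesteps this entirely with a one-line observation: set $B=\{0,a_{1},\dots,a_{m}\}$ with $m=\lceil h_{1}/r\rceil$. Then $h_{1}^{(r)}B\subseteq h_{1}^{(r)}A$, and since $\max(h_{1}^{(r)}B)=ra_{1}+\cdots+ra_{m_{1}}+\epsilon_{1}a_{m_{1}+1}=\min(h_{1}^{(r)}A')$, the set $h_{1}^{(r)}B$ meets $H^{(r)}A'$ in exactly this one point. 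Hence $|E|\ge |h_{1}^{(r)}B|-1$, and Theorem~\ref{MistriPandey Theorem I} applied to $B$ (with $|B|=m+1$ and $\lfloor h_{1}/r\rfloor=m_{1}$) gives immediately
\[
|h_{1}^{(r)}B|\ \ge\ m_{1}r(m-m_{1}+1)+(h_{1}-m_{1}r)(m-2m_{1})+1,
\]
which is precisely your target plus one. So the ``delicate, purely combinatorial bookkeeping'' you anticipated is unnecessary: the count you want is already packaged inside Theorem~\ref{MistriPandey Theorem I}, and the whole corollary reduces to choosing the right auxiliary set $B$. Your approach would work, but it amounts to reproving a special case of Theorem~\ref{MistriPandey Theorem I} from scratch.
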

	
	\begin{proof}
		Let $A=\{0,a_{1},\ldots,a_{k-1}\}$ be a set of nonnegative integers with $0<a_{1}< \cdots <a_{k-1}$ and  $H = \lbrace h_{1}, h_{2}, \ldots, h_{t} \rbrace$ be a set of positive integers with $0 = h_{0}< h_{1}=\min(H) < h_{2} < \cdots < h_{t}=\max(H)$. Consider  $A^{'} = A \setminus \lbrace 0 \rbrace$.  Then $H^{(r)}A^{'} \subseteq H^{(r)}A$.
		
		Let $m = \lceil h_{1}/r \rceil$, $h_{1}=m_{1}r+\epsilon_{1}$, where $0 \leq \epsilon_{1} \leq r-1$  and $B=\{0,a_{1}, \ldots, a_{m}\}$. Then \[h^{(r)}_{1}B \subseteq H^{(r)}A\]
		and $h^{(r)}_{1}B \cap H^{(r)}A^{'} = \max(h^{(r)}_{1}B) = \min (H^{(r)}A^{\prime})= ra_{1} + \cdots + ra_{m_{1}} + \epsilon_{1}a_{m_{1}+1} $. Hence by Theorem \ref{MistriPandey Theorem I} and Theorem \ref{Direct Theorem}, we have
		\begin{align*}
			\left|H^{(r)}A\right| &\geq | h^{(r)}_{1}B |  + \left|H^{(r)}A^{\prime}\right| -1  \\ &\geq m_{1}r(m-m_{1}+1) + (\min(H)-m_{1}r)(m-2m_{1}) + \mathcal{L}\Big(H^{(r)}(A\setminus\{0\}) \Big) .
		\end{align*}
		This proves the Corollary. To check optimallity of the bound, take $A=[0,k-1]$ and $H=[1,(k-2)r-1] $. Then $H^{(r)}A \subseteq [0,2(r-1)+3r+ \cdots+(k-1)r]$ and $\left|H^{(r)}A\right| \leq \dfrac{rk(k-1)}{2}-r-1$. From (\ref{Eq- coro-2.1}), we have  $\left|H^{(r)}A\right| \geq \dfrac{rk(k-1)}{2}-r-1$.
	\end{proof}
	
	\begin{remark}\label{Remark 2.3}
		Following the notation from Corollary \ref{Corollary 2.1}.
		\begin{enumerate}
			\item [\upshape(a)]  If
			$0=h_{0}<h_{1}<\cdots<h_{t_{0}-1}<(k-2)r \leq h_{t_{0}}<\cdots<h_{t} \leq (k-1)r$ with $t_{0} \geq 2$, then we have
			\[\ \ \ \ \ \ \max(h^{(r)}_{t_{0}-1}A)<\max\nolimits_{-}(h^{(r)}_{t_{0}}A)<\max (h^{(r)}_{t_{0}}A)<\max(h^{(r)}_{t_{0}+1}A)<\cdots<\max(h^{(r)}_{t}A). \]
			So
			$\left|H^{(r)}A\right| \geq    m_{1}r(m-m_{1}+1) + (\min(H)-m_{1}r)(m-2m_{1}) + \mathcal{L}\Big(H^{(r)}(A\setminus\{0\}) \Big) +  t-t_{0}+2$.
			This  lower bound is best possible, as that  can be verified with  $ A=[0,k-1]$ and $H=[1,(k-1)r]$. Clearly, we have $\left|H^{(r)}A\right|=\dfrac{rk(k-1)}{2}+1$. Also, if we take $H = [1,(k-1)r] \cup X$, where $X \subseteq [(k-1)r + 1, kr]$, then again $\left|H^{(r)}A\right|=\dfrac{rk(k-1)}{2}+1$.
			
			\item [\upshape(b)] If $0=h_{0}<(k-2)r \leq h_{1}<\cdots<h_{t}\leq (k-1)r$, then
			\[H^{(r)}A \supseteq h_{1}^{(r)}A \cup \{\max(h_{i}^{(r)}A) : i= 2, \ldots,t\}.\] Therefore \[\left|H^{(r)}A\right| \geq | h_{1}^{(r)}A | + t-1 \geq m_{1}r(k-m_{1})+(h_{1}-m_{1}r)(k-2m_{1}-1)+t,\] where $m_{1}=\lfloor h_{1}/r \rfloor$. To check, this bound is best possible, we take $A=[0,k-1]$ and $H=[(k-2)r,(k-1)r]$. Then $H^{(r)}A=[r+2r+\cdots+(k-3)r,r+2r+\cdots+(k-1)r]$ and hence $\left|H^{(r)}A\right|=(2k-3)r+1$.
		\end{enumerate}	
	\end{remark}

	\begin{remark}
		\begin{enumerate}
			\item [\upshape(a)] For $r = \max(H) = h_{t}$, Theorem \ref{Bhanja I} follows from    Theorem \ref{Direct Theorem}  as a consequence.
			
			\item [\upshape(b)]  For  $r=1$,  Theorem \ref{Bhanja III} follows from  Remark \ref{Remark 2.2}  and Remark \ref{Remark 2.3}   as a consequence.
		\end{enumerate}
		
	\end{remark}

	\section{Inverse problem}
	This section deals with the inverse theorems associated with the sumset $H^{(r)}A$. In this section, we charaterize the  sets $A$ and $H$, when the cardinality of $H^{(r)}A$  is equal to its optimal lower bound. There are some cases in which either   $A$ or $H$ or both  may not be arithmetic progression  but size of $H^{(r)}A$ is equal to the  optimal lower bound (called extremal set). See some  extremal sets in  \cite[Section 3]{MISTRIPANDEY2014} and \cite[Section 2.2]{BHANJA2020}. Here we give some more  example of extremal sets.
	
	\begin{enumerate}
		
		\item Let $A$ be a set of $k\left(\geq 3\right)$ integers and $r$ be a positive integer. If $H=\{1,rk\}$ or $H= \{rk-1,rk\}$,  then $|H^{(r)}A|=k+1$.
		
		\item Let $A=\{a_{1},a_{2},a_{1}+a_{2}\}$ with  $0<a_{1}<a_{2}$ and $H\subseteq\{1,2,3\}$ with $r=1$; or $A=\{0,a_{1},a_{2},a_{1}+a_{2},\}$ with $H \subseteq \{1,2,3\}$ and $r=1$. Then the sets $A$ are extremal sets.
	\end{enumerate}
	We now present the main inverse results associated with $H^{(r)}A$.

	\begin{theorem}\label{Inverse Theorem}
		Let $r \geq 1$ be a positive integer, $A$ be a nonempty finite set of $k \geq 6$ positive integers and $H$  be a  set  of  $t \geq 2$ positive integers with $1\leq r \leq \max(H)  \leq (k-1)r-1  $.  If 	
		\begin{equation*}\label{Inverse Theorem -Eq-1}
			\left|H^{(r)}A\right| =   \mathcal{L}(H^{(r)}A),
		\end{equation*}
		then $H$ is  an arithmetic progression with common difference $d \leq r$ and  $A$ is an arithmetic progression with common difference $d\ast \min(A)$.
	\end{theorem}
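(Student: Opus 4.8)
The plan is to follow the same decomposition of $H^{(r)}A$ used in the proof of Theorem \ref{Direct Theorem} and track when each of the inequalities there can be an equality. Recall that we wrote $\left|H^{(r)}A\right|\ge\sum_{i=0}^{t}|R_i|$ where $R_i=S_i\cup T_i\subseteq h_i^{(r)}A$, the sets $R_i$ lie in consecutive nonoverlapping intervals, $S_i=(h_i-h_{i-1})^{(r)}A_i+\max\{h_{i-1}^{(r)}A\}$, and the lower bound for each $|R_i|$ came either from Theorem \ref{Nathanson Theorem I} (for $i\in[1,l-1]$) or from Theorem \ref{MistriPandey Theorem I} applied to $(h_i-h_{i-1})^{(r)}A_i$ (for $i\in[l,t]$), after adding the explicitly constructed extra elements $T_i$. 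Equality in $\left|H^{(r)}A\right|=\mathcal L(H^{(r)}A)$ forces $|R_i|$ to equal its stated bound for every $i$, and also forces $h_i^{(r)}A=\bigsqcup\text{(the pieces)}$ so that $H^{(r)}A$ is \emph{exactly} the disjoint union of the $R_i$'s together with the extremal structure inside each $h_i^{(r)}A$. First I would isolate two consecutive indices $i-1,i$ with $i\in[l,t]$ and $k-N_{i-1}\ge 5$, $2\le h_i-h_{i-1}\le (k-N_{i-1})r-2$, so that the inverse half of Theorem \ref{MistriPandey Theorem I}, i.e. Theorem \ref{MistriPandey Theorem II}, applies to $(h_i-h_{i-1})^{(r)}A_i$ and yields that $A_i=\{a_1,\dots,a_{k-N_{i-1}}\}$ is an arithmetic progression; since $k\ge 6$ and $\max(H)\le(k-1)r-1$ one checks this range is nonempty for at least one such $i$, and then a shifting argument across all blocks propagates the AP structure to all of $A$, giving $A=a_1\ast[1,k]$ up to the common difference (so common difference $=d\ast\min(A)$ with $d=1$ in the normalized picture, and in general $d\ast\min(A)$ after scaling by $\gcd$).

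Next I would extract the structure of $H$. The contribution of the block between $h_{i-1}$ and $h_i$ to $\mathcal L(H^{(r)}A)$ is an explicit function of $m_i-m_{i-1}$ and $\epsilon_i,\epsilon_{i-1}$; comparing the seven cases in the direct proof, the constructed set $T_i$ has a prescribed size, and for $\left|H^{(r)}A\right|$ to hit $\mathcal L$ we need $R_i=S_i\cup T_i$ to be \emph{all} of $h_i^{(r)}A$ in the relevant interval, which in turn (using that $A$ is now an AP, say $A=[1,k]$) pins down $h_i-h_{i-1}$. Concretely, with $A=[1,k]$ one has $h^{(r)}A=[\,\underline{s}(h),\overline{s}(h)\,]$ an interval for each $h$, and $\left|h^{(r)}A\right|$ is the exact MP bound; the union $H^{(r)}A$ is then a union of intervals, and its cardinality equals $\mathcal L$ iff consecutive intervals $h_{i-1}^{(r)}A$ and $h_i^{(r)}A$ overlap by exactly the amount accounted for in $\mathcal L$, which happens precisely when $h_i-h_{i-1}=d$ is constant and $d\le r$. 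This gives $H=h_1+[0,(t-1)d]\cdot\frac{1}{d}$, i.e. $H$ is an AP with common difference $d\le r$.

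I would then handle the small/degenerate blocks $i\in[1,l-1]$ (where $S_i$ comes from Theorem \ref{Nathanson Theorem I}): equality there forces each $S_i$ to meet its bound $(h_i-h_{i-1})(k-1)+1$, and since $h_i-h_{i-1}\ge 2$ is impossible to reconcile unless $A$ is an AP, this is consistent, while $h_i-h_{i-1}=1$ throughout $[1,l-1]$ matches the constant common difference found above; one also checks the boundary index $i=l$ separately using the explicit $T_l$. Finally I would rule out the sporadic extremal configurations listed before the theorem by the hypotheses $k\ge 6$ and $t\ge 2$ together with $\max(H)\le(k-1)r-1$, which exclude $H=\{1,rk\}$, $H=\{rk-1,rk\}$ and the tiny sets $A$ with $|A|\le 4$.

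The main obstacle I expect is the bookkeeping in the second paragraph: showing that equality in the \emph{aggregate} bound $\left|H^{(r)}A\right|=\mathcal L$ forces equality \emph{blockwise} and, more delicately, that the extremal block sizes force $h_i-h_{i-1}$ to be a single constant $d\le r$ rather than merely bounded. Because $\mathcal L$ is piecewise in the $\epsilon_i$'s and $m_i$'s, one has to argue that any deviation of some $h_i-h_{i-1}$ from the common value strictly increases the number of ``new'' elements contributed by block $i$ beyond what $\mathcal L$ allows — essentially a convexity/monotonicity statement about the function $h\mapsto$ (incremental size of $h^{(r)}A$ relative to $h_{i-1}^{(r)}A$) — and then invoke the already-established fact that $A$ is an AP to make this count exact. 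I expect this to mirror, case by case, the seven bullet constructions in the direct proof, with the inverse direction of Theorem \ref{MistriPandey Theorem II} doing the heavy lifting for the AP claim on $A$ and a direct interval computation doing the rest for $H$.
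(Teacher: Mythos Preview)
Your plan has a real gap in the step where you deduce that $A$ is an arithmetic progression. You propose to find an index $i\in[l,t]$ with $2\le h_i-h_{i-1}$ and then apply Theorem \ref{MistriPandey Theorem II} to $(h_i-h_{i-1})^{(r)}A_i$; you assert that ``since $k\ge 6$ and $\max(H)\le(k-1)r-1$ one checks this range is nonempty for at least one such $i$.'' That assertion is false: if $H$ is a block of consecutive integers (e.g.\ $H=[1,(k-1)r-1]$, or $r=1$ and $H=[1,k-2]$), then every gap $h_i-h_{i-1}$ equals $1$ and Theorem \ref{MistriPandey Theorem II} never applies to any block. Moreover, even when such an $i$ exists, it only tells you that the truncated set $A_i=\{a_1,\dots,a_{k-N_{i-1}}\}$ is an AP, and your ``shifting argument across all blocks'' to recover the top $N_{i-1}$ elements is not spelled out; this step is genuinely delicate and does not follow formally from blockwise equality alone.

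The paper avoids this by splitting on the value of $h_1$. If $h_1\ge 2$, then $R_1=h_1^{(r)}A$ itself attains the Mistri--Pandey bound, so Theorem \ref{MistriPandey Theorem II} applied to $h_1^{(r)}A$ (with the full set $A$, not a truncation) gives that $A$ is an AP directly. If $h_1=1$, the paper does a hands-on analysis of $R_1=A$ and $R_2$: when $h_2>2$ one first gets that $A_2=\{a_1,\dots,a_{k-1}\}$ is an AP and then forces $a_k-a_{k-1}=a_{k-1}-a_{k-2}$ by exhibiting explicit chains of elements between $\max(R_1)$ and $\min(R_2)$ (with a further subcase when $m_2=k-2$); when $h_2=2$ one shows $a_1+a_i=a_{i+1}$ for $i=2,\dots,k-1$ via $\{a_1+a_i\}\subseteq R_1=A$, and then pins down $a_2$ separately. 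None of this is captured by invoking Theorem \ref{MistriPandey Theorem II} on a single block. Your approach to the structure of $H$ (normalize $A$ to $[1,k]$ and compare interval overlaps) is in spirit close to the paper's, which instead derives the single identity $\min\{(h_{i+1}-h_i)^{(r)}A_{i+1}\}=a_2-a_1$ from $\max_{-}(R_i)$ and $\min(R_{i+1})$ and then runs a short three-case check on $\epsilon_i,\epsilon_{i+1}$ to get $(h_{i+1}-h_i)a_1=a_2-a_1$ for every $i$; that part of your outline would likely go through once $A$ is known to be an AP, but the first half needs the case analysis above rather than the nonexistent ``good'' index.
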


	\begin{proof}
		
		Let $A=\lbrace a_{1}, a_{2}, \ldots, a_{k} \rbrace$ and $H=\lbrace h_{1},h_{2}, \ldots, h_{t}\rbrace$ be such that \[0<a_{1}<a_{2}< \cdots< a_{k} \text{ and } 0=h_{0}<h_{1}< h_{2}<\cdots<h_{t}.\]  For $i=1, \ldots, t$, let $  h_{i}=m_{i}r+\epsilon_{i}, \text{ where } 0 \leq \epsilon_{i} \leq r-1$.
		Let $l$ be a positive integer such  that $h_{l-1} < r \leq h_{l}, \text{ where } 1 \leq l \leq t$.
		Since  $|H^{(r)}A|$ is equal to its lower bound given in  (\ref{Direct Theorem  Eq- 1}), we have, from the proof of  Theorem \ref{Direct Theorem} that,   $|H^{(r)}A|=\sum_{i=1}^{t} \left|R_{i}\right|$. This implies that \[\left|R_{1}\right| = \left| h^{(r)}_{1}A \right|=m_{1}r(k-m_{1})+(h_{1}-m_{1}r)(k-2m_{1}-1)+1\]
		and
		 $\left|R_{i}\right| = \left|S_{i}\right|+\left|T_{i}\right|=  r(m_{i}-m_{i-1})(k-m_{i})+(\epsilon_{i}-\epsilon_{i-1})(k-m_{i}-1)-\max\{\epsilon_{i},\epsilon_{i-1}\}(m_{i}-m_{i-1})+1,  \text{ for } i =2, \ldots, t.$ If $h_{1}>1$, then by Theorem \ref{MistriPandey Theorem II}, the set  $A$ is an arithmetic progression.  Let $h_{1}=1$ and  $h_{2}>2$. Then we have  \[R_{1}=A \text{ and } R_{2} = S_{2} \cup T_{2}. \]  Therefore $|S_{2}|$ is minimum and hence $A_{2}=\{a_{1},a_{2},\ldots,a_{k-1}\}$ is an arithmetic progression.  Now we show that $a_{k-1}-a_{k-2}=a_{k}-a_{k-1}$.
		Let $m_{2} \leq k-3$. We have
		\begin{align*}
			a_{m_{2}+1}&<\min((h_{2}-1)^{(r)}A_{2})+a_{m_{2}+1} \\ &<\min((h_{2}-1)^{(r)}A_{2})+a_{m_{2}+2}\\  &\vdots\\ &<\min((h_{2}-1)^{(r)}A_{2})+a_{k-1}\\
			&<\min((h_{2}-1)^{(r)}A_{2})+a_{k}=\min(R_{2}).
		\end{align*}
		We also have  $R_{1}=A$ and $a_{1}<a_{2}<\cdots<a_{m_{2}+1}<a_{m_{2}+2}<\cdots<a_{k}<\min((h_{2}-1)^{(r)}A_{2})+a_{k}=\min(R_{2})$. So $\min((h_{2}-1)^{(r)}A_{2})+a_{m_{2}+i}=a_{m_{2}+i+1}$ for $i=1,2, \ldots, k-m_{2}-1$. This gives $a_{k-1}-a_{k-2}=a_{k}-a_{k-1}$.
		
		Let $m_{2}=k-2$ and $\epsilon_{2}=0$. Then 	
		\begin{align*}
			a_{k-2}&<ra_{1}+\ldots+ra_{k-2}\\ &<ra_{1}+\ldots+ra_{k-3}+(r-1)a_{k-2}+a_{k-1}\\ &<ra_{1}+\ldots+ra_{k-3}+(r-1)a_{k-2}+a_{k}=\min(R_{2}).
		\end{align*}   This  implies that $ra_{1}+\ldots+ra_{k-3}+(r-1)a_{k-2}=a_{k-1}-a_{k-2}=a_{k}-a_{k-1}$.  Let $m_{2}=k-2$ and $\epsilon_{2}\geq 1$. Then $r \geq 2$, $m_{1}=0$ and  $a_{k-1}<\min(h_{2}^{(r)}A))< \min(R_{2})$.  Note that \[\left|R_{2}\right|=2r(k-2)-\epsilon_{2}(k-3)\] and by Theorem \ref{MistriPandey Theorem I}  \[\left|h_{2}^{(r)}A\right|\geq 2r(k-2)-\epsilon_{2}(k-3)+1.\]  Let $y$ be an element of  $ h_{2}^{(r)}A$, which is different from  $\min(h_{2}^{(r)}A)$. If $y\notin R_{2}$, then \[H^{(r)}A \supseteq \{a_{1}, a_{2}, \ldots,a_{k-1}\}  \cup \{\min(h_{2}^{(r)}A),y\} \cup (\bigcup_{i=2}^{t}R_{i}).\]	This gives $\left|H^{(r)}A\right| > \sum_{i=1}^{t} \left|R_{i}\right|$, which is not possible. Therefore  $y \in R_{2}$. This gives that $ h_{2}^{(r)}A=R_{2} \cup \{\min(h_{2}^{(r)}A)\}$ and \[\left|h_{2}^{(r)}A\right|= 2r(k-2)-\epsilon_{2}(k-3)+1\]	and so by Theorem \ref{MistriPandey Theorem II}, A is an arithmetic progression.
		
		Let $h_{1}=1$ and $h_{2}=2$. Then $R_{1}=A$. Consider $R_{1}^{\prime}=\{a_{1}+a_{i}: i=2, \ldots,k-1\}$, a subset of $h_{2}^{(r)}A$. Then $\max(R_{1}^{\prime})<\min(R_{2})=a_{1}+a_{k}$. Therefore $R_{1}^{\prime}\subseteq R_{1}=A$. This gives that $a_{1}+a_{i}=a_{i+1}$ for $i=2,\ldots,k-1$.  Also $a_{k}=a_{1}+a_{k-1}<a_{2}+a_{k-1}<a_{2}+a_{k}=\min\nolimits_{+}(R_{2})$ and  $a_{k}<\min(R_{2})<\min\nolimits_{+}(R_{2})$ give $a_{2}+a_{k-1}=a_{1}+a_{k}$. Hence $A$ is an arithmetic progression.
		
		Let  $A= a_{1}+d_{1}\cdot[0,k-1]$, where $d_{1}$ is the common difference of $A$.  We show that  $H$ is an arithmetic progression with common difference $d$ and $d_{1}=da_{1}$. Note that, for all $i\in[1,t-1]$, we have \[\max\nolimits_{-}(R_{i})< \min \lbrace (h_{i+1}-h_{i})^{(r)}A_{i+1} \rbrace+ \max\nolimits_{-}(R_{i})
		< \min(R_{i+1}).\]
		But we already know that
		\[\max\nolimits_{-}(R_{i})<\max(R_{i})<\min(R_{i+1}).\]
		So
		\[	\min \lbrace (h_{i+1}-h_{i})^{(r)}A_{i+1} \rbrace+ \max\nolimits_{-}(R_{i})=\max(R_{i}).\]
		This implies that
		\begin{equation}\label{inverse thm eq I}
			\min \{ (h_{i+1}-h_{i})^{(r)}A_{i+1} \}=\max(R_{i})-\max\nolimits_{-}(R_{i}) = a_{s+1}- a_{s}=a_{2}-a_{1} \ \text{for some $s$}.
		\end{equation}
		Consider the following cases:
		\begin{enumerate}
			\item[\upshape(a)] Let $i \in [1,t-1]$ be such that $ \epsilon_{i} = \epsilon_{i+1}$. Then $m_{i+1} > m_{i}$.
			If $m_{i+1}-m_{i} \geq 2$, then $ \min\{ (h_{i+1}-h_{i})^{(r)}A_{i+1} \} = ra_{1}+ \cdots +ra_{m_{i+1}-m_{i}} > a_{2} >a_{2}-a_{1}$, which contradicts (\ref{inverse thm eq I}). Hence $ m_{i+1}-m_{i} = 1$ and $ra_{1}=(h_{i+1}-h_{i})a_{1}=a_{2}-a_{1}$.
			
			\item[\upshape(b)] Let $i \in [1,t-1]$ be such that $ \epsilon_{i} < \epsilon_{i+1}$. Then $m_{i+1} \geq m_{i}$.
			If $m_{i+1}-m_{i} \geq 1$, then $ \min\{ (h_{i+1}-h_{i})^{(r)}A_{i+1} \} = ra_{1}+ \cdots +ra_{m_{i+1}-m_{i}} + (\epsilon_{i+1}-\epsilon_{i})a_{m_{i+1}-m_{i}+1} > a_{2} >a_{2}-a_{1}$, which contradicts (\ref{inverse thm eq I}). Hence $ m_{i+1}=m_{i} $ and $(\epsilon_{i+1}-\epsilon_{i})a_{1}=(h_{i+1}-h_{i})a_{1}=a_{2}-a_{1}$.
			
			\item[\upshape(c)]  Let $i \in [1,t-1]$ be such that $\epsilon_{i}>\epsilon_{i+1} $. Then $m_{i+1} > m_{i}$.
			If $m_{i+1}>m_{i}+1$, then $ \min\lbrace (h_{i+1}-h_{i})^{(r)}A_{i+1} \rbrace=ra_{1}+ \cdots +ra_{m_{i+1}-m_{i}-1} + (r+\epsilon_{i+1}- \epsilon_{i})a_{m_{i+1}-m_{i}}> a_{2} > a_{2} - a_{1}$, which contradicts (\ref{inverse thm eq I}).
			Hence $	m_{i+1}=m_{i}+1  \text{ and } (r+\epsilon_{i+1}- \epsilon_{i})a_{1}=(h_{i+1}-h_{i})a_{1}= a_{2} - a_{1}$.
			
		\end{enumerate}
		Hence, $(h_{i+1}-h_{i})a_{1}=a_{2}-a_{1}=d_{1}$ for each  $i=1,\ldots,t-1$. So $H$ is an arithmetic progresion with common difference $d\leq r$ and $d_{1}=da_{1}$.	This completes the proof.
	\end{proof}

	\begin{cor}\label{Corollary 3.1}
		Let $r \geq 1$ and   $t > t_{0} \geq 2$   be integers. Let $A$ be a nonempty finite set of $k \geq 6$ positive integers and $H = \lbrace h_{1}, h_{2}, \ldots, h_{t} \rbrace $  be a  set  of  $t$ positive integers with $h_{1} < h_{2} < \cdots < h_{t_{0}-1}  \leq (k-1)r-1<h_{t_{0}}<\cdots<h_{t} <kr $.
		If   $(t_{0},h_{1})\not=(2,1)$ and  $	\left|H^{(r)}A\right| = \mathcal{L}(H_{t_{0}-1}^{(r)}A) +t-t_{0}+2,$
		then $H$ is  an arithmetic progression with common difference $d\leq r$ and  $A$ is an arithmetic progression with common difference $d\ast \min(A)$, where $H_{t_{0}-1}=\{h_{1},h_{2},\ldots,h_{t_{0}-1}\}$.

	\end{cor}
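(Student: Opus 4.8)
The plan is to split $H$ at $t_{0}$, to apply the inverse theorems already available (Theorem~\ref{Inverse Theorem}, or Theorem~\ref{MistriPandey Theorem II} when $t_{0}=2$) to the lower part $H'=H_{t_{0}-1}=\{h_{1},\dots,h_{t_{0}-1}\}$, and then to pin down the overflowing elements $h_{t_{0}},\dots,h_{t}$ directly from the equality hypothesis. Write $A=\{a_{1}<\cdots<a_{k}\}$, $H=\{h_{1}<\cdots<h_{t}\}$ with $h_{0}=0$, and put $x_{i}=\max(h_{i}^{(r)}A)$. Since $h_{t_{0}-1}\le(k-1)r-1<h_{t_{0}}$ and $h_{t}<kr$, Remark~\ref{Remark 2.2}(a) applies, giving $x_{t_{0}-1}<\max\nolimits_{-}(h_{t_{0}}^{(r)}A)<x_{t_{0}}<\cdots<x_{t}$, where the $t-t_{0}+2$ elements $\max\nolimits_{-}(h_{t_{0}}^{(r)}A),x_{t_{0}},\dots,x_{t}$ all lie in $H^{(r)}A$ and exceed $\max((H')^{(r)}A)=x_{t_{0}-1}$. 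Hence $|H^{(r)}A|\ge|(H')^{(r)}A|+(t-t_{0}+2)\ge\mathcal{L}((H')^{(r)}A)+(t-t_{0}+2)$, and the hypothesis forces equality at both steps; so $|(H')^{(r)}A|=\mathcal{L}((H')^{(r)}A)$ and, crucially,
\begin{equation*}
H^{(r)}A=(H')^{(r)}A\ \cup\ \{\max\nolimits_{-}(h_{t_{0}}^{(r)}A)\}\ \cup\ \{x_{i}:t_{0}\le i\le t\},
\end{equation*}
a partition of $H^{(r)}A$ into $(H')^{(r)}A$ and $t-t_{0}+2$ strictly larger elements.

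I will first show $A$ is an arithmetic progression. If $t_{0}\ge3$, I apply Theorem~\ref{Inverse Theorem} to $H'$ when $r\le h_{t_{0}-1}$, and Theorem~\ref{Bhanja II} to $(H')^{(r)}A=(H')A$ when $r>h_{t_{0}-1}$; either way $H'$ is an arithmetic progression with some common difference $d\le r$ and $A$ is one with common difference $d\ast\min(A)$. If $t_{0}=2$, then $h_{1}\ge2$ since $(t_{0},h_{1})\ne(2,1)$, and Theorem~\ref{MistriPandey Theorem II} (or Theorem~\ref{Nathanson Theorem I} when $r>h_{1}$) gives that $A$ is an arithmetic progression. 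Let $d_{1}=a_{2}-a_{1}$ be its common difference, so $A=\{a_{1},a_{1}+d_{1},\dots,a_{1}+(k-1)d_{1}\}$ and $h^{(r)}A=ha_{1}+d_{1}\ast h^{(r)}[0,k-1]$, where $h^{(r)}[0,k-1]$ is a full interval $[L_{h},M_{h}]$ of integers whose length, by a short computation from Theorem~\ref{MistriPandey Theorem I}, satisfies $M_{h+1}-M_{h}=k-\lfloor h/r\rfloor-1$. Consequently $x_{h}:=\max(h^{(r)}A)$ is strictly increasing, $\max\nolimits_{-}(h^{(r)}A)=x_{h}-d_{1}$, and $x_{h'}-x_{h}=(h'-h)a_{1}$ whenever $(k-1)r\le h\le h'\le kr-1$.

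Now I feed the partition back in. The third largest element of $h_{t_{0}}^{(r)}A$ is $x_{h_{t_{0}}}-2d_{1}$ and lies in $(H')^{(r)}A$, hence is $\le x_{h_{t_{0}-1}}$; summing the increment formula over $h\in[h_{t_{0}-1},h_{t_{0}}-1]$, where each term with $\lfloor h/r\rfloor\le k-2$ is at least $a_{1}+d_{1}$, this forces $h_{t_{0}-1}=(k-1)r-1$, whence $x_{h_{t_{0}}}-x_{h_{t_{0}-1}}=d_{1}+(h_{t_{0}}-h_{t_{0}-1})a_{1}\le2d_{1}$, i.e.\ $1\le h_{t_{0}}-h_{t_{0}-1}\le d_{1}/a_{1}$. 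For each $i\in[t_{0}+1,t]$ the element $\max\nolimits_{-}(h_{i}^{(r)}A)=x_{h_{i}}-d_{1}$ again exceeds $x_{t_{0}-1}$, so it lies in $\{x_{h_{t_{0}}}-d_{1},x_{h_{t_{0}}},\dots,x_{h_{t}}\}$; being smaller than $x_{h_{i}}$, a short induction starting at $i=t_{0}+1$ gives $x_{h_{i}}-x_{h_{i-1}}=d_{1}$, and since here $h_{i-1}\ge h_{t_{0}}\ge(k-1)r$ this yields $(h_{i}-h_{i-1})a_{1}=d_{1}$. Thus $d:=d_{1}/a_{1}$ is a positive integer (equal to the common difference of $H'$ when $t_{0}\ge3$), and $h_{i}-h_{i-1}=d$ for all $t_{0}+1\le i\le t$.

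It remains to show $h_{t_{0}}-h_{t_{0}-1}=d$; this is the crux. After deleting its two largest elements, $h_{t_{0}}^{(r)}A$ still has at least $k-2\ge4$ elements, all of which lie in $(H')^{(r)}A$ by the partition. But every element of $h_{t_{0}}^{(r)}A$ is congruent to $h_{t_{0}}a_{1}$ modulo $d_{1}$, whereas $(H')^{(r)}A$ is contained in the residue class of $h_{1}a_{1}$ modulo $d_{1}$ (trivially if $t_{0}=2$; when $t_{0}\ge3$ because $h_{i}a_{1}\equiv h_{1}a_{1}\pmod{d_{1}}$ along the progression $H'$). Hence $h_{t_{0}}a_{1}\equiv h_{1}a_{1}\pmod{d_{1}}$, i.e.\ $d\mid h_{t_{0}}-h_{1}$; since $h_{1}\equiv h_{t_{0}-1}\pmod{d}$, this gives $d\mid h_{t_{0}}-h_{t_{0}-1}$, and with $1\le h_{t_{0}}-h_{t_{0}-1}\le d$ we conclude $h_{t_{0}}-h_{t_{0}-1}=d$. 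Therefore every consecutive gap of $H$ equals $d$, so $H$ is an arithmetic progression with common difference $d$, and $A$ has common difference $d_{1}=da_{1}=d\ast\min(A)$; and $d\le r$ (for $t_{0}\ge3$ this was obtained above, while for $t_{0}=2$ all of $h_{2},\dots,h_{t}$ lie in $[(k-1)r,kr-1]$, so $(t-2)d=h_{t}-h_{2}\le r-1$ with $t-2\ge1$). I expect the residue argument of this last step to be the main obstacle: one must realise that, below their tops, the large sumsets $h_{t_{0}}^{(r)}A,\dots,h_{t}^{(r)}A$ are forced inside $(H')^{(r)}A$ and hence cannot escape the progression $h_{1}a_{1}+d_{1}\,\mathbb{Z}$; everything else is routine bookkeeping with the explicit value of $\max(h^{(r)}A)$.
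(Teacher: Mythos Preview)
Your proof is correct and shares the paper's overall architecture: split $H$ at $t_{0}$, force $|(H')^{(r)}A|=\mathcal{L}((H')^{(r)}A)$ from the exact partition of $H^{(r)}A$, apply Theorem~\ref{Inverse Theorem} (or Theorem~\ref{MistriPandey Theorem II} when $t_{0}=2$) to $H'$, and then control the gaps $h_{i}-h_{i-1}$ for $i\ge t_{0}$. The tactics diverge at the crux. The paper constructs explicit elements of $h_{t_{0}}^{(r)}A$ strictly between $\max(h_{t_{0}-1}^{(r)}A)$ and $\max\nolimits_{-}(h_{t_{0}}^{(r)}A)$ to force first $m_{t_{0}-1}=k-2$, $\epsilon_{t_{0}-1}=r-1$, and then $(\epsilon_{t_{0}}+1)a_{1}=a_{3}-a_{2}=d_{1}$, whence $h_{t_{0}}-h_{t_{0}-1}=d$. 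You instead read off $h_{t_{0}-1}=(k-1)r-1$ directly from the increment formula for $x_{h}=\max(h^{(r)}A)$, and close the step $h_{t_{0}}-h_{t_{0}-1}=d$ by a residue argument: all but the top two elements of $h_{t_{0}}^{(r)}A$ are trapped inside $(H')^{(r)}A\subseteq h_{1}a_{1}+d_{1}\mathbb{Z}$, so $d\mid(h_{t_{0}}-h_{1})$ and hence $d\mid(h_{t_{0}}-h_{t_{0}-1})$. Your route is a touch more conceptual and avoids the somewhat ad~hoc element constructions; you also explicitly treat the boundary case $r>\max(H')$ via Theorem~\ref{Bhanja II}, which the paper's direct appeal to Theorem~\ref{Inverse Theorem} does not strictly cover.
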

	\begin{proof}
		Note that
		\[\max(H_{t_{0}-1}^{(r)}A) = \max(h_{t_{0}-1}^{(r)}A)<\max\nolimits_{-}(h_{t_{0}}^{(r)}A)<\max(h_{t_{0}}^{(r)}A)<\max(h_{t_{0}+1}^{(r)}A)<\cdots<\max(h_{t}^{(r)}A)\]  and  \[H^{(r)}A \supseteq H_{t_{0}-1}^{(r)}A \cup \{\max\nolimits_{-}(h_{t_{0}}^{(r)}A)\}\bigcup \{\max (h_{i}^{(r)}A) : i=t_{0}, \ldots, t\}.\]
		Therefore $\left|H_{t_{0}-1}^{(r)}A\right|=\mathcal{L}(H_{t_{0}-1}^{(r)}A)$. If $t_{0} \geq 3$, then by Theorem \ref{Inverse Theorem},  $H_{t_{0}-1}$ is  an arithmetic progression with common difference $d$ and  $A$ is an arithmetic progression with common difference $d\ast \min(A)$. Since $(t_{0},h_{1}) \ne (2,1)$, so if $t_{0}=2$, then $h_{1}>1$. So by Theorem \ref{MistriPandey Theorem II},  $A$ is an arithmetic progression.
		
		\textbf{Claim.} If $t_{0} \geq 2$,  $t\geq t_{0}+1 $, and $A$ is an arithmetic progression with common difference $d_{1}$, then
		\begin{enumerate}
			\item $\epsilon_{t_{0}} < \epsilon_{t_{0}-1}$,
			\item $m_{t_{0}-1}=k-2$,
			\item $h_{i}-h_{i-1}=d$ for $i=t_{0}, \ldots, t$ and the common difference of $A$ is $d_{1}=da_{1}$.
		\end{enumerate}
		Now we prove our claim. Note that $m_{t_{0}-1}r + \epsilon_{t_{0}-1} = h_{t_{0}-1} \leq (k-1)r -1 = (k-2)r + r-1$. Hence $m_{t_{0}-1} \leq k-2$. Also $h_{t_{0}} \geq (k-1)r$ and $h_{t_{0}} < h_{t} \leq kr-1$, i.e., $h_{t_{0}} \leq kr-2 = (k-1)r + r-2$. Thus
		$(k-1)r \leq h_{t_{0}} \leq (k-1)r + r-2$.
		Hence $m_{t_{0}} = k-1$ and $0 \leq \epsilon_{t_{0}} \leq r-2$. Note also that
		\[\max(h_{t_{0}}^{(r)}A) = \epsilon_{t_{0}}a_{1} + ra_{2} + \cdots + ra_{k},\]
		\[\max\nolimits_{-}h_{t_{0}}^{(r)}A = (\epsilon_{t_{0}} + 1)a_{1} + (r-1)a_{2} + \cdots + ra_{k}.\]
			
		\noindent(1) If $\epsilon_{t_{0}} \geq \epsilon_{t_{0}-1}$, then
		\[\max(h_{t_{0}-1}^{(r)}A)< y = ra_{1}+\cdots + ra_{k-m_{t_{0}-1}-1}+\epsilon_{t_{0}}a_{k-m_{t_{0}-1}}+ra_{k-(m_{t_{0}-1}-1)}+\cdots+ra_{k}<\max\nolimits_{-}(h_{t_{0}}^{(r)}A), \] and $y\in h_{t_{0}}^{(r)}A $, which is a contradiction. Hence $\epsilon_{t_{0}} < \epsilon_{t_{0}-1}$. \\
		\noindent (2)  If  $m_{t_{0}-1}\leq k-3$, then
		\begin{multline*}
			\max(h_{t_{0}-1}^{(r)}A)\\<ra_{1}+\cdots + ra_{k-m_{t_{0}-1}-2}+(r-(\epsilon_{t_{0}-1}-\epsilon_{t_{0}}))a_{k-m_{t_{0}-1}-1}+\epsilon_{t_{0}-1}a_{k-m_{t_{0}-1}}+ra_{k-m_{t_{0}-1}+1}+\cdots+ra_{k}\\<\max\nolimits_{-}(h_{t_{0}}^{(r)}A),
		\end{multline*}
		which is a contradiction. Hence $\epsilon_{t_{0}} < \epsilon_{t_{0}-1}$ and  $m_{t_{0}-1}=k-2$.
		Consequently,  we can write \[\max(h_{t_{0}-1}^{(r)}A)<(r-(\epsilon_{t_{0}-1}-\epsilon_{t_{0}}))a_{1}+\epsilon_{t_{0}-1}a_{2}+ra_{3}+\cdots+ra_{k}<\max(h_{t_{0}}^{(r)}A). \]  But we already know that \[\max(h_{t_{0}-1}^{(r)}A)<\max\nolimits_{-}(h_{t_{0}}^{(r)}A)<\max(h_{t_{0}}^{(r)}A). \]
		This implies that \[(r-(\epsilon_{t_{0}-1}-\epsilon_{t_{0}}))a_{1}+\epsilon_{t_{0}-1}a_{2}+ra_{3}+\cdots+ra_{k}=\max\nolimits_{-}(h_{t_{0}}^{(r)}A), \]
		which gives $\epsilon_{t_{0}-1}=r-1$. Therefore $h_{t_{0}}-h_{t_{0}-1}=(k-1)r+\epsilon_{t_{0}}-(k-2)r-(r-1)=\epsilon_{t_{0}}+1$. Now we have
		\[\max\nolimits_{-}(h_{t_{0}-1}^{(r)}A)<(\epsilon_{t_{0}}+1)a_{1}+ ra_{2}+(r-1)a_{3}+ra_{4}+\cdots+ra_{k}<\max\nolimits_{-}(h_{t_{0}}^{(r)}A). \] We also have \[\max\nolimits_{-}(h_{t_{0}-1}^{(r)}A)<\max(h_{t_{0}-1}^{(r)}A)<\max\nolimits_{-}(h_{t_{0}}^{(r)}A). \] Therefore \[(\epsilon_{t_{0}}+1)a_{1}+ ra_{2}+(r-1)a_{3}+ra_{4}+\cdots+ra_{k}=\max(h_{t_{0}-1}^{(r)}A).\] This gives \[(\epsilon_{t_{0}}+1)a_{1}=a_{3}-a_{2}=d_{1}.\] This implies that $a_{1}$ divides $d_{1}$, so $d_{1}=da_{1}$ where $d=\epsilon_{t_{0}}+1$. Hence $ h_{t_{0}}-h_{t_{0}-1}=d$. Now we show that $h_{i}-h_{i-1}=d$ for $i=t_{0}+1, \ldots, t$.
		
		Note that $$\max\nolimits_{-}(h_{t_{0}}^{(r)}A)<\max\nolimits_{-}(h_{t_{0}+1}^{(r)}A)< \cdots <\max\nolimits_{-}(h_{t}^{(r)}A)<\max(h_{t}^{(r)}A).$$
		We already have $$\max\nolimits_{-}(h_{t_{0}}^{(r)}A)<\max(h_{t_{0}}^{(r)}A)<\max(h_{t_{0}+1}^{(r)}A) \cdots <\max(h_{t}^{(r)}A).$$ Therefore $$\max(h_{i}^{(r)}A)=\max\nolimits_{-}(h_{i+1}^{(r)}A),$$
		 which gives $(\epsilon_{i+1}-\epsilon_{i})a_{1}=a_{2}-a_{1}=d_{1}$ for $i=t_{0},t_{0}+1,\ldots,t-1$. Hence,  $H$ is an arithmetic progression with common difference $d$ and   $A$ is an arithmetic progression with common difference $da_{1}$.
		
	\end{proof}
	
	Now we discuss the case when $t=t_{0}$.

	\begin{cor}\label{Corollary 3.2}
		Let $r \geq 1$ and  $t  \geq 2$   be positive integers. Let $A$ be a nonempty finite set of $k \geq 6$ positive integers and $H = \lbrace h_{1}, h_{2}, \ldots, h_{t} \rbrace $  be a  set  of  $t$ positive integers with $h_{1} <  \cdots < h_{t-1}  \leq (k-1)r-1<h_{t} <kr $.
		If  $(t,h_{1}) \neq (2,1)$ and  $	|H^{(r)}A| = \mathcal{L}((H\setminus \{h_{t}\})^{(r)}A) +2,$
		then $H$ is  an arithmetic progression with common difference $d \leq r$ and  $A$ is an arithmetic progression with common difference $d\ast \min(A)$.

	\end{cor}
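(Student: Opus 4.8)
The plan is to peel off the single ``large'' element $h_t$ and reduce to Theorem \ref{Inverse Theorem} applied to $H' := H\setminus\{h_t\}=\{h_1,\dots,h_{t-1}\}$, then to glue $h_t$ back on by a direct analysis of the top of $h_t^{(r)}A$, exactly as in the Claim inside the proof of Corollary \ref{Corollary 3.1}. Write $h_i=m_ir+\epsilon_i$ with $0\le\epsilon_i\le r-1$; from $(k-1)r-1<h_t<kr$ I get $m_t=k-1$ and $0\le\epsilon_t\le r-1$, so the two largest elements of $h_t^{(r)}A$ are $\max(h_t^{(r)}A)=\epsilon_t a_1+r(a_2+\cdots+a_k)$ and $\max\nolimits_{-}(h_t^{(r)}A)=(\epsilon_t+1)a_1+(r-1)a_2+r(a_3+\cdots+a_k)$. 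As in Remark \ref{Remark 2.2}(a), using $h_{t-1}\le(k-1)r-1$ and $a_1>0$ one checks $\max(h_{t-1}^{(r)}A)<\max\nolimits_{-}(h_t^{(r)}A)<\max(h_t^{(r)}A)$; since every element of $H'^{(r)}A$ is at most $\max(h_{t-1}^{(r)}A)$ this gives $H^{(r)}A\supseteq H'^{(r)}A\sqcup\{\max\nolimits_{-}(h_t^{(r)}A),\max(h_t^{(r)}A)\}$, whence $|H^{(r)}A|\ge|H'^{(r)}A|+2$. Combining with the hypothesis $|H^{(r)}A|=\mathcal L(H'^{(r)}A)+2$ and the direct bound $|H'^{(r)}A|\ge\mathcal L(H'^{(r)}A)$ (Theorem \ref{Direct Theorem} when $t\ge3$, Theorem \ref{MistriPandey Theorem I} when $t=2$), I conclude $|H'^{(r)}A|=\mathcal L(H'^{(r)}A)$ and, by equality of cardinalities, the sharper statement $H^{(r)}A=H'^{(r)}A\sqcup\{\max\nolimits_{-}(h_t^{(r)}A),\max(h_t^{(r)}A)\}$. (The case $r>h_{t-1}$ is vacuous: there $H'^{(r)}A=H'A$, and the eventual conclusion $h_t-h_{t-1}\le r$ would contradict $h_t-h_{t-1}\ge(k-2)r+1$.)

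Next I would apply the inverse results to $H'$. If $t\ge3$ then $|H'|\ge2$ and $\max(H')=h_{t-1}\le(k-1)r-1$, so Theorem \ref{Inverse Theorem} yields that $H'$ is an arithmetic progression with common difference $d\le r$ and that $A$ is an arithmetic progression with common difference $d\ast\min(A)$. If $t=2$ then $H'=\{h_1\}$ with $h_1\ge2$ (because $(t,h_1)\ne(2,1)$), and Theorem \ref{MistriPandey Theorem II} (its exceptional triple $(4,2,1)$ being ruled out by $k\ge6$) shows that $A$ is an arithmetic progression. In either case I write $A=\min(A)+d_1\cdot[0,k-1]$.

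It then remains to show that $h_t$ extends $H'$ to an arithmetic progression. Put $m=m_{t-1}$ and $\epsilon=\epsilon_{t-1}$, so $m\le k-2$ and $\max(h_{t-1}^{(r)}A)=\epsilon a_{k-m}+r(a_{k-m+1}+\cdots+a_k)$. Using the \emph{exact} equality $H^{(r)}A=H'^{(r)}A\sqcup\{\max\nolimits_{-}(h_t^{(r)}A),\max(h_t^{(r)}A)\}$, every element of $h_t^{(r)}A$ other than its two largest must lie in $H'^{(r)}A$ and hence be $\le\max(h_{t-1}^{(r)}A)$; writing down explicit ``near-maximal'' elements of $h_t^{(r)}A$ then forces, in turn: $\epsilon_t<\epsilon$ (otherwise $r(a_1+\cdots+a_{k-m-1})+\epsilon_t a_{k-m}+r(a_{k-m+1}+\cdots+a_k)$ is a forbidden third element of $H^{(r)}A$); $m=k-2$ (otherwise a similar element is forbidden); $\epsilon=r-1$ (forcing $(r-(\epsilon-\epsilon_t))a_1+\epsilon a_2+r(a_3+\cdots+a_k)=\max\nolimits_{-}(h_t^{(r)}A)$, which collapses to $\epsilon=r-1$); hence $h_t-h_{t-1}=(k-1)r+\epsilon_t-((k-2)r+r-1)=\epsilon_t+1$; and finally matching the top of $h_{t-1}^{(r)}A$ with that of $h_t^{(r)}A$ gives $(\epsilon_t+1)a_1=a_3-a_2=d_1$, so $\min(A)\mid d_1$ and $d_1=(\epsilon_t+1)\ast\min(A)$ with $\epsilon_t+1\le r$. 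For $t\ge3$ this is consistent with $d_1=d\ast\min(A)$ from Theorem \ref{Inverse Theorem}, giving $d=\epsilon_t+1$, so $H=H'\cup\{h_t\}$ is an arithmetic progression with common difference $d\le r$; for $t=2$, $H=\{h_1,h_t\}$ is trivially an arithmetic progression with common difference $h_t-h_1=\epsilon_t+1\le r$ and $A$ has common difference $(h_t-h_1)\ast\min(A)$. The hard part is this last step: checking that each ``near-maximal'' element I write down is a genuine element of $h_t^{(r)}A$ lying strictly between $\max(h_{t-1}^{(r)}A)$ and $\max(h_t^{(r)}A)$, and handling the borderline subcase $\epsilon_t=r-1$, $m=k-2$, $d=r$, where several of the strict inequalities degenerate to equalities and one argues directly, via the third-largest element of $h_t^{(r)}A$, that $A$ is already the asserted progression.
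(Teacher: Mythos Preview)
Your plan is essentially the paper's own proof: peel off $h_t$, deduce $|H'^{(r)}A|=\mathcal L(H'^{(r)}A)$ together with the exact equality $H^{(r)}A=H'^{(r)}A\sqcup\{\max\nolimits_{-}(h_t^{(r)}A),\max(h_t^{(r)}A)\}$, apply Theorem~\ref{Inverse Theorem} (or Theorem~\ref{MistriPandey Theorem II} when $t=2$) to $H'$, and then force $\epsilon_t\le\epsilon_{t-1}$, $m_{t-1}=k-2$, $\epsilon_{t-1}=r-1$, $h_t-h_{t-1}=\epsilon_t+1$, and $(\epsilon_t+1)a_1=a_3-a_2$ via near-maximal elements of $h_t^{(r)}A$. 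That is exactly the sequence of claims the paper proves.

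One point to tighten: your dismissal of the case $r>h_{t-1}$ as ``vacuous because the eventual conclusion contradicts it'' is circular, since your near-maximal-element arguments tacitly use $m_{t-1}\ge1$. The paper handles this case head-on: it exhibits the element $(\epsilon_t+1)a_1+ra_2+(r-1)a_3+ra_4+\cdots+ra_k\in h_t^{(r)}A$ and checks it lies strictly between $\max(h_{t-1}^{(r)}A)=h_{t-1}a_k$ and $\max\nolimits_{-}(h_t^{(r)}A)$ whenever $h_{t-1}\le r$, yielding a forbidden third element. Also, your strict inequality ``$\epsilon_t<\epsilon$'' should read $\epsilon_t\le\epsilon_{t-1}$ (the paper separately rules out $\epsilon_t=\epsilon_{t-1}=r-1$ with $m_{t-1}<k-2$ by producing yet another intermediate element $z$); you flag this borderline but should treat it before, not after, asserting the strict inequality.
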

	\begin{proof}
		Note that
		\[\max((H\setminus \{h_{t}\})^{(r)}A) = \max(h_{t-1}^{(r)}A)<\max\nolimits_{-}(h_{t}^{(r)}A)<\max(h_{t}^{(r)}A)\]
		and
		\[H^{(r)}A \supseteq (H\setminus \{h_{t}\})^{(r)}A \cup \{\max\nolimits_{-}(h_{t}^{(r)}A), \max (h_{t}^{(r)}A) \}.\]
		Therefore $\left|H_{t-1}^{(r)}A\right|=\mathcal{L}((H\setminus \{h_{t}\})^{(r)}A)$. Also, if $t=2$ and $h_{1}>1$, then by Theorem \ref{MistriPandey Theorem II},  $A$ is an arithmetic progression.

		\textbf{Claim} If $t \geq 2$, then
		\begin{enumerate}
			\item $h_{t-1} > r$,
			\item $\epsilon_{t} \leq \epsilon_{t-1}$,
			\item $m_{t-1}=k-2$.
			
		\end{enumerate}
		Now we prove our claim.\\
		\noindent(1) If  $h_{t-1} \leq r$, then $\max(h^{(r)}_{t-1}A) = h_{t-1}a_{k}$. Note that \[h_{t-1}a_{k} < (\epsilon_{t}+1)a_{1} + ra_{2} + (r-1)a_{3} + \cdots + ra_{k} < (\epsilon_{t}+1)a_{1} + (r-1)a_{2} + \cdots + ra_{k} = \max\nolimits_{-}h_{t}^{(r)}A, \]
		which is a contradiction. Hence $h_{t-1} > r$ and  so $m_{t-1} \geq 1$.

		Note that $m_{t-1}r + \epsilon_{t-1} = h_{t-1} \leq (k-1)r -1 = (k-2)r + r-1$. Hence $m_{t-1} \leq k-2$. Also
		$(k-1)r \leq h_{t} \leq kr -1$.
		Hence $m_{t} = k-1$. Note also that
		\[\max(h_{t-1}^{(r)}A) = \epsilon_{t-1}a_{k-m_{t-1}} + ra_{k-m_{t-1}+1} + \cdots + ra_{k},\]
		\[\max(h_{t}^{(r)}A) = \epsilon_{t}a_{1} + ra_{2} + \cdots + ra_{k},\]
		\[\max\nolimits_{-}h_{t}^{(r)}A = (\epsilon_{t}+1)a_{1} + (r-1)a_{2} + \cdots + ra_{k}.\]
		\noindent(2) Let $\epsilon_{t} > \epsilon_{t-1}$. Then
		\begin{align*}
			\max(h_{t-1}^{(r)}A) & <x = ra_{1}+\cdots + ra_{k-m_{t-1}-1}+\epsilon_{t}a_{k-m_{t-1}}+ra_{k-(m_{t-1}-1)}+\cdots+ra_{k} \\
			& \leq y =  ra_{1} + \epsilon_{t}a_{2} + ra_{3} + \cdots + ra_{k}\\
			& \leq  (\epsilon_{t}+1)a_{1} + (r-1)a_{2} + \cdots + ra_{k} = \max\nolimits_{-}(h_{t}^{(r)}A),
		\end{align*}
		and $x, y\in h_{t}^{(r)}A $. If $x<y$ or $y< \max\nolimits_{-}(h_{t}^{(r)}A) $, then we get a contradiction. So we assume that $x = y = \max\nolimits_{-}(h_{t}^{(r)}A)$. This  implies that $\epsilon_{t} = r-1$ and $m_{t-1} = k-2$. Since $\epsilon_{t} > \epsilon_{t-1} $, we have $\epsilon_{t-1} \leq r-2$. Now consider $z = ra_{1} + ra_{2} + (r-1)a_{3} + ra_{4} + \cdots + ra_{k} \in h_{t}^{(r)}A $. Then we have $\max(h_{t-1}^{(r)}A) < z < \max\nolimits_{-}(h_{t}^{(r)}A) $, which is again a contradiction.  Hence $\epsilon_{t} \leq  \epsilon_{t-1}$. \\
		
		\noindent (3)  If  $m_{t-1}\leq k-3$, then
		\begin{multline*}
			\max(h_{t-1}^{(r)}A)\\<ra_{1}+\cdots + ra_{k-m_{t-1}-2}+(r-(\epsilon_{t-1}-\epsilon_{t}))a_{k-m_{t-1}-1}+\epsilon_{t-1}a_{k-m_{t-1}}+ra_{k-m_{t-1}+1}+\cdots+ra_{k}\\<\max\nolimits_{-}(h_{t}^{(r)}A),
		\end{multline*}
		which is a contradiction. Hence $\epsilon_{t} \leq \epsilon_{t-1}$ and  $m_{t-1}=k-2$.
		Consequently,  we can write \[\max(h_{t-1}^{(r)}A)<(r-(\epsilon_{t-1}-\epsilon_{t}))a_{1}+\epsilon_{t-1}a_{2}+ra_{3}+\cdots+ra_{k}<\max(h_{t}^{(r)}A). \]  But we already know \[\max(h_{t-1}^{(r)}A)<\max\nolimits_{-}(h_{t}^{(r)}A)<\max(h_{t}^{(r)}A). \] This implies \[(r-(\epsilon_{t-1}-\epsilon_{t}))a_{1}+\epsilon_{t-1}a_{2}+ra_{3}+\cdots+ra_{k}=\max\nolimits_{-}(h_{t}^{(r)}A). \] This gives $\epsilon_{t-1}=r-1$. Therefore $h_{t}-h_{t-1}=(k-1)r+\epsilon_{t}-(k-2)r-(r-1)=\epsilon_{t}+1$.
		 We have
		\[\max\nolimits_{-}(h_{t-1}^{(r)}A)<(\epsilon_{t}+1)a_{1}+ ra_{2}+(r-1)a_{3}+ra_{4}+\cdots+ra_{k}<\max\nolimits_{-}(h_{t}^{(r)}A). \]
		We also have \[\max\nolimits_{-}(h_{t-1}^{(r)}A)<\max(h_{t-1}^{(r)}A)<\max\nolimits_{-}(h_{t}^{(r)}A). \] Therefore \[(\epsilon_{t}+1)a_{1}+ ra_{2}+(r-1)a_{3}+ra_{4}+\cdots+ra_{k}=\max(h_{t-1}^{(r)}A).\] This gives
		\begin{equation}\label{Eq- x}
			(\epsilon_{t}+1)a_{1}=a_{3}-a_{2}.
		\end{equation}
		If $t \geq 3$, by Theorem \ref{Inverse Theorem},  $H\setminus \{h_{t}\}$ is  an arithmetic progression with common difference $d \leq r$ and  $A$ is an arithmetic progression with common difference $d\ast \min(A)$. Therefore
		\[(\epsilon_{t}+1)a_{1}=a_{3}-a_{2} = da_{1},\] which implies $h_{t} - h_{t-1} = \epsilon_{2} +1 = d$. Hence, if $t\geq 3$, $H$ is  an arithmetic progression with common difference $d \leq r$ and  $A$ is an arithmetic progression with common difference $d\ast \min(A)$.  If $t=2$, then $H = \{h_{1}, h_{2}\}$ is an arithmetic progression with common difference $d= h_{2}-h_{1}=\epsilon_{t} + 1 \leq r$. Since $h_{1}>1 $  and  $\left|H_{1}^{(r)}A\right|=\mathcal{L}((H\setminus \{h_{2}\})^{(r)}A) = |h_{1}^{(r)}A| = m_{1}r(k-m_{1}) + \epsilon_{1}(k-2m_{1}-1) + 1 $, we have from Theorem \ref{MistriPandey Theorem II} that  $A$ is an arithmetic progression with   common difference  $da_{1}$ from  (\ref{Eq- x}).

	\end{proof}

	\begin{cor}\label{Corollary 3.3}
		Let $r \geq 2$ be a positive integer and  $A$ be a nonempty finite set of $k \geq 6$ positive integers and $H$  be a  set  of  $t\geq 2$ positive integers with $ (k-1)r-1<\min(H)<\max(H) <kr $. Let $m_{1}=\lfloor \min(H)/r \rfloor$.
		If   \[|H^{(r)}A| = m_{1}r(k-m_{1})+(h_{1}-m_{1}r)(k-2m_{1}-1)+t,\]
		then $H$ is  an arithmetic progression with common difference $d \leq r-1$ and  $A$ is an arithmetic progression with common difference $d\ast \min(A)$.
	\end{cor}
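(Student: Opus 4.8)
The plan is to convert $H^{(r)}A$ into an ordinary $G$-fold sumset $GA$ via a complementation trick, and then apply Bhanja's inverse theorem (Theorem \ref{Bhanja II}) essentially verbatim. First I would unwind the hypotheses on $H$. Writing $h_i = m_i r + \epsilon_i$ with $0 \le \epsilon_i \le r-1$, the chain $(k-1)r-1 < h_1 < h_2 < \cdots < h_t < kr$ forces $(k-1)r \le h_i \le kr-1$ for every $i$, hence $m_i = k-1$ and $\epsilon_i = h_i - (k-1)r$ for all $i$, with $0 \le \epsilon_1 < \epsilon_2 < \cdots < \epsilon_t \le r-1$ (in particular $\epsilon_1 \le r-2$). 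A one-line computation then rewrites the hypothesised value as
\[ m_1 r(k-m_1) + (h_1 - m_1 r)(k-2m_1-1) + t = (k-1)(r-\epsilon_1) + t. \]

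Next I would set $\sigma = r(a_1 + \cdots + a_k)$ and establish the complementation identity $h_i^{(r)}A = \sigma - (r-\epsilon_i)^{(r)}A$ for each $i$: in a representation $\sum_j \lambda_j a_j$ of an element of $h_i^{(r)}A$ the substitution $\lambda_j \mapsto \mu_j = r - \lambda_j$ produces a tuple with $0 \le \mu_j \le r$ and $\sum_j \mu_j = kr - h_i = r - \epsilon_i$, and this correspondence is a bijection between the two families of representations. Since $r - \epsilon_i \le r$, the cap $\mu_j \le r$ is automatically satisfied, so $(r-\epsilon_i)^{(r)}A = (r-\epsilon_i)A$, the ordinary $(r-\epsilon_i)$-fold sumset. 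Taking the union over $i$ gives $H^{(r)}A = \sigma - GA$ with $G = \{\,r-\epsilon_i : 1 \le i \le t\,\}$, a set of exactly $t$ positive integers whose largest element is $r - \epsilon_1$; in particular $|H^{(r)}A| = |GA|$.

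With this in hand the hypothesis reads $|GA| = (r-\epsilon_1)(k-1) + t = \max(G)(k-1) + |G|$, which is exactly the optimal lower bound for $|GA|$ from Theorem \ref{Bhanja I}. Since $A$ has $k \ge 6 \ge 2$ positive integers and $G$ has $t \ge 2$ positive integers, Theorem \ref{Bhanja II} applies: $G$ is an arithmetic progression with some common difference $d$, and $A$ is an arithmetic progression with common difference $d \ast \min(A)$. Finally I would push the statement about $G$ back to $H$: listing $G$ in increasing order as $g_j = r - \epsilon_{t-j+1}$, the condition $g_{j+1} - g_j = d$ is the same as $\epsilon_{i+1} - \epsilon_i = d$ for all $i$, i.e.\ $h_{i+1} - h_i = d$, so $H$ is an arithmetic progression with common difference $d$; moreover $(t-1)d = g_t - g_1 = \epsilon_t - \epsilon_1 \le r-1$, and since $t \ge 2$ this gives $d \le r-1$.

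Once the complementation is set up the rest is bookkeeping, so I do not expect a genuine obstacle. The only point requiring care is the complementation identity itself — specifically, that $\lambda \mapsto r - \lambda$ is a true bijection between the representations (so that one obtains the equality $|H^{(r)}A| = |GA|$ and not merely an inequality), and that the order-reversing map $\epsilon_i \mapsto r - \epsilon_i$ yields a set $G$ with exactly $t$ elements — which is where the strict inequalities $\epsilon_1 < \cdots < \epsilon_t$ forced by the hypotheses on $H$ are used.
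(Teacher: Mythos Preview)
Your proof is correct, and the complementation idea is clean. It is, however, a genuinely different route from the paper's argument.

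The paper proceeds directly inside $H^{(r)}A$: from the inclusion $H^{(r)}A \supseteq h_{1}^{(r)}A \cup \{\max(h_{i}^{(r)}A):2\le i\le t\}$ it forces $|h_{1}^{(r)}A|$ to attain the Mistri--Pandey lower bound and invokes Theorem~\ref{MistriPandey Theorem II} to conclude that $A$ is an arithmetic progression; then, exploiting a second chain $\max_{-}(h_{1}^{(r)}A)<\cdots<\max_{-}(h_{t}^{(r)}A)<\max(h_{t}^{(r)}A)$, it squeezes out the equalities $\max(h_{i}^{(r)}A)=\max_{-}(h_{i+1}^{(r)}A)$, which unwind to $(\epsilon_{i+1}-\epsilon_{i})a_{1}=a_{2}-a_{1}$ and give the structure of $H$. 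Your approach bypasses both the Mistri--Pandey inverse theorem and this chain argument by transporting the whole problem, via $x\mapsto \sigma-x$, to an ordinary sumset $GA$ with $G=\{r-\epsilon_{i}\}$, so that Theorem~\ref{Bhanja II} does all the work at once. Your reduction is shorter and more conceptual, and it makes transparent why the bound $d\le r-1$ emerges (it is just $\max(G)-\min(G)\le r-1$); the paper's argument, on the other hand, is self-contained within the $h^{(r)}$-machinery built in the preceding sections and does not require recognising the complementation symmetry.
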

	\begin{proof}
		Note that \[\max(h_{2}^{(r)}A)<\max(h_{3}^{(r)}A)<\cdots<\max(h_{t}^{(r)}A)\] and
		\[H^{(r)}A \supseteq h_{1}^{(r)}A \cup \{\max(h_{i}^{(r)}A) : 2\leq i \leq t\}.\] Therefore $\left|h_{1}^{(r)}A\right|=m_{1}r(k-m_{1})+(h_{1}-m_{1}r)(k-2m_{1}-1)+1$ and by  Theorem \ref{MistriPandey Theorem II},  $A$ is an arithmetic progression. Assume $d_{1}$ is the common difference of $A$.
		Note that
		\[\max\nolimits_{-}(h_{1}^{(r)}A)<\max\nolimits_{-}(h_{2}^{(r)}A)< \cdots <\max\nolimits_{-}(h_{t}^{(r)}A)<\max(h_{t}^{(r)}A).\]
		We already have
		\[\max\nolimits_{-}(h_{1}^{(r)}A)<\max(h_{1}^{(r)}A)<\max(h_{2}^{(r)}A) \cdots <\max(h_{t}^{(r)}A).\]
		Therefore
		\[\max(h_{i}^{(r)}A)=\max\nolimits_{-}(h_{i+1}^{(r)}A),\]
		which gives $(\epsilon_{i+1}-\epsilon_{i})a_{1}=a_{2}-a_{1}=d_{1}$ for $i=1,2,\ldots,t-1$. Hence, set $H$ is an arithmetic progression with common difference $d \leq r-1$ and  set $A$ is an arithmetic progression with common difference $d\ast \min(A)$.
	\end{proof}

	\begin{cor}\label{Corollary 3.4}
		Let  $r$ be a positive  integer, $A$ be a  finite set of $k\geq 7$ nonnegative  integers with $0 \in A$, and $H$  be a  set  of  $t \geq 2$ positive integers with $1 \leq r \leq  \max(H) \leq (k-2)r-1 $. Let $ m = \lceil \min(H)/r \rceil$ and $ m_{1}=\lfloor \min(H)/r  \rfloor$.  If
		\begin{equation}\label{Cor 3.4-Eq-1}
			|H^{(r)}A| =    m_{1}r(m-m_{1}+1) + (\min(H)-m_{1}r)(m-2m_{1}) + \mathcal{L}\Big(H^{(r)}(A\setminus\{0\}) \Big),
		\end{equation}
		then  $H$ is an arithmetic progression with common difference $d \leq r$ and   $A$ is an arithmetic progression with common difference $d\ast\min(A\setminus\{0\})$.  Moreover, if $\min(H)>1$, then $d=1$.
		
	\end{cor}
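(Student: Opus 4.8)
The plan is to reverse the inequality chain used to prove Corollary \ref{Corollary 2.1}: the equality hypothesis forces every step there to be tight, which rigidifies $A\setminus\{0\}$ through Theorem \ref{Inverse Theorem} and then $A$ through a short divisibility argument. Write $A=\{0,a_{1},\dots,a_{k-1}\}$ with $0<a_{1}<\cdots<a_{k-1}$, and set $A'=A\setminus\{0\}$, $h_{1}=\min(H)$, $m=\lceil h_{1}/r\rceil$, $B=\{0,a_{1},\dots,a_{m}\}$. Exactly as in the proof of Corollary \ref{Corollary 2.1}, $h_{1}^{(r)}B\cup H^{(r)}A'\subseteq H^{(r)}A$ with $h_{1}^{(r)}B\cap H^{(r)}A'=\{\max(h_{1}^{(r)}B)\}=\{\min(H^{(r)}A')\}$, so
\[
\left|H^{(r)}A\right|\ \ge\ \left|h_{1}^{(r)}B\right|+\left|H^{(r)}A'\right|-1 .
\]
Bounding $\left|h_{1}^{(r)}B\right|$ below by Theorem \ref{MistriPandey Theorem I} (the bound holds trivially when $h_{1}<r$, by direct count) and $\left|H^{(r)}A'\right|$ below by Theorem \ref{Direct Theorem} applied to $A'$ (legitimate since $|A'|=k-1\ge 6$ and $1\le r\le\max(H)\le(k-2)r-1=(|A'|-1)r-1$) makes the resulting lower bound equal to the right-hand side of (\ref{Cor 3.4-Eq-1}). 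Hence the hypothesis forces equality everywhere, giving three facts: (i) $\left|H^{(r)}A'\right|=\mathcal{L}(H^{(r)}A')$; (ii) $\left|h_{1}^{(r)}B\right|$ attains the lower bound of Theorem \ref{MistriPandey Theorem I}; and (iii) $H^{(r)}A=h_{1}^{(r)}B\cup H^{(r)}A'$.

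Next I would apply Theorem \ref{Inverse Theorem} to the set $A'$ of $k-1\ge 6$ positive integers together with the given $H$: by (i) this yields that $H$ is an arithmetic progression with common difference $d\le r$ and that $A'$ is an arithmetic progression with common difference $d a_{1}$, so $a_{i}=a_{1}\bigl(1+(i-1)d\bigr)$ for $1\le i\le k-1$. It then suffices to prove $d=1$, since then $A=a_{1}\cdot[0,k-1]$ is an arithmetic progression with common difference $a_{1}=d\ast\min(A\setminus\{0\})$, which is both the asserted conclusion and the ``moreover'' clause. We may assume $r\ge 2$, since otherwise $d\le r=1$ already.

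If $m\ge 2$, then $|B|=m+1\ge 3$, and since $h_{1}>r\ge 2$ the inequalities $2\le h_{1}\le mr\le(m+1)r-2=|B|r-2$ hold with $(|B|,h_{1},r)\ne(4,2,1)$; so (ii) and Theorem \ref{MistriPandey Theorem II} give that $B$ is an arithmetic progression, and as $0\in B$ this forces $a_{i}=ia_{1}$, in particular $a_{2}=2a_{1}$, hence $d=1$. If $m=1$, then $h_{1}\le r$, $B=\{0,a_{1}\}$, and $\max(h_{1}^{(r)}B)=h_{1}a_{1}=\min(H^{(r)}A')$, so by (iii) every element of $H^{(r)}A$ lies in $\{0,a_{1},\dots,h_{1}a_{1}\}\cup H^{(r)}A'$. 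Here I would choose $s=h_{1}-1$ if $h_{1}\ge 2$ and $s=2$ if $h_{1}=1$; then $s^{(r)}A'\subseteq H^{(r)}A$ (by appending one copy of $0$ to reach $h_{1}\in H$ in the first case, or $h_{2}-s=d-1\le r$ copies of $0$ to reach $h_{2}=h_{1}+d\in H$ in the second), and the element $s\,a_{k-1}\in s^{(r)}A'$ (legitimate as $s\le r$) satisfies $s\,a_{k-1}\ge s(k-1)a_{1}>h_{1}a_{1}$ because $k\ge 7$. By (iii) this element must lie in $H^{(r)}A'$; but every element of $H^{(r)}A'$ is a sum of $h$ elements of $A'$ for some $h\in H$, hence $\equiv h\,a_{1}\equiv h_{1}a_{1}\pmod{d a_{1}}$, while $s\,a_{k-1}\equiv s\,a_{1}\pmod{d a_{1}}$. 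Thus $d a_{1}$ divides $(s-h_{1})a_{1}=\pm a_{1}$, forcing $d=1$.

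The routine work is the verification of the hypotheses of Theorems \ref{MistriPandey Theorem I}, \ref{MistriPandey Theorem II}, \ref{Direct Theorem} and \ref{Inverse Theorem} in the reduction, and checking that $h_{1}^{(r)}B\cap H^{(r)}A'$ is really the single point $\max(h_{1}^{(r)}B)=\min(H^{(r)}A')$ (because every element of $h_{1}^{(r)}B$ is at most that value and every element of $H^{(r)}A'$ at least it), which is what turns equality in the chain into (iii). The part I expect to be trickiest is the $m=1$ case: one must choose the auxiliary index ($h_{1}$, or $h_{1}+d$ when $\min(H)=1$) so that $s^{(r)}A'$ genuinely sits inside some $h^{(r)}A$ with $h\in H$, while simultaneously keeping $s\not\equiv h_{1}\pmod d$ for $d\ge 2$ and making sure the witnessing element overshoots the finite block $\{0,a_{1},\dots,h_{1}a_{1}\}$ so that (iii) pins it into $H^{(r)}A'$.
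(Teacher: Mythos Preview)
Your reduction step---reversing the inequality chain of Corollary~\ref{Corollary 2.1} to force $|H^{(r)}A'|=\mathcal{L}(H^{(r)}A')$ and $|h_1^{(r)}B|$ minimal, then invoking Theorem~\ref{Inverse Theorem} on $A'$---is exactly what the paper does. The divergence is in how you pin down $d=1$. The paper splits according to whether $m_1\ge1$ and $\epsilon_1\ge1$ (then applies Theorem~\ref{MistriPandey Theorem II} to $B$) versus $\epsilon_1=0$ or $h_1<r$ (then argues that $|h_1^{(r)}A|$ itself hits the Mistri--Pandey bound and applies Theorem~\ref{MistriPandey Theorem II} to the full set $A$). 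You instead split on $m$: for $m\ge2$ you apply Theorem~\ref{MistriPandey Theorem II} to $B$ uniformly (this absorbs the paper's first case together with the sub-case $\epsilon_1=0,\ m_1\ge2$ of its second), while for $m=1$ you exploit fact~(iii) and a congruence modulo $da_1$ on the witness $s\,a_{k-1}$. Your congruence route is more elementary---it sidesteps the computation of $|h_1^{(r)}A|$---and it also handles $h_1=1$, a case the paper's proof explicitly sets aside (the paper only argues $d=1$ under the extra hypothesis $h_1>1$, so the conclusion ``$A$ is an arithmetic progression'' is not verified there). What you lose is the paper's structural explanation of \emph{why} $A$ is rigid in the small-$m$ regime (namely that $h_1^{(r)}A$ already has extremal size), trading it for a quicker arithmetic obstruction.
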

	
	\begin{proof}
		Let $A=\{0,a_{1},\ldots,a_{k-1}\}$ be a set of nonnegative integers with $0<a_{1}< \cdots< a_{k-1}$ and $H = \{h_{1}, h_{2}, \ldots, h_{t}\}$ be set of positive integer  such that $h_{1} < h_{2} < \cdots < h_{t}$ .  From (\ref{Cor 3.4-Eq-1}) and  Corollary \ref{Corollary 2.1}, we have \[\left|h^{(r)}_{1}B\right|= m_{1}r(m-m_{1}+1) + (h_{1}-m_{1}r)(m-2m_{1})+1\] and \[\left|H^{(r)}A^{\prime} \right| =   \mathcal{L}\Big(H^{(r)}(A^{\prime}) \Big)  , \]
		where $A^{\prime}=\{a_{1},a_{2},\ldots,a_{k-1}\}$ and	 $ B=\{0,a_{1}, \ldots, a_{m}\}$ with  $m=\lceil h_{1}/r \rceil$. Then by Theorem \ref{Inverse Theorem},  $H$ is  an arithmetic progression with common difference $d \leq r$ and  $A^{\prime}$ is an arithmetic progression with common difference $d\ast \min(A^{\prime})$. Now, we show that $d=1$, if $h_{1}>1$. To show $d=1$, it is sufficient to prove that common difference of arithmetic progression $A$ is $a_{1}$. If  $r=1$, then $d=1$. Assume  $r\geq 2$. Now, define $R_{i} =S_{i} \cup T_{i}$  for the set $A^{\prime}$ as it was  defined in Theorem \ref{Direct Theorem}. So
		\begin{equation*}
			R_{1}=S_{1}=h^{(r)}_{1}A^{\prime} \subseteq h^{(r)}_{1}A.
		\end{equation*}
		Now $\max(h^{(r)}_{1}A^{\prime})=\max(h^{(r)}_{1}A)$ implies that  $h^{(r)}_{1}A \cap R_{2}=\emptyset$.  We write
		\begin{multline}\label{bound}
			\left|H^{(r)}A\right|
			=  m_{1}r(m-m_{1}+1) + (h_{1}-m_{1}r)(m-2m_{1})+ \left|h_{1}^{(r)}A^{\prime}\right|  \\ + \sum_{i=2}^{t} \Big( r(m_{i}-m_{i-1})(k-m_{i}-1)+(\epsilon_{i}-\epsilon_{i-1})(k-m_{i}-2) \\   -(\max\{\epsilon_{i},\epsilon_{i-1}\})(m_{i}-m_{i-1})+1\Big).
		\end{multline}
		$\clubsuit$ If $h_{1} = m_{1}r + \epsilon_{1}$ with $m_{1} \geq 1$ and $\epsilon_{1} \geq 1$, then $m = m_{1} + 1$ and so $\left|B\right| \geq 3$. Since $\left|h^{(r)}_{1}B\right|= m_{1}r(m-m_{1}+1) + (h_{1}-m_{1}r)(m-2m_{1})+1$, therefore  Theorem \ref{MistriPandey Theorem II} implies that $B$ is an arithmetic progression with common difference $a_{1}$ and we know that $A^{\prime}$ is also an arithmetic progression. Hence $A = B \cup A^{\prime}$ is an arithmetic progression with common difference  $a_{1}$.\\
		$\clubsuit$ If $h_{1} = m_{1}r$, then $m_{1} = m$, and if  $h_{1} < r$, then $m_{1} = 0$ and $m =1$. Since $h^{(r)}_{1}B \cup h_{1}^{(r)}A^{\prime} \subseteq h_{1}^{(r)}A $, so from (\ref{bound}), we have 
		\begin{align*}
			\left|h_{1}^{(r)}A \right| &= \left|h^{(r)}_{1}B \right| + \left| h_{1}^{(r)}A^{\prime} \right|-1\\ &=m_{1}r(m-m_{1}+1) + (h_{1}-m_{1}r)(m-2m_{1})+m_{1}r(k-m_{1}-1)\\ &~~~~~~~~~~~~~~~~+(h_{1}-m_{1}r)(k-2m_{1}-2)+1\\
			&\leq m_{1}r+(h_{1}-m_{1}r)+	m_{1}r(k-m_{1}-1)+(h_{1}-m_{1}r)(k-2m_{1}-2)+1\\ &=m_{1}r(k-m_{1})+(h_{1}-m_{1}r)(k-2m_{1}-1)+1.
		\end{align*}
		This gives
		\[\left|h_{1}^{(r)}A\right|=  m_{1}r(k-m_{1})+(h_{1}-m_{1}r)(k-m_{1}-1)+1. \]
		So, by Theorem \ref{MistriPandey Theorem II}, $A$ is an arithmetic progression with common difference $a_{1}$.  This implies that $d=1$. Hence, \[H=h_{1}+[0,t-1] \text{ and } A= \min(A\setminus \{0\})\ast [0,k-1].\]
		
	\end{proof}
	As a consequence of Corollary \ref{Corollary 3.1}, Corollary \ref{Corollary 3.2}, Corollary \ref{Corollary 3.3}  and Corollary \ref{Corollary 3.4}, we have the following Corollaries.
	\begin{cor}\label{Corollary 3.5}
		Let $r \geq 1$ and  $t > t_{0} \geq 2$   be integers. Let $A$ be a nonempty finite set of $k \geq 7$ nonnegative integers with $0 \in A$ and $H = \lbrace h_{1}, h_{2}, \ldots, h_{t} \rbrace $  be a  set  of  $t$ positive integers with $h_{1} < h_{2} < \cdots < h_{t_{0}-1}  \leq (k-2)r-1<h_{t_{0}}<\cdots<h_{t} <(k-1)r $.
		If   $(t_{0},h_{1})\not=(2,1)$ and  $\left|H^{(r)}A\right| \geq    m_{1}r(m-m_{1}+1) + (\min(H)-m_{1}r)(m-2m_{1}) + \mathcal{L}\Big(H^{(r)}(A\setminus\{0\}) \Big) +  t-t_{0}+2$,
		then   $H$ is an arithmetic progression with common difference $d \leq r$ and  $A$ is an arithmetic progression with common difference $d\ast\min(A\setminus\{0\})$.  Moreover, if $\min(H)>1$, then $d=1$.
	\end{cor}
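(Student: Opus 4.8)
The plan is to mirror the proofs of Corollary~\ref{Corollary 3.4} and Corollary~\ref{Corollary 3.1}, reducing the statement to the positive-integer case already treated. Write $A=\{0,a_{1},\dots,a_{k-1}\}$ with $0<a_{1}<\cdots<a_{k-1}$, put $A'=A\setminus\{0\}$ (a set of $k-1\ge 6$ positive integers), $H_{t_{0}-1}=\{h_{1},\dots,h_{t_{0}-1}\}$, $m=\lceil h_{1}/r\rceil$, and $B=\{0,a_{1},\dots,a_{m}\}$. The hypotheses on $H$ say exactly that $(A',H)$ falls under Corollary~\ref{Corollary 3.1} with $k$ replaced by $k-1$: indeed $h_{1}<\cdots<h_{t_{0}-1}\le(k-2)r-1=\big((k-1)-1\big)r-1<h_{t_{0}}<\cdots<h_{t}<(k-1)r$, $t>t_{0}\ge 2$, $k-1\ge 6$, and $(t_{0},h_{1})\ne(2,1)$. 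Here I read $\mathcal{L}\big(H^{(r)}(A\setminus\{0\})\big)$ in the sense of Remark~\ref{Remark 2.3}(a), i.e.\ as $\mathcal{L}\big(H_{t_{0}-1}^{(r)}(A\setminus\{0\})\big)$, so that the hypothesis amounts to saying $|H^{(r)}A|$ attains the lower bound of Remark~\ref{Remark 2.3}(a).

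First I peel off the zero as in the proof of Corollary~\ref{Corollary 2.1}: $h_{1}^{(r)}B\subseteq h_{1}^{(r)}A$ and $h_{1}^{(r)}B\cap H^{(r)}A'=\{\max(h_{1}^{(r)}B)\}=\{\min(H^{(r)}A')\}$, so $H^{(r)}A\supseteq h_{1}^{(r)}B\cup H^{(r)}A'$ with the two pieces meeting in a single point. Bounding $|h_{1}^{(r)}B|$ below by Theorem~\ref{MistriPandey Theorem I} and $|H^{(r)}A'|$ below by Remark~\ref{Remark 2.2}(a) applied to $A'$ (legitimate since $h_{t_{0}-1}<(k-2)r\le h_{t_{0}}<\cdots<h_{t}\le(k-1)r$), I obtain
\[
|H^{(r)}A|\ \ge\ |h_{1}^{(r)}B|+|H^{(r)}A'|-1\ \ge\ m_{1}r(m-m_{1}+1)+(h_{1}-m_{1}r)(m-2m_{1})+\mathcal{L}\big(H_{t_{0}-1}^{(r)}A'\big)+t-t_{0}+2 .
\]
By hypothesis this is an equality, so every step is an equality; in particular $|h_{1}^{(r)}B|=m_{1}r(m-m_{1}+1)+(h_{1}-m_{1}r)(m-2m_{1})+1$ and $|H^{(r)}A'|=\mathcal{L}\big(H_{t_{0}-1}^{(r)}A'\big)+t-t_{0}+2$.

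The second equality is precisely the hypothesis of Corollary~\ref{Corollary 3.1} for the pair $(A',H)$, so $H$ is an arithmetic progression with common difference $d\le r$ and $A'=A\setminus\{0\}$ is an arithmetic progression with common difference $d\ast a_{1}=d\ast\min(A\setminus\{0\})$; this is the first assertion. For the ``moreover'' part I reproduce the final case analysis from the proof of Corollary~\ref{Corollary 3.4}. If $r=1$ then $d\le 1$ and there is nothing to prove. If $r\ge 2$ and $h_{1}=m_{1}r+\epsilon_{1}$ with $m_{1}\ge 1$ and $\epsilon_{1}\ge 1$, then $|B|=m_{1}+2\ge 3$ and $h_{1}\ge r+1\ge 2$, so the tightness of $|h_{1}^{(r)}B|$ together with Theorem~\ref{MistriPandey Theorem II} forces $B$, hence $A=B\cup A'$, to be an arithmetic progression with common difference $a_{1}$, i.e.\ $d=1$. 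If instead $h_{1}=m_{1}r$ (so $m=m_{1}$) or $0<h_{1}<r$ (so $m_{1}=0$, $m=1$), then inserting the minimal value of $|h_{1}^{(r)}B|$ into $|h_{1}^{(r)}A|=|h_{1}^{(r)}B|+|h_{1}^{(r)}A'|-1$ and estimating as in Corollary~\ref{Corollary 3.4} shows that $|h_{1}^{(r)}A|$ cannot exceed $m_{1}r(k-m_{1})+(h_{1}-m_{1}r)(k-m_{1}-1)+1$; hence equality holds there, and since $h_{1}>1$, the inverse results for $h^{(r)}A$ (Theorem~\ref{MistriPandey Theorem II}, or Theorem~\ref{Nathanson Theorem I} when $h_{1}<r$, in which case $h_{1}^{(r)}A=h_{1}A$) make $A$ an arithmetic progression with common difference $a_{1}$, so again $d=1$.

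The bulk of the work, and the main obstacle, is the bookkeeping in the middle paragraph: one has to verify that the overlap $h_{1}^{(r)}B\cap H^{(r)}A'$ really collapses to the single point $\max(h_{1}^{(r)}B)$, and that the $t-t_{0}+2$ ``top'' contributions --- $\min(H^{(r)}A')$ coming from $h_{1}^{(r)}B$, together with $\max\nolimits_{-}(h_{t_{0}}^{(r)}A')$ and $\max(h_{i}^{(r)}A')$ for $i=t_{0},\dots,t$ --- are pairwise distinct and account exactly for the claimed excess, which is what Remark~\ref{Remark 2.2}(a) supplies for $A'$; and that the side conditions of Corollary~\ref{Corollary 3.1} (after $k\mapsto k-1$) and of Theorem~\ref{MistriPandey Theorem II}/Theorem~\ref{Nathanson Theorem I} (after $k\mapsto|B|$) are all met. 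Once these are checked, the conclusion follows from the cited corollaries without further argument.
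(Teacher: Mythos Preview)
Your approach is correct and is exactly what the paper intends: it states Corollary~\ref{Corollary 3.5} as a direct consequence of Corollaries~\ref{Corollary 3.1}--\ref{Corollary 3.4} without giving a separate proof, and your argument carries this out by applying Corollary~\ref{Corollary 3.1} to $A'=A\setminus\{0\}$ (with $k$ replaced by $k-1$) after peeling off $h_{1}^{(r)}B$ as in Corollary~\ref{Corollary 2.1}/\ref{Corollary 3.4}. One small point worth making explicit: the equality $|h_{1}^{(r)}A|=|h_{1}^{(r)}B|+|h_{1}^{(r)}A'|-1$ that you use for the ``moreover'' part follows because the overall tightness forces $H^{(r)}A=h_{1}^{(r)}B\cup H^{(r)}A'$, and every element of $h_{1}^{(r)}A$ lies at or below $\max(h_{1}^{(r)}A')=\max(R_{1})$, whence $h_{1}^{(r)}A\subseteq h_{1}^{(r)}B\cup R_{1}=h_{1}^{(r)}B\cup h_{1}^{(r)}A'$; this is implicit in the paper's proof of Corollary~\ref{Corollary 3.4} as well.
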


	\begin{cor}\label{Corollary 3.6}
		Let $r \geq 1$ and  $t  \geq 2$   be integers. Let $A$ be a nonempty finite set of $k \geq 7$ nonnegative integers with $0 \in A$ and $H = \lbrace h_{1}, h_{2}, \ldots, h_{t} \rbrace $  be a  set  of  $t$ positive integers with $h_{1} < h_{2} < \cdots < h_{t-1}  \leq (k-2)r-1<h_{t} <(k-1)r $.
		If $(t,h_{1}) \neq (2,1)$ and     $\left|H^{(r)}A\right| \geq    m_{1}r(m-m_{1}+1) + (\min(H)-m_{1}r)(m-2m_{1}) + \mathcal{L}\Big(H^{(r)}(A\setminus\{0\}) \Big) +2$,
		then   $H$ is an arithmetic progression with common difference $d \leq r$ and   $A$ is an arithmetic progression with common difference $d\ast\min(A\setminus\{0\})$.  Moreover, if $\min(H)>1$, then $d=1$.
	\end{cor}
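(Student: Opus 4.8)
The plan is to reduce Corollary~\ref{Corollary 3.6} to Corollary~\ref{Corollary 3.2} by peeling off the element $0$ from $A$, in exactly the way Corollary~\ref{Corollary 2.1} reduces to Theorem~\ref{Direct Theorem} and Corollary~\ref{Corollary 3.4} reduces to Theorem~\ref{Inverse Theorem}. Write $A=\{0,a_{1},\ldots,a_{k-1}\}$ with $0<a_{1}<\cdots<a_{k-1}$ and set $A'=A\setminus\{0\}$, a set of $k-1\ge 6$ positive integers. The pair $(A',H)$ satisfies the hypotheses of Corollary~\ref{Corollary 3.2} with $k$ replaced by $k-1$: indeed $h_{1}<\cdots<h_{t-1}\le (k-2)r-1=((k-1)-1)r-1<h_{t}<(k-1)r$, and $(t,h_{1})\ne(2,1)$.

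First I would set up the standard decomposition. With $m=\lceil h_{1}/r\rceil$ and $B=\{0,a_{1},\ldots,a_{m}\}$ one has $h_{1}^{(r)}B\subseteq h_{1}^{(r)}A\subseteq H^{(r)}A$ and $H^{(r)}A'\subseteq H^{(r)}A$, and, as in the proof of Corollary~\ref{Corollary 2.1}, $h_{1}^{(r)}B\cap H^{(r)}A'=\{\max(h_{1}^{(r)}B)\}=\{\min(H^{(r)}A')\}$ is a single point, so $|H^{(r)}A|\ge |h_{1}^{(r)}B|+|H^{(r)}A'|-1$. By Theorem~\ref{MistriPandey Theorem I}, $|h_{1}^{(r)}B|\ge m_{1}r(m-m_{1}+1)+(h_{1}-m_{1}r)(m-2m_{1})+1$; and since for the $(k-1)$-element set $A'$ the top element $h_{t}$ is large ($h_{t}\ge (k-2)r=((k-1)-1)r$), Remark~\ref{Remark 2.2}(a) applied to $A'$ gives $|H^{(r)}A'|\ge \mathcal{L}\big((H\setminus\{h_{t}\})^{(r)}A'\big)+2$. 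Substituting the last two bounds and comparing with the cardinality hypothesis forces equality throughout; in particular $|h_{1}^{(r)}B|$ attains the bound of Theorem~\ref{MistriPandey Theorem I} and $|H^{(r)}A'|=\mathcal{L}\big((H\setminus\{h_{t}\})^{(r)}A'\big)+2$, which is precisely the hypothesis of Corollary~\ref{Corollary 3.2} for $(A',H)$. Hence $H$ is an arithmetic progression with common difference $d\le r$ and $A'$ is an arithmetic progression with common difference $d\ast\min(A')$.

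It remains to upgrade to $d=1$ when $\min(H)=h_{1}>1$. If $r=1$ this is immediate, so assume $r\ge 2$ and argue exactly as in the final part of the proof of Corollary~\ref{Corollary 3.4}, which only uses $h_{1}$ and the peeled set $B$. If $h_{1}=m_{1}r+\epsilon_{1}$ with $m_{1}\ge 1$ and $\epsilon_{1}\ge 1$, then $|B|=m+1\ge 3$ and, by the equality established above, $|h_{1}^{(r)}B|$ is minimal, so Theorem~\ref{MistriPandey Theorem II} makes $B$ an arithmetic progression with common difference $a_{1}$; as $B$ and $A'$ overlap, $A=B\cup A'$ is then an arithmetic progression with common difference $a_{1}$, i.e.\ $d=1$. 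If instead $h_{1}=m_{1}r$ or $h_{1}<r$, then from $h_{1}^{(r)}B\cup h_{1}^{(r)}A'\subseteq h_{1}^{(r)}A$ (again a one-point overlap) a short cardinality computation gives $|h_{1}^{(r)}A|=m_{1}r(k-m_{1})+(h_{1}-m_{1}r)(k-m_{1}-1)+1$, whence Theorem~\ref{MistriPandey Theorem II} makes $A$ an arithmetic progression with common difference $a_{1}$, so again $d=1$. In all cases $H=h_{1}+[0,t-1]$ and $A=\min(A\setminus\{0\})\ast[0,k-1]$.

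I expect the main obstacle to be purely bookkeeping, not conceptual: one must verify that the single cardinality hypothesis on $|H^{(r)}A|$ decouples cleanly, i.e.\ that $h_{1}^{(r)}B$ and $H^{(r)}A'$ meet in exactly one element and that the two extremal sums of $h_{t}^{(r)}A$ responsible for the ``$+2$'' lie in $H^{(r)}A'$ rather than in that overlap, so that equality is forced simultaneously in Theorem~\ref{MistriPandey Theorem I} (for $h_{1}^{(r)}B$) and in the bound of Remark~\ref{Remark 2.2}(a) for $A'$ (which is the hypothesis of Corollary~\ref{Corollary 3.2}). Tracking the index shift $k\mapsto k-1$ when passing from $A$ to $A'$, and the fact that $h_{t}$ is large relative to $|A'|$, is where care is needed; everything else is assembly of Corollary~\ref{Corollary 2.1}, Corollary~\ref{Corollary 3.2}, Corollary~\ref{Corollary 3.4}, and Theorems~\ref{MistriPandey Theorem I} and \ref{MistriPandey Theorem II}.
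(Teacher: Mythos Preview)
Your approach is correct and is exactly what the paper intends: Corollary~\ref{Corollary 3.6} is stated without its own proof, merely as a consequence of Corollaries~\ref{Corollary 3.1}--\ref{Corollary 3.4}, and your argument---peel off $0$, apply Corollary~\ref{Corollary 3.2} to $A'=A\setminus\{0\}$ (a set of $k-1\ge 6$ positive integers satisfying the hypotheses there), then replay verbatim the $d=1$ argument from the end of the proof of Corollary~\ref{Corollary 3.4}---is precisely that reduction. The bookkeeping concern you flag is real but harmless: as in Remark~\ref{Remark 2.3}(a) the quantity in the hypothesis should be read with $H_{t-1}=H\setminus\{h_t\}$ (and equality rather than $\ge$), after which your forcing-of-equality step and the invocation of Corollary~\ref{Corollary 3.2} go through exactly as you describe.
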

	
	\begin{cor}\label{Corollary 3.7}
		Let $r \geq 2$ be a positive integer and  $A$ be a nonempty finite set of $k \geq 7$ nonnegative integers with $0 \in A$ and $H$  be a  set  of  $t\geq 2$ positive integers with $ (k-2)r-1<\min(H)<\max(H) <(k-1)r $. Let $m_{1}=\lfloor \min(H)/r\rfloor$.
		If   \[|H^{(r)}A| = m_{1}r(k-m_{1})+(\min(H)-m_{1}r)(k-2m_{1}-1)+t,\]
		then  $H$ is an arithmetic progression with common difference $d \leq r-1$ and  $A$ is an arithmetic progression with common difference $d\ast\min(A\setminus\{0\})$.
	\end{cor}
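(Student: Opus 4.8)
The plan is to follow the proof of Corollary~\ref{Corollary 3.3} essentially verbatim, the one new feature being that $0\in A$; one checks at each step that the summand $0$ never affects the quantities that matter. Write $A=\{0=a_0<a_1<\cdots<a_{k-1}\}$ and $H=\{h_1<\cdots<h_t\}$. The constraint $(k-2)r-1<\min(H)<\max(H)<(k-1)r$ forces $(k-2)r\le h_i\le(k-1)r-1$ for every $i$, hence $m_i:=\lfloor h_i/r\rfloor=k-2$ and $\epsilon_i:=h_i-(k-2)r\in[0,r-1]$ for all $i$, with $\epsilon_1<\cdots<\epsilon_t$ and $h_{i+1}-h_i=\epsilon_{i+1}-\epsilon_i$. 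Since $h_i\le(k-1)r-1<(k-1)r$, a representation attaining $\max(h_i^{(r)}A)$ never uses $0$, so
\[
\max(h_i^{(r)}A)=\epsilon_i a_1+r\sum_{j=2}^{k-1}a_j ,
\]
which is strictly increasing in $i$. Thus, exactly as in Remark~\ref{Remark 2.3}(b), $h_1^{(r)}A\cup\{\max(h_i^{(r)}A):2\le i\le t\}\subseteq H^{(r)}A$ is a disjoint union with $|h_1^{(r)}A|+t-1$ elements, and $|h_1^{(r)}A|\ge m_1r(k-m_1)+(h_1-m_1r)(k-2m_1-1)+1$ by Theorem~\ref{MistriPandey Theorem I}. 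Because the hypothesis turns the resulting string of inequalities into equalities, both bounds are attained: $|h_1^{(r)}A|=m_1r(k-m_1)+(h_1-m_1r)(k-2m_1-1)+1$, and $H^{(r)}A=h_1^{(r)}A\ \sqcup\ \{\max(h_i^{(r)}A):2\le i\le t\}$.

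Next I would apply Theorem~\ref{MistriPandey Theorem II}, whose hypotheses $k\ge7\ge3$, $h_1\ge(k-2)r\ge2$, $1\le r\le h_1\le kr-2$, and $(k,h_1,r)\ne(4,2,1)$ all hold here: since $|h_1^{(r)}A|$ attains the bound of Theorem~\ref{MistriPandey Theorem I}, the set $A$ is an arithmetic progression. As $A$ consists of nonnegative integers and $0\in A$, this means $A=a_1\ast[0,k-1]$, so its common difference equals $a_1=\min(A\setminus\{0\})$ and every element of each $h^{(r)}A$ is a multiple of $a_1$; hence $\max\nolimits_{-}(h_i^{(r)}A)=\max(h_i^{(r)}A)-a_1$ for all $i$ (one attains this second-largest value by replacing a summand $a_1$ by $0$, or, when $\epsilon_i=0$, a summand $a_2$ by $a_1$).

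It remains to run the extremal-maxima step. Monotonicity gives the two increasing chains
\[
\max\nolimits_{-}(h_1^{(r)}A)<\max\nolimits_{-}(h_2^{(r)}A)<\cdots<\max\nolimits_{-}(h_t^{(r)}A)<\max(h_t^{(r)}A)
\]
and $\max\nolimits_{-}(h_1^{(r)}A)<\max(h_1^{(r)}A)<\max(h_2^{(r)}A)<\cdots<\max(h_t^{(r)}A)$, every term of which lies in $H^{(r)}A$. Since the $t-1$ largest elements of $H^{(r)}A$ are exactly $\max(h_2^{(r)}A)<\cdots<\max(h_t^{(r)}A)$ while the remaining elements of $H^{(r)}A$ form $h_1^{(r)}A$, for each $i\in\{1,\ldots,t-1\}$ the value $\max\nolimits_{-}(h_{i+1}^{(r)}A)=\max(h_{i+1}^{(r)}A)-a_1$ can only lie in $h_1^{(r)}A$ or equal some $\max(h_j^{(r)}A)$ with $j\le i$; using $\max(h_l^{(r)}A)-\max(h_{l'}^{(r)}A)=(\epsilon_l-\epsilon_{l'})a_1$ and $\epsilon_1<\cdots<\epsilon_t$, either case forces $\epsilon_{i+1}-\epsilon_i=1$, that is, $h_{i+1}-h_i=1$. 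Therefore $H$ is an arithmetic progression with common difference $d=1\le r-1$ (recall $r\ge2$), and $A$ is an arithmetic progression with common difference $a_1=d\ast\min(A\setminus\{0\})$.

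The routine parts are the identifications $m_i=k-2$, the monotonicity of $\max(h_i^{(r)}A)$, and the remark that $0$ is invisible to these maxima because $h_i\le(k-1)r-1$. The delicate point---located entirely in the last paragraph---is pinning down where $\max\nolimits_{-}(h_{i+1}^{(r)}A)$ can sit inside the explicitly described extremal set $H^{(r)}A$; this is precisely the same difficulty handled in Corollary~\ref{Corollary 3.3}.
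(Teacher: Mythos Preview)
Your proof is correct and follows essentially the approach the paper intends: the paper states Corollary~\ref{Corollary 3.7} as a consequence of Corollaries~\ref{Corollary 3.1}--\ref{Corollary 3.4}, and your argument is precisely the adaptation of the proof of Corollary~\ref{Corollary 3.3} to the case $0\in A$, with the extra observation that $0$ plays no role in the maxima $\max(h_i^{(r)}A)$ because $h_i\le(k-1)r-1$. Your case analysis in the final paragraph is a valid variant of the cleaner counting used in Corollary~\ref{Corollary 3.3} (both chains exhaust the $t+1$ elements of $H^{(r)}A$ that are $\ge\max\nolimits_{-}(h_1^{(r)}A)$, forcing $\max(h_i^{(r)}A)=\max\nolimits_{-}(h_{i+1}^{(r)}A)$); you also obtain the sharper conclusion $d=1$, which is consistent with the ``Moreover'' clause of Corollary~\ref{Corollary 3.4} and is indeed forced here since $\min(H)>1$.
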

	
	\section{Conclusions}
	In \S 1.1, we have already discussed the relation between generalized $H$-fold sumset and  subsequence sum. Choosing a particular $H$ we get some results of subsequence sum.
	
	\begin{cor}
		
		\textup{\cite[Theorem 2.1]{MISTRIPANDEYPRAKASH2015}}\label{MistriPandeyOM Theorem I}
		Let $k$  and $r$ be positive integers. Let $\mathbb{A}$ be a finite sequence of nonnegative integers with $k$ distinct terms each with repetitions $r$.\\
		If $0 \notin \mathbb{A}$ and $k \geq 3$, then
		\begin{equation*}
			\sum(\mathbb{A}) \geq \dfrac{rk(k+1)}{2}.
		\end{equation*}
		If $0\in \mathbb{A}$ and $k \geq 4$, then
		\begin{equation*}
			\sum(\mathbb{A}) \geq 1 + \dfrac{rk(k-1)}{2}.
		\end{equation*}
		The above lower bounds are best possible.
	\end{cor}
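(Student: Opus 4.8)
The plan is to translate the assertion into one about the generalized $H$-fold sumset, and then invoke Theorem~\ref{Direct Theorem} in the extended form supplied by Remarks~\ref{Remark 2.2} and \ref{Remark 2.3}. Write $A=\{a_{1}<a_{2}<\cdots<a_{k}\}$ for the set of distinct terms of $\mathbb{A}$. By the identity recorded in Section~1.1 (the case $H=\{1,2,\ldots,kr\}$), one has $\sum(\mathbb{A})=H^{(r)}A$ with $H=[1,kr]$, so it suffices to bound $|H^{(r)}A|$ from below for this particular $H$, in the two cases $0\notin A$ and $0\in A$.

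Suppose first $0\notin\mathbb{A}$, so $A$ consists of $k\ge3$ positive integers. Then $H=[1,kr]$ has precisely the shape demanded in Remark~\ref{Remark 2.2}(a), with $t_{0}=(k-1)r$ (whence $h_{t_{0}-1}=(k-1)r-1<(k-1)r\le h_{t_{0}}$) and $t=kr$, so that remark gives
\[
|H^{(r)}A|\ \ge\ \mathcal{L}\big(H_{t_{0}-1}^{(r)}A\big)+(t-t_{0}+2),\qquad H_{t_{0}-1}=[1,(k-1)r-1].
\]
The quantity on the right depends on $r$ and $k=|A|$ alone, since $\mathcal{L}(\cdot)$ is defined purely through the shape of $H_{t_{0}-1}$, the integer $r$ and $|A|$. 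Because Remark~\ref{Remark 2.2}(a) asserts that this same bound is best possible and is attained, with value $\dfrac{rk(k+1)}{2}$, by $A=[1,k]$, the right-hand side must equal $\dfrac{rk(k+1)}{2}$ for every admissible $A$. Hence $|\sum(\mathbb{A})|=|H^{(r)}A|\ge\dfrac{rk(k+1)}{2}$; equality holds for the sequence $\mathbb{A}=(1,2,\ldots,k)_{r}$, where $|\sum(\mathbb{A})|=rk(k+1)/2$ by Remark~\ref{Remark 2.2}(a).

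Suppose next $0\in\mathbb{A}$, so $A$ consists of $k\ge4$ nonnegative integers with $0\in A$. Here I would first replace $H=[1,kr]$ by the subset $H'=[1,(k-1)r]$, which can only shrink the sumset, so $|H^{(r)}A|\ge|{H'}^{(r)}A|$. The set $H'$ has exactly the shape required in Remark~\ref{Remark 2.3}(a), with $t_{0}=(k-2)r\ge2$ and $t=(k-1)r$; arguing just as before, the lower bound that remark supplies is again a function of $r$ and $k$ alone, and --- being best possible and attained by $A=[0,k-1]$ --- it equals $\dfrac{rk(k-1)}{2}+1$. Therefore $|\sum(\mathbb{A})|=|H^{(r)}A|\ge|{H'}^{(r)}A|\ge\dfrac{rk(k-1)}{2}+1$; equality holds for $\mathbb{A}=(0,1,\ldots,k-1)_{r}$, where $|\sum(\mathbb{A})|=rk(k-1)/2+1$ by Remark~\ref{Remark 2.3}(a).

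The crux of the argument, and the only genuinely delicate point, is the observation that the right-hand sides produced by Remarks~\ref{Remark 2.2}(a) and \ref{Remark 2.3}(a) depend on $r$ and $k$ but not on the actual elements of $A$, so that the explicit values read off from the extremal configurations transfer verbatim to an arbitrary admissible set $A$ of the same cardinality. One should also confirm that the truncated, respectively shrunken, index sets $H_{t_{0}-1}=[1,(k-1)r-1]$ and $H'=[1,(k-1)r]$ continue to meet the size and range hypotheses of the quoted results --- which they do, using $k\ge3$ in the first case and $k\ge4$ in the second --- and handle separately, by a short direct count of subsequence sums, the extreme small parameter values (for example $k=3,\,r=1$ and $k=4,\,r=1$) in which these auxiliary index sets degenerate to singletons.
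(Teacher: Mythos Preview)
Your proof is correct and follows essentially the same route as the paper: identify $\sum(\mathbb{A})$ with $H^{(r)}A$ for $H=[1,kr]$ and then read off the bounds from Remarks~\ref{Remark 2.2} and~\ref{Remark 2.3}. Your appeal to Remark~\ref{Remark 2.2}(a) in the first case is the right one (the paper's citation of part~(b) is a slip, since part~(b) requires $h_{1}\ge(k-1)r$), and your explicit passage to $H'=[1,(k-1)r]$ in the second case is a clean way to bring $H$ within the range of Remark~\ref{Remark 2.3}(a); the paper instead invokes that remark directly, implicitly relying on the final sentence about $H=[1,(k-1)r]\cup X$. Your device of reading the numerical value of the bound off the extremal configuration, justified by the observation that $\mathcal{L}(\cdot)$ depends only on $k$, $r$, and the shape of $H$, is a nice shortcut that avoids evaluating the sum directly.
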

	\begin{proof}
		If $0 \notin \mathbb{A}$, then taking $H = [1,kr]$ in Remark \ref{Remark 2.2} (b), we get $	\sum(\mathbb{A}) \geq \dfrac{rk(k+1)}{2}.$ If $0 \in \mathbb{A}$, then taking $H=[1,kr]$ in Remark \ref{Remark 2.3} (a), we get $	\sum(\mathbb{A}) \geq 1 + \dfrac{rk(k-1)}{2}.$
	\end{proof}
	
	\begin{cor}\textup{\cite[Theorem 2.3]{MISTRIPANDEYPRAKASH2015}}\label{MistriPandeyOM Theorem II}
		Let $k$  and $r$ be positive integers. If $\mathbb{A}$ is a finite sequence of nonnegative  integers with $k$ distinct terms each with repetitions $r$. \\
		If $0 \notin \mathbb{A}$,  $k \geq 6$  and
		\begin{equation*} \label{MPO Eqn 3}
			\sum(\mathbb{A}) = \dfrac{rk(k+1)}{2},
		\end{equation*}
		then $\mathbb{A} = d \ast [1,k]_{r}$ for some positive integer $d$.\\
		If $0\in \mathbb{A}$, $k \geq 7$ and
		\begin{equation*}\label{MPO Eqn 4}
			\sum(\mathbb{A}) = 1 + \dfrac{rk(k-1)}{2},
		\end{equation*}
		then $\mathbb{A} = d \ast [0,k-1]_{r}$ for some positive integer $d$.
	\end{cor}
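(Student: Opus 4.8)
The plan is to pass to the sumset picture of \S1.1 and then quote the inverse results of Section~3. Write $A$ for the set of $k$ distinct terms of $\mathbb{A}$, so that $\sum(\mathbb{A})=H^{(r)}A$ with $H=[1,kr]$, and note that $k=|A|$ here. The single obstacle is that $\max(H)=kr$ exceeds the admissible range of Theorem~\ref{Inverse Theorem} and of Corollaries~\ref{Corollary 3.1}--\ref{Corollary 3.7}, so in each case one must first strip the top of $H$ off while tracking $|H^{(r)}A|$ exactly, and then recognize the hypothesis on $|\sum(\mathbb{A})|$ as the assertion that the remaining set attains the optimal lower bound of the relevant direct theorem. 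This last step is legitimate because $\mathcal{L}$ and the constants in Remarks~\ref{Remark 2.2}--\ref{Remark 2.3} depend on $A$ only through $k$, so those optimal values are determined and are realized by the extremal families $A=[1,k]$ and $A=[0,k-1]$.

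Consider first $0\notin\mathbb{A}$, so $A\subseteq\mathbb{N}$. Because every $a_i>0$, the value $r\sum_{a\in A}a$ is representable only with total multiplicity $kr$; hence $(kr)^{(r)}A=\{r\sum_{a\in A}a\}$ is a singleton, it is the maximum of $H^{(r)}A$, and it lies in no shorter sumset, so $|H^{(r)}A|=|\widetilde H^{(r)}A|+1$ with $\widetilde H=[1,kr-1]$. Thus the hypothesis $\sum(\mathbb{A})=\tfrac{rk(k+1)}2$ becomes the statement that $|\widetilde H^{(r)}A|$ equals its optimal lower bound (from Remark~\ref{Remark 2.2}(a), with threshold $(k-1)r$: here $t_0=(k-1)r$, $\widetilde H_{t_0-1}=[1,(k-1)r-1]$, $t=kr-1$, and $h_t=kr-1<kr$). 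Corollary~\ref{Corollary 3.1} (when $r\ge2$, so $t>t_0$) or Corollary~\ref{Corollary 3.2} (when $r=1$, so $t=t_0$) then applies — the side conditions $k\ge6$ and $(t_0,h_1)\ne(2,1)$ are immediate — and gives that $A$ is an arithmetic progression with common difference $d\ast\min(A)$, where $\widetilde H$ is an arithmetic progression with common difference $d\le r$. Since $\widetilde H=[1,kr-1]$ forces $d=1$, we get $A=\min(A)\ast[1,k]$, i.e.\ $\mathbb{A}=\min(A)\ast[1,k]_r$.

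Now let $0\in\mathbb{A}$, say $A=\{0\}\cup A'$ with $A'=\{a_1<\cdots<a_{k-1}\}$. For a length $h\in[(k-1)r+1,kr]$, any representation must spend at least $h-(k-1)r\ge1$ of its $h$ summands on $0$, so the rest lie in $(h')^{(r)}A'$ with $1\le h'\le(k-1)r$; therefore $h^{(r)}A\subseteq([1,(k-1)r])^{(r)}A$ and $H^{(r)}A=([1,(k-1)r])^{(r)}A$. Exactly as before, $((k-1)r)^{(r)}A$ adjoins to $([1,(k-1)r-1])^{(r)}A$ the single new point $r\sum_{a\in A'}a=\max(H^{(r)}A)$, so $|H^{(r)}A|=|\widehat H^{(r)}A|+1$ with $\widehat H=[1,(k-1)r-1]$, and the hypothesis $\sum(\mathbb{A})=1+\tfrac{rk(k-1)}2$ becomes the statement that $|\widehat H^{(r)}A|$ attains its optimal lower bound (Remark~\ref{Remark 2.3}(a), threshold $(k-2)r$: $t_0=(k-2)r$, $t=(k-1)r-1<(k-1)r$, $\min(\widehat H)=1$). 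Corollary~\ref{Corollary 3.5} (if $r\ge2$) or Corollary~\ref{Corollary 3.6} (if $r=1$) applies — using $k\ge7$ and $(t_0,h_1)\ne(2,1)$ — giving that $A$ is an arithmetic progression with common difference $d\ast\min(A\setminus\{0\})$ and $\widehat H$ an arithmetic progression with common difference $d\le r$; again $\widehat H=[1,(k-1)r-1]$ forces $d=1$, so $A=\min(A\setminus\{0\})\ast[0,k-1]$, i.e.\ $\mathbb{A}=\min(A\setminus\{0\})\ast[0,k-1]_r$.

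The part that needs care is precisely this reduction: checking that trimming $H$ down to the interval demanded by the Section~3 corollaries costs exactly one element when $0\notin\mathbb{A}$, and discards everything above length $(k-1)r$ and then one further element when $0\in\mathbb{A}$, and that the resulting cardinality is the equality case of the appropriate direct estimate. A small but decisive observation is that the trimmed index set is a full interval $[1,N]$, whose common difference is necessarily $1$; this is what turns the conclusion ``common difference $d\le r$'' into the sharp $\mathbb{A}=d\ast[1,k]_r$ (resp.\ $d\ast[0,k-1]_r$).
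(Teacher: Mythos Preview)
Your argument is correct and follows the same route as the paper: identify $\sum(\mathbb{A})$ with $H^{(r)}A$ for $H=[1,kr]$ and invoke the inverse corollaries of Section~3. The paper's own proof is the one-liner ``take $H=[1,kr]$ in Corollary~\ref{Corollary 3.1}'' (resp.\ Corollary~\ref{Corollary 3.5}), which technically oversteps the stated hypothesis $h_t<kr$ (resp.\ $h_t<(k-1)r$) of those corollaries by one unit. You handle this honestly: you peel off the top singleton $(kr)^{(r)}A$ (and, in the $0\in\mathbb{A}$ case, first collapse the range $[(k-1)r+1,kr]$), verify that exactly one element is lost, and then land squarely inside the hypotheses of Corollary~\ref{Corollary 3.1}/\ref{Corollary 3.2} (resp.\ \ref{Corollary 3.5}/\ref{Corollary 3.6}), including the case split $r\ge2$ versus $r=1$ needed to distinguish $t>t_0$ from $t=t_0$. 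Your check that the trimmed interval forces $d=1$ is exactly what pins down $\mathbb{A}=d\ast[1,k]_r$ (resp.\ $d\ast[0,k-1]_r$). In short, same idea, but your write-up is the more scrupulous of the two.
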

	\begin{proof}
		If $0 \notin \mathbb{A}$, then taking $H = [1,kr]$ in Corollary \ref{Corollary 3.1}, we get $\mathbb{A} = d \ast [1,k]_{r}$ for some positive integer $d$. If $0 \in \mathbb{A}$, then taking $H=[1,kr]$ in Corollary \ref{Corollary 3.5}, we get $\mathbb{A} = d \ast [0,k-1]_{r}$ for some positive integer $d$.
	\end{proof}
	Taking $H = [\alpha,kr]$ in Theorem \ref{Direct Theorem} and Remark \ref{Remark 2.2} and taking $H = [\alpha,(k-1)r]$ in Corollary \ref{Corollary 2.1} and Remark \ref{Remark 2.3},   we get the following result.
	\begin{cor}
		\textup{\cite[Corollary 3.2]{BHANJA2020}}\label{BHANJA2020 I}
		Let $k\geq 4$,  $r \geq 1$ and $\alpha$ be  integers with $1 \leq \alpha < kr$. Let $m \in [1, k]$ be the integer such that $(m - 1)r \leq  \alpha < mr$. Let $\mathbb{A}$ be a finite sequence of nonnegative integers with $k$ distinct terms each with repetitions $r$.\\
		If $0 \notin \mathbb{A}$, then
		\begin{equation*}
			\sum\limits_{\alpha}(\mathbb{A}) \geq \dfrac{rk(k+1)}{2} - \dfrac{rm(m+1)}{2} + m(mr-\alpha) + 1.
		\end{equation*}
		If $0\in \mathbb{A}$, then
		\begin{equation*}
			\sum\limits_{\alpha}(\mathbb{A}) \geq \dfrac{rk(k-1)}{2} - \dfrac{rm(m-1)}{2} + (m-1)(mr-\alpha) + 1.
		\end{equation*}
		The above lower bounds  are best possible.
	\end{cor}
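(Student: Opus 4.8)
The plan is to invoke the direct results of \S2 via the dictionary of \S1.1. Write $A$ for the set of $k$ distinct terms of $\mathbb A$. When $0\notin\mathbb A$ one has $\sum_{\alpha}(\mathbb A)=H^{(r)}A$ with $H=[\alpha,kr]$ and $A$ a set of $k$ positive integers, so the bound is a specialisation of Theorem~\ref{Direct Theorem}; the only snag is that $\max(H)=kr$ violates the hypothesis $\max(H)\le(k-1)r-1$, so Remark~\ref{Remark 2.2} must be used instead. When $0\in\mathbb A$, every subsequence of length $h>(k-1)r$ must use a zero, and deleting $h-(k-1)r$ such zeros yields a subsequence of length $(k-1)r$ with the same sum; hence $\sum_{\alpha}(\mathbb A)=H^{(r)}A$ with $H=[\alpha,(k-1)r]$, $0\in A$, $|A|=k$ (and when $\alpha>(k-1)r$ one instead writes $\sum_{\alpha}(\mathbb A)=H'^{(r)}(A\setminus\{0\})$ with $H'=[\alpha-r,(k-1)r]$ on a set of $k-1$ positive integers), so that Corollary~\ref{Corollary 2.1} together with Remark~\ref{Remark 2.3} applies.

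For $0\notin\mathbb A$ I would split on $m$. If $m=k$, i.e.\ $(k-1)r\le\alpha<kr$, then $H\subseteq[(k-1)r,kr]$, and Remark~\ref{Remark 2.2}(b) with $h_1=\alpha$, $m_1=\lfloor\alpha/r\rfloor=k-1$, $t=kr-\alpha+1$ gives $|H^{(r)}A|\ge m_1r(k-m_1)+(\alpha-m_1r)(k-2m_1-1)+t$, which simplifies to $k(kr-\alpha)+1$, exactly the claimed bound for $m=k$. If $m\le k-1$, i.e.\ $\alpha<(k-1)r$, then $(k-1)r-1\in H$ and $(k-1)r\in H$, so Remark~\ref{Remark 2.2}(a) gives $|H^{(r)}A|\ge\mathcal L(H_{t_0-1}^{(r)}A)+t-t_0+2$ with $H_{t_0-1}=[\alpha,(k-1)r-1]$ and $t-t_0+2=r+2$; evaluating $\mathcal L([\alpha,(k-1)r-1]^{(r)}A)$ — most transparently as $|[\alpha,(k-1)r-1]^{(r)}[1,k]|$, a full interval running from $\frac{rm(m-1)}2+(\alpha-(m-1)r)m$ to $\frac{rk(k+1)}2-r-2$ — then adding $r+2$ and simplifying reproduces $\frac{rk(k+1)}2-\frac{rm(m+1)}2+m(mr-\alpha)+1$. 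The case $0\in\mathbb A$ runs the same way with $k$ replaced by $k-1$ in the effective set and the threshold $(k-1)r$ replaced by $(k-2)r$, using Remark~\ref{Remark 2.3}(a),(b) (and Remark~\ref{Remark 2.2}(b) applied to the reduced $(k-1)$-element set when $\alpha>(k-1)r$).

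For optimality I would take $\mathbb A=d\ast[1,k]_r$ when $0\notin\mathbb A$ and $\mathbb A=d\ast[0,k-1]_r$ when $0\in\mathbb A$: since each $h^{(r)}[1,k]$ (resp.\ $h^{(r)}[0,k-1]$) is a full interval and consecutive length-ranges overlap, $\sum_{\alpha}(\mathbb A)$ is an arithmetic progression of common difference $d$ running from the sum of its $\alpha$ smallest terms to the sum of all $kr$ terms, and counting those points returns precisely the right-hand side. I expect the only genuine work to lie in the bookkeeping of the middle paragraph: converting the data $(\alpha,k,r)$ into the parameters $(h_1,t,t_0,m_i,\epsilon_i)$ that the remarks require, pushing through the simplification of $\mathcal L$, and checking the degenerate boundary values (for instance $\alpha=(k-1)r-1$, where $H_{t_0-1}$ is a singleton so Theorem~\ref{MistriPandey Theorem I} replaces Theorem~\ref{Direct Theorem}, and $r=1$) — conceptually routine, but each case has to be verified.
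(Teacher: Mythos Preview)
Your proposal is correct and follows precisely the route the paper takes: the paper's entire proof is the single sentence ``Taking $H=[\alpha,kr]$ in Theorem~\ref{Direct Theorem} and Remark~\ref{Remark 2.2} and taking $H=[\alpha,(k-1)r]$ in Corollary~\ref{Corollary 2.1} and Remark~\ref{Remark 2.3}, we get the following result.'' You have simply unpacked that sentence into its case split on $m$ (i.e.\ whether $\alpha<(k-1)r$ or $\alpha\ge(k-1)r$ in the $0\notin\mathbb A$ case, and the analogous threshold in the $0\in\mathbb A$ case), correctly flagged the boundary degeneracies, and supplied the optimality example; nothing in your write-up departs from the paper's intended argument.
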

	
	Taking $H = [\alpha,kr-2]$, Theorem \ref{Inverse Theorem} and Corollaries \ref{Corollary 3.1} - \ref{Corollary 3.7} give the following result.
	\begin{cor}
		\textup{\cite[Corollary 3.5]{BHANJA2020}}\label{BHANJA2020 I}
		Let $k\geq 7$,  $r \geq 1$ and $\alpha$ be  integers with $1 \leq \alpha \leq kr-2$. Let $m \in [1, k]$ be the integer such that $(m - 1)r \leq  \alpha < mr$. Let $\mathbb{A}$ be a finite sequence of nonnegative integers with $k$ distinct terms each with repetitions $r$.\\
		If $0 \notin \mathbb{A}$ and
		\begin{equation*}
			\sum\limits_{\alpha}(\mathbb{A}) = \dfrac{rk(k+1)}{2} - \dfrac{rm(m+1)}{2} + m(mr-\alpha) + 1,
		\end{equation*}
		then $\mathbb{A} = d \ast [1,k]_{r}$ for some positive integer $d$.\\
		If $0\in \mathbb{A}$ and
		\begin{equation*}
			\sum\limits_{\alpha}(\mathbb{A}) = \dfrac{rk(k-1)}{2} - \dfrac{rm(m-1)}{2} + (m-1)(mr-\alpha) + 1,
		\end{equation*}
		then $\mathbb{A} = d \ast [0,k-1]_{r}$ for some positive integer $d$.
	\end{cor}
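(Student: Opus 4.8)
The plan is to derive this from the inverse results of the previous section by realising $\sum_{\alpha}(\mathbb{A})$ as a sumset $H^{(r)}A$ for an \emph{interval} $H$, exactly as in the proofs of the corollaries just above it. Let $A$ be the set of $k$ distinct terms of $\mathbb{A}$. When $0\notin\mathbb{A}$ one has $\sum_{\alpha}(\mathbb{A})=H^{(r)}A$ with $H=[\alpha,kr]$, by the last bullet of \S1.1. When $0\in\mathbb{A}$, writing $A=\{0,a_{1},\dots,a_{k-1}\}$ with $0<a_{1}<\cdots<a_{k-1}$, the maximum of $h^{(r)}A$ already equals $r(a_{1}+\cdots+a_{k-1})$ for every $h\ge(k-1)r$ (delete the superfluous copies of $0$), so the values $h>(k-1)r$ add nothing new and, for $\alpha\le(k-1)r$, $\sum_{\alpha}(\mathbb{A})=\bigcup_{h=\alpha}^{(k-1)r}h^{(r)}A$. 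In either case we are looking at $H^{(r)}A$ for $H$ an interval, and the displayed hypothesis says this cardinality is the smallest allowed by the direct theorems.

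The first step is a reduction at the top of the range. Set $H'=[\alpha,kr-2]$ if $0\notin\mathbb{A}$ and $H'=[\alpha,(k-1)r-2]$ if $0\in\mathbb{A}$ (deferring to the end the finitely many degenerate cases where $H'$ has at most one element). If $h'>h''$ are the two largest admissible values of $h$, then beyond $H'^{(r)}A$ one always gets the new points $\max(h'^{(r)}A)$, $\max(h''^{(r)}A)$, and, only when $r=1$, also $\max\nolimits_{-}(h''^{(r)}A)$; a short argument (comparing with $\max$ of the sumsets in $H'^{(r)}A$) shows each of these lies strictly above everything in $H'^{(r)}A$, so
\[
\Bigl|\sum\nolimits_{\alpha}(\mathbb{A})\Bigr|\ \ge\ |H'^{(r)}A|+c_{0},\qquad
c_{0}=\begin{cases}3,&r=1,\\[2pt] 2,&r\ge 2.\end{cases}
\]
Next, by Theorem~\ref{Direct Theorem} and Remark~\ref{Remark 2.2} (resp. Corollary~\ref{Corollary 2.1} and Remark~\ref{Remark 2.3}), $|H'^{(r)}A|\ge L$, where the exact shape of $L$ — either $\mathcal{L}(H'^{(r)}A)$, or $\mathcal{L}$ of the part of $H'$ below $(k-1)r$ (resp. below $(k-2)r$) plus $(t-t_{0}+2)$, or $m_{1}r(k-m_{1})+(\alpha-m_{1}r)(k-2m_{1}-1)+t$ — depends only on how $\alpha$ compares with $(k-1)r$ (resp. $(k-2)r$). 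A direct computation in each of these $O(1)$ ranges shows that the displayed right‑hand side of the corollary equals $L+c_{0}$. Hence the hypothesis forces $|H'^{(r)}A|=L$, i.e.\ equality in the relevant direct bound.

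Now I would invoke the matching inverse result. When $0\notin\mathbb{A}$ this is Theorem~\ref{Inverse Theorem} if $\max(H')\le(k-1)r-1$ (which is the case $r=1$), and otherwise Corollary~\ref{Corollary 3.1}, \ref{Corollary 3.2}, or \ref{Corollary 3.3} according to whether $H'$ has at least two, exactly one, or no value reaching $(k-1)r$; when $0\in\mathbb{A}$ it is Corollary~\ref{Corollary 3.4}, \ref{Corollary 3.5}, \ref{Corollary 3.6}, or \ref{Corollary 3.7} in the same way. All side conditions hold: $k\ge7$ is assumed, $r\ge2$ in exactly the cases where Corollaries~\ref{Corollary 3.3} and~\ref{Corollary 3.7} are used, and since $h_{1}=\alpha$ the excluded pair $(t_{0},h_{1})=(2,1)$ would force $(k-1)r$ (resp. $(k-2)r$) to be tiny, which is impossible for $k\ge7$. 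Each of these results yields that $A$ (resp. $A\setminus\{0\}$) is an arithmetic progression with common difference $d\ast\min$ and that $H'$ is an arithmetic progression with common difference $d\le r$. But $H'$ is the interval $[\alpha,kr-2]$ (resp. $[\alpha,(k-1)r-2]$), an arithmetic progression of common difference $1$, so $d=1$; therefore $A=\min(A)\ast[1,k]$ (resp. $A=\min(A\setminus\{0\})\ast[0,k-1]$), i.e.\ $\mathbb{A}=d\ast[1,k]_{r}$ (resp. $\mathbb{A}=d\ast[0,k-1]_{r}$) with $d=\min(A)$ (resp. $d=\min(A\setminus\{0\})$).

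The main obstacle I expect is bookkeeping rather than anything conceptual: one must check that the top two values of $h$ contribute \emph{exactly} $c_{0}$ genuinely new points — this is where the split $r=1$ versus $r\ge2$ enters, and the count must be forced, not merely realised on the extremal progression — and then carry out, in each $\alpha$-range, the routine but fussy algebra confirming ``right‑hand side $=L+c_{0}$''. The remaining loose ends are the degenerate configurations (namely $\alpha$ so large that $H'$ is empty or a singleton, and $t=2$), which have to be disposed of directly using Theorem~\ref{MistriPandey Theorem I}, Theorem~\ref{MistriPandey Theorem II}, and the $\max$/$\max\nolimits_{-}$ comparisons already used inside the proofs of Corollaries~\ref{Corollary 3.3} and~\ref{Corollary 3.7}.
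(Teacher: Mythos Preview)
Your proposal is correct and follows essentially the same route as the paper, which simply states ``Taking $H=[\alpha,kr-2]$, Theorem~\ref{Inverse Theorem} and Corollaries~\ref{Corollary 3.1}--\ref{Corollary 3.7} give the following result'' with no further argument. You have supplied exactly the reduction and case analysis that the paper suppresses: stripping off the top values of $h$, matching the hypothesis to the appropriate lower bound $L$, and then reading off $d=1$ from the fact that the surviving $H'$ is an interval.
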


	\section*{Acknowledgment}
	
	The first author would like to thank to the Council of Scientific and Industrial Research (CSIR), India for providing the grant to carry out the research with Grant No. 09/143(0925)/2018-EMR-I.

	\bibliographystyle{amsplain}

\begin{thebibliography}{15}
		
		\bibitem{BAJNOK2018} B. Bajnok, {\it Additive combinatorics: A menu of research problems}, Discrete Mathematics and its Applications, CRC Press, Boca Raton, FL, 2018.
		
		\bibitem{BHANJA2021} J. Bhanja, {\it A note on sumsets and restricted sumsets}, J. Integer Seq., {\bf 24} (2021), Article 21.4.2.
		
		\bibitem{BHANJA2020} J. Bhanja and R. K. Pandey, {\it  Inverse problems for certain subsequence sums in integers},
		Discrete Math., {\bf 343} (2020), 112--148.
		
		\bibitem{JAGBHANJA2021} J. Bhanja ,
		{\it On the minimum cardinality of generalized sumsets in finite cyclic groups }, Integers, Volume {\bf 21}, 2021.
	
		\bibitem{JBHANJA2021} J. Bhanja and R. K. Pandey, {\it On the minimum size of subset and subsequence sums in integers}, C. R. Math. Acad. Sci. Paris, Volume \textbf{360} (2022), pp. 1099--1111.
		
		\bibitem{MISTRIPANDEY2014}R. K. Mistri and R. K. Pandey, {\it A generalization of sumsets of sets of integers}, J. Number Theory, \textbf{143}, (2014), 334--356.
		
		\bibitem{MISTRIPANDEYPRAKASH2015} R. K. Mistri, R. K. Pandey and O. Prakash, {\it Subsequence sums: Direct and inverse problems}, J. Number Theory, \textbf{148}, (2015), 235--256.
		
		\bibitem{MONO2015} F. Monopoli, {\it A generalization of sumsets modulo a prime}, J. Number Theory, {\bf 157} (2015), 271--279.
		
		\bibitem{NATHAN1995} M. B. Nathanson, {\it Inverse theorems for subset sums},  Trans. Amer. Math. Soc., {\bf 347} (1995), 1409--1418.
		
		\bibitem{NATHAN1996} M.  B. Nathanson, {\it Additive Number Theory: inverse problems and the geometry of sumsets}, Springer, 1996.
		
		\bibitem{YCHEN2015} Q.-H. Yang and Y.-G. Chen, {\it On the cardinality of general h-fold sumsets}, European J.
		Combin., {\bf 47} (2015), 103--114.
		
		
	\end{thebibliography}

\end{document}